%\NeedsTeXFormat{LaTeX2e} % LaTeX 2.09 can't be used (nor non-LaTeX)

\documentclass{amsart}
\usepackage{amsmath,amsthm}
\usepackage{amsfonts,amssymb}

\hfuzz1pc % Don't bother to report overfull boxes if overage is < 1pc

%\usepackage[notref,notcite]{showkeys}

%%%%%%%%%%%%%%Theorem environments%%%%%%%%%%%%%%%%%%%%

%% \theoremstyle{plain} %% This is the default
\newtheorem{thm}{Theorem}[section]
\newtheorem{cor}[thm]{Corollary}
\newtheorem{lem}[thm]{Lemma}
\newtheorem{prop}[thm]{Proposition}

\theoremstyle{remark}

%%%%%%%%%%%%%%%%% Math definitions %%%%%%%%%%%%%%%%%%

 \def\t{{\theta}}
 \def\l{{\lambda}}
 \def\d{{\delta}}
  \def\eps{{\varepsilon}}

 \def\CG{{\mathcal G}}

 \def\la{{\langle}}
 \def\ra{{\rangle}}
 \def\ve{{\varepsilon}}

 \def\jb{{\mathbf j}}
 \def\kb{{\mathbf k}}

 \def\tb{{\mathbf t}}

 \def\CS{{\mathcal S}}
 
 \def\CI{{\mathcal I}}
 
  \def\CK{{\mathcal K}}
 \def\CL{{\mathcal L}}
 \def\CO{{\mathcal O}}
 \def\CP{{\mathcal P}}
 
 \def\CT{{\mathcal T}}

 \def\B{{\mathbb B}}
  
  \def\HH{{\mathbb H}}
    \def\GG{{\mathbb G}}
  
 \def\NN{{\mathbb N}}

 \def\RR{{\mathbb R}}
 
 \def\ZZ{{\mathbb Z}}

 \newcommand{\e}{\mathrm{e}}
 \newcommand{\tr}{{\mathsf {tr}}}

\newcommand{\wt}{\widetilde}

 \usepackage{graphicx}
 \usepackage{ifpdf}
  \ifpdf
 \DeclareGraphicsExtensions{.pdf,.png,.jpg}
 \else
 \DeclareGraphicsExtensions{.eps}
 \fi

\begin{document}

\title[]
{Cubature formula and interpolation on the cubic domain}

\author{Huiyuan Li}
\address{Institute of Software\\
Chinese Academy of Sciences\\ Beijing 100080,China}
\email{hynli@mail.rdcps.ac.cn}
\author{Jiachang Sun}
\address{Institute of Software\\
Chinese Academy of Sciences\\ Beijing 100080,China}
\email{sun@mail.rdcps.ac.cn}
\author{Yuan Xu}
\address{  Department of Mathematics\\ University of Oregon\\
    Eugene, Oregon 97403-1222.}
\email{yuan@math.uoregon.edu}

\date{\today}
\keywords{lattice, cubature, interpolation, discrete Fourier series}
\subjclass{41A05, 41A10}
\thanks{The first and the second authors were supported by NSFC Grant
10431050 and 60573023. The second author was supported by National Basic
Research Program grant 2005CB321702. The third author was supported by
NSF Grant DMS-0604056}

\begin{abstract}
Several cubature formulas on the cubic domains are derived using the
discrete Fourier analysis associated with lattice tiling, as developed in
\cite{LSX}. The main results consist of a new derivation of the Gaussian type
cubature for the product Chebyshev weight functions and associated
interpolation polynomials on $[-1,1]^2$, as well as new results on $[-1,1]^3$.
In particular, compact formulas for the fundamental interpolation polynomials
are derived, based on $n^3/4 +\CO(n^2)$ nodes of a cubature formula
on $[-1,1]^3$.
\end{abstract}

\maketitle

\section{Introduction}
\setcounter{equation}{0}

For a given weight function $W$ supported on a set $\Omega \in \RR^d$, a
cubature formula of degree $2n-1$ is a finite sum, $L_n f$,  that provides an
approximation to the integral and preserves polynomials of degree up to $2n-1$;
that is,
$$
 \int_{\Omega} f(x) W(x) dx = \sum_{k=1}^N \lambda_k f(x_k)=: L_n f  \qquad
      \hbox{for all $f \in \Pi_{2n-1}^d$},
$$
where $\Pi_M^d$ denotes the space of polynomials of total degree at most
$n$ in $d$ variables. The points $x_k \in \RR^d$ are called {\it nodes} and
the numbers $\lambda_k \in \RR \setminus \{0\}$ are called {\it weights} of
the cubature.

Our primary interests are Gaussian type cubature,  which has minimal or
nearer minimal number of nodes. For $d =1$, it is well known that Gaussian
quadrature of degree $2n-1$ needs merely $N =n$ nodes and these nodes
are precisely the zeros of the orthogonal polynomial of degree $n$ with
respect to $W$. The situation for $d \ge 1$, however, is much more complicated
and not well understood in general.  As in the case of $d =1$, it is known that a
cubature of degree $2n-1$ needs at least $N \ge \dim \Pi_{n-1}^d$ number of
nodes, but few formulas are known to attain this lower bound (see, for
example, \cite{BSX, LSX}). In fact, for the centrally symmetric weight function
(symmetric with respect to the origin), it is known that the number of nodes,
$N$, of a cubature of degree $2n-1$ in two dimension satisfies the lower bound
\begin{equation} \label{Moller}
   N \ge \dim \Pi_{n-1}^2 + \left \lfloor \frac{n}2 \right \rfloor,
\end{equation}
known as M\"oller's lower bound \cite{Mo1}.   It is also known that the nodes
of a cubature that attains the lower bound \eqref{Moller}, if it exists, are
necessarily the common zeros
of $n+1 - \lfloor \frac{n}2 \rfloor$ orthogonal polynomials of degree $n$ with
respect to $W$. Similar statements on the nodes hold for cubature formulas
that have number of nodes slightly above M\"oller's lower bound, which we shall
call cubature of {\it Gaussian type}.  These definitions also hold in $d$-dimension,
where the lower bound for the number of nodes for the centrally symmetric
weight function is given in \cite{Mo2}.

There are, however, only a few examples of such formulas that are explicitly
constructed and fewer still  can be useful for practical computation. The
best known example is $\Omega = [-1,1]^d$ with the weight function
\begin{equation}\label{W0W1}
  W_0(x):= \prod_{i=1}^d  \frac{1}{\sqrt{1-x_i^2}} \qquad \hbox{or}\qquad
      W_1(x):= \prod_{i=1}^d  \sqrt{1-x_i^2}
\end{equation}
and only when $d =2$. In this case, several families of Gaussian type cubature
are explicitly known, they were constructed (\cite{MP, X94}) by studying the
common zeros of corresponding orthogonal polynomials, which are product
Chebyshev polynomials of the first kind and the second kind, respectively.
Furthermore, interpolation polynomials bases on the nodes of these cubature
formulas turn out to possess several desirable features (\cite{X95}, and
also \cite{Xpt3}). On the other hand,  studying common zeros of orthogonal
polynomials of several variables is in general notoriously difficult. In the case
of \eqref{W0W1}, the product Chebyshev polynomials have the simplest
structure among all orthogonal polynomials, which permits us to study their
common zeros and construct cubature formulas in the case $d=2$, but not
yet for the case $d=3$ or higher.

The purpose of the present paper is to provide a completely different method
for constructing cubature formulas with respect to $W_0$ and $W_1$. It uses
the discrete Fourier analysis associated with lattice tiling, developed recently
in \cite{LSX}. This method has been used in \cite{LSX} to establish cubature
for {\it trigonometric functions} on the regular hexagon and triangle in $\RR^2$,
a topic that has been studied in \cite{Sun, LS}, and on the rhombic dodecahedron
and tetrahedron of $\RR^3$ in \cite{LX}. The cubature on the hexagon can be
transformed, by symmetry, to a cubature on the equilateral triangle that
generates the hexagon by reflection, which can in turn be further transformed,
by a nontrivial change of variables, to Gaussian cubature formula for algebraic
polynomials on the domain bounded by Steiner's hypercycloid. The theory
developed in \cite{LSX} uses two lattices, one determines the domain of
integral and the points that defined the discrete inner product, the other
determines the space of exponentials or trigonometric functions that are
integrated exactly by the cubature. In \cite{LSX,LX} the two lattices are taken
to be the same. In this paper we shall choose one as $\ZZ^d$ itself, so that
the integral domain is fixed as the cube, while we choose the other one
differently. In $d =2$, we choose the second lattice so that its spectral set
is a rhombus, which allows us to establish cubature formulas for trigonometric
functions that are equivalent to Gaussian type cubature formulas for $W_0$
and $W_1$. In the case of $d =3$, we choose the rhombic dodecahedron as
a tiling set and obtain a cubature of degree $2n-1$ that uses $n^3/4 + \CO(n^2)$
nodes, worse than the expected lower bound of $n^3/6+ \CO(n^2)$ but far
better than the product Gaussian cubature of $n^3$ nodes. This cubature
with $n^3/4 + \CO(n^2)$ nodes has appeared recently and tested numerically
in \cite{MVX}. We will further study the Lagrange interpolation based on its
nodes, for which the first task is to identify the subspace that the interpolation
polynomials belongs. We will not only identify the interpolation space, but
also give the compact formulas for the fundamental interpolation polynomials.

One immediate question arising from this study is if there exist cubature
formulas of degree $2n-1$ with $n^3/6+\CO(n^2)$ nodes on the cube.
Although examples of cubature formulas of degree $2n-1$ with $N =
\dim \Pi_{n-1}^d = n^d/d! + \CO(n^{d-1})$ nodes are known to exist for
special non-centrally symmetric regions (\cite{BSX}), we are not aware of
any examples for symmetric domains that use $N= n^d/d! + \CO(n^{d-1})$
nodes. From our approach of tiling and discrete Fourier analysis, it appears
that the rhombic dodecahedron gives the smallest number of nodes among
all other fundamental domains that tile $\RR^3$ by translation. Giving the
fact that this approach yields the cubature formulas with optimal order
for the number of nodes, it is tempting to make the conjecture that a cubature
formula  of degree $2n-1$ on $[-1,1]^3$ needs at least $n^3/4 + \CO(n^2)$
nodes.

The paper is organized as follows. In the following section we recall the
result on discrete Fourier analysis and lattice tiling in \cite{LSX}. Cubature
and interpolation for $d =2$ are developed in Section 3 and those for $d=3$
are discussed in Section 4, both the latter two sections are divided into
several subsections.

\section{Discrete Fourier Analysis with lattice Tiling}
\setcounter{equation}{0}

We recall basic results in \cite{LSX} on the discrete Fourier analysis associated
with a lattice. A lattice of $\RR^d$ is a discrete subgroup that can be written
as $A \ZZ^d =\{Ak: k \in \ZZ^d\}$, where $A$ is a $d\times d$ invertible matrix,
called the generator of the lattice.  A bounded set $\Omega_A \subset \RR^d$
is said to tile $\RR^d$ with the lattice $A\ZZ^d$ if
$$
   \sum_{k \in \ZZ^d} \chi_{\Omega_A} (x+ A k) =1 \qquad \hbox{for almost
         all $x \in \RR^d$},
$$
where $\chi_{_E}$ denotes the characteristic function of the set $E$. %We
%write the above equation as $\Omega_A + A \ZZ^d = \RR^d$ in short.
The simplest lattice is $\ZZ^d$ itself, for which the set that tiles $\RR^d$ is
$$
        \Omega := [-\tfrac12, \tfrac12)^d.
$$
We reserve the notation $\Omega$ as above throughout the rest of this paper.
The set $\Omega$ is chosen as half open so that its translations by $\ZZ^d$ tile
$\RR^d$ without overlapping. It is well known that the exponential functions
$$
   \e_k(x) := \e^{2\pi i k \cdot x}, \qquad k \in \ZZ^d, \quad x \in \RR^d,
$$
form an orthonormal basis for $L^2(\Omega)$. These functions are periodic
with respect to $\ZZ^d$; that is, they satisfy
$$
    f(x + k) = f (x) \qquad \hbox{for all $k\in \ZZ^d$}.
$$
Let $B$ be a $d\times d$ matrix such that all entries of $B$ are integers.
Denote
\begin{equation}\label{LambdaB}
        \Lambda_{B} = \left \{k \in \ZZ^d: B^{-\tr} k \in  \Omega \right\}
                                                  \quad \hbox{and} \quad
        \Lambda_B^\dag = \left \{k \in \ZZ^d: k \in \Omega_B \right\}.
\end{equation}
It is known that $|\Lambda_{B}| = |\Lambda^\dag_B|= |\det B|$, where
$|E|$ denotes the cardinality of the set $E$.  We need the following
theorem \cite[Theorem 2.5]{LSX}.

\begin{thm} \label{thm:2.1}
Let $B$ be a $d \times d$ matrix with integer entries. Define the discrete inner
product
\begin{align*}
 \langle f,g \rangle_B := \frac{1}{|\det(B)|} \sum_{j\in \Lambda_B} f(B^{-\tr}j )
    \overline{ g(B^{-\tr}j)}
\end{align*}
for $f,\, g \in C(\Omega)$, the space of continuous functions on $\Omega$.
Then
\begin{align}
\label{eq:equiv_inner}
\langle f,\,g \rangle_B = \langle f,\,g \rangle: = \int_{\Omega} f(x) \overline{g(x)}dx,
\end{align}
for all $f,\,g$ in the finite dimensional subspace
\begin{align*}
  \mathcal{T}_B := \mathrm{span}\left\{\e^{2\pi i\, k\cdot x}:
                 k \in \Lambda_B^\dag \right\}.
\end{align*}
The dimension of $\mathcal{T}_B$ is $|\Lambda_B^\dag|= |\det B|$.
\end{thm}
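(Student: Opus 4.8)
The plan is to reduce the claimed identity to the orthogonality relations for characters of a finite abelian group. Since both $\langle\cdot,\cdot\rangle_B$ and $\langle\cdot,\cdot\rangle$ are linear in the first slot and conjugate-linear in the second, it suffices to verify $\langle\e_k,\e_l\rangle_B=\langle\e_k,\e_l\rangle$ for the spanning exponentials, i.e.\ for $k,l\in\Lambda_B^\dag$. For the integral this is immediate from the orthonormality of $\{\e_k\}_{k\in\ZZ^d}$ in $L^2(\Omega)$: $\langle\e_k,\e_l\rangle=\int_\Omega\e^{2\pi i(k-l)\cdot x}\,dx=\delta_{k,l}$ since $k-l\in\ZZ^d$. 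So everything reduces to the exponential sum
\begin{equation*}
\langle\e_k,\e_l\rangle_B=\frac1{|\det B|}\sum_{j\in\Lambda_B}\e^{2\pi i(k-l)\cdot B^{-\tr}j}=\frac1{|\det B|}\sum_{j\in\Lambda_B}\e^{2\pi i(B^{-1}(k-l))\cdot j},
\end{equation*}
where I have used $a\cdot B^{-\tr}b=(B^{-1}a)\cdot b$.

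The heart of the matter is the discrete orthogonality: for every $m\in\ZZ^d$,
\begin{equation*}
\sum_{j\in\Lambda_B}\e^{2\pi i(B^{-1}m)\cdot j}=\begin{cases}|\det B|,& m\in B\ZZ^d,\\ 0,& m\notin B\ZZ^d.\end{cases}
\end{equation*}
To see this, first note that $\Lambda_B=\{k\in\ZZ^d:k\in B^\tr\Omega\}$ is a complete set of coset representatives for $\ZZ^d/B^\tr\ZZ^d$: because $\Omega$ tiles $\RR^d$ exactly by $\ZZ^d$-translations, the image $B^\tr\Omega$ tiles $\RR^d$ exactly by $B^\tr\ZZ^d$-translations, so every residue class mod $B^\tr\ZZ^d$ has exactly one representative in $B^\tr\Omega$; in particular this recovers $|\Lambda_B|=|\det B|$. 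Next, the assignment $j\mapsto\e^{2\pi i(B^{-1}m)\cdot j}$ is invariant under $j\mapsto j+B^\tr n$, $n\in\ZZ^d$, because $(B^{-1}m)\cdot(B^\tr n)=m\cdot n\in\ZZ$; hence it descends to a character $\chi_m$ of the finite abelian group $G:=\ZZ^d/B^\tr\ZZ^d$, and $\chi_m$ is trivial exactly when $(B^{-1}m)\cdot j\in\ZZ$ for all $j\in\ZZ^d$, i.e.\ when $B^{-1}m\in\ZZ^d$, i.e.\ when $m\in B\ZZ^d$. The sum above is $\sum_{g\in G}\chi_m(g)$, which equals $|G|=|\det B|$ if $\chi_m$ is trivial and $0$ otherwise, by the character orthogonality on a finite abelian group. (One could instead diagonalize $B$ via its Smith normal form $B=U\,\mathrm{diag}(a_1,\dots,a_d)\,V$ with $U,V\in\mathrm{GL}_d(\ZZ)$ and split the sum into one-dimensional geometric series, but the character argument is cleaner.)

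Applying this with $m=k-l$ finishes the reduction: for $k,l\in\Lambda_B^\dag$, the same tiling argument—now with $\Omega_B$ in place of $B^\tr\Omega$ (which, like $\Omega$, may and should be taken half-open so as to be an exact fundamental domain for $B\ZZ^d$)—shows $\Lambda_B^\dag$ is a complete set of coset representatives for $\ZZ^d/B\ZZ^d$, whence $k-l\in B\ZZ^d$ forces $k=l$. Thus $\langle\e_k,\e_l\rangle_B=\delta_{k,l}=\langle\e_k,\e_l\rangle$, and \eqref{eq:equiv_inner} holds on all of $\CT_B$ by sesquilinearity. For the dimension count, the exponentials $\e_k$, $k\in\ZZ^d$, are linearly independent (they are orthonormal in $L^2(\Omega)$), so $\{\e_k:k\in\Lambda_B^\dag\}$ is a basis of $\CT_B$ and $\dim\CT_B=|\Lambda_B^\dag|=|\det B|$, the final equality being the cardinality fact recalled before the theorem. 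I expect the only real difficulty to be bookkeeping: one must keep straight that the node lattice is governed by $B^\tr\ZZ^d$ (the nodes are $B^{-\tr}\Lambda_B$) while the frequency set $\Lambda_B^\dag$ is governed by $B\ZZ^d$, and check that the pairing $(B^{-1}m)\cdot j=m\cdot(B^{-\tr}j)$ makes $\chi_m$ on $\ZZ^d/B^\tr\ZZ^d$ trivial precisely on $m\in B\ZZ^d$. Analytically there is nothing to do; the genuine inputs are the elementary fact that a set tiling $\RR^d$ by a lattice meets each residue class of that lattice in $\ZZ^d$ exactly once (this is where the tiling hypotheses on $\Omega$ and $\Omega_B$ enter) and the orthogonality relations for characters of a finite abelian group.
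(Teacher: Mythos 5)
Your argument is correct, and it is essentially the route the paper takes: the paper does not prove Theorem~\ref{thm:2.1} itself (it cites \cite[Theorem 2.5]{LSX}), but the key identity it records, \eqref{d-ortho1}, is exactly the discrete orthogonality lemma at the heart of your proposal, and your reduction by sesquilinearity to $\langle \e_k,\e_l\rangle_B=\delta_{k,l}$ for $k,l\in\Lambda_B^\dag$ is the standard one. The only content you add is a self-contained proof of \eqref{d-ortho1}: you observe that $\Lambda_B=\ZZ^d\cap B^\tr\Omega$ is an exact set of coset representatives for $\ZZ^d/B^\tr\ZZ^d$ (here the half-open choice of $\Omega$ matters, and you correctly flag the analogous point for $\Omega_B$ and $\Lambda_B^\dag$), so the sum becomes a character sum over a finite abelian group, trivial precisely when $m\in B\ZZ^d$; this is a clean and complete derivation of the identity the paper leaves to \cite{LSX}.
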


This result is a special case of a general result in \cite{LSX}, in which $\Omega$
is replaced by $\Omega_A$ for an invertible matrix $A$, and the set $\Lambda_B$
is replaced by $\Lambda_N$ with $N = B^\tr A$ and $N$ is
assumed to have integer entries. Since we are interested only at the cube
$[-\frac12,\frac12]^d$ in this paper, we have chosen $A$ as the identity matrix.

We can also use the discrete Fourier analysis to study interpolation based on
the points in $\Lambda_B$.  We say two points $x, y \in \RR^d$ congruent with
respect to the lattice $B\ZZ^d$, if $x - y \in B\ZZ^d$, and we write $x \equiv y
\mod B$. We then have the following result:

\begin{thm} \label{thm:interpolation}
For a generic function $f$ defined in $C(\Omega)$, the unique interpolation
function $\CI_B f$ in $\CT_B$ that satisfies
$$
    \CI_B f (B^{-\tr}j ) = f (B^{-\tr}j),\qquad  \forall j \in \Lambda_B
$$
is given by
\begin{align} \label{interpolation}
 \CI_B f (x)  =   \sum_{k \in \Lambda_B^\dag} \langle f, \e_k \rangle \e_k(x)
 = \sum_{k \in \Lambda_B} f(B^{-\tr}k)    \Psi_{\Omega_B}(x-B^{-\tr}k),
\end{align}
where
\begin{equation} \label{ell}
  \Psi_{\Omega_B} (x) = \frac{1}{|\det (B)|} \sum_{j \in \Lambda_{B^\dag}}
         \e^{2 \pi i  j^\tr  x}.
\end{equation}
\end{thm}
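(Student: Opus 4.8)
The plan is to establish \thmref{thm:interpolation} by leveraging \thmref{thm:2.1} to reduce everything to a finite-dimensional linear algebra statement on $\CT_B$, and then to verify the explicit reproducing-kernel formula \eqref{interpolation} directly. First I would observe that $\CT_B$ has dimension $|\det B| = |\Lambda_B|$, which exactly matches the number of interpolation conditions, so uniqueness of $\CI_B f$ in $\CT_B$ will follow once existence is shown, provided the point-evaluation functionals $f \mapsto f(B^{-\tr}j)$, $j \in \Lambda_B$, are linearly independent on $\CT_B$ — equivalently, that the only element of $\CT_B$ vanishing at all the nodes $B^{-\tr}j$ is $0$. This nonsingularity is precisely the kind of statement that \thmref{thm:2.1} delivers: if $g \in \CT_B$ vanishes at every node, then $\langle g, g\rangle_B = 0$, and by \eqref{eq:equiv_inner} (applied to $f = g$, which lies in $\CT_B$) we get $\langle g, g\rangle = \int_\Omega |g|^2 = 0$, hence $g \equiv 0$. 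So the discrete inner product being a genuine inner product on $\CT_B$ is the crux, and it is handed to us by the cited theorem.

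Next I would write down the candidate interpolant. The natural guess is the orthogonal projection of $f$ onto $\CT_B$ with respect to the discrete inner product, namely $\CI_B f = \sum_{k \in \Lambda_B^\dag} \langle f, \e_k\rangle_B\, \e_k$, using that $\{\e_k : k \in \Lambda_B^\dag\}$ is orthonormal for $\langle \cdot,\cdot\rangle_B$ (which itself follows from \thmref{thm:2.1}, since $\langle \e_k, \e_\ell\rangle_B = \langle \e_k,\e_\ell\rangle = \delta_{k\ell}$ for $k,\ell \in \Lambda_B^\dag$). To see this projection actually interpolates, I would check $\langle \CI_B f - f, \e_k\rangle_B = 0$ for all $k \in \Lambda_B^\dag$; since the functions $\e_k$, $k \in \Lambda_B^\dag$, and the evaluation functionals at the $|\Lambda_B|$ nodes span the same $|\det B|$-dimensional dual space (by the nonsingularity just established), $\CI_B f - f$ annihilating all $\langle \cdot, \e_k\rangle_B$ forces it to vanish at every node. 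Then I would replace $\langle f,\e_k\rangle_B$ by $\langle f,\e_k\rangle$; a priori these differ, but since the final formula only needs $\CI_B f$ to (i) lie in $\CT_B$ and (ii) interpolate at the nodes, and since $\langle f, \e_k\rangle_B$ depends on $f$ only through its values at the nodes $B^{-\tr}j$, the substitution is legitimate once one notes $\CI_B f$ also interpolates the function $\e_k$ restricted to nodes trivially — cleaner is to argue the first equality in \eqref{interpolation} is by definition with $\langle\cdot,\cdot\rangle$ and then verify it interpolates by a direct computation using \thmref{thm:2.1}.

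To obtain the second, Lagrange-type form in \eqref{interpolation}, I would interchange the order of summation: starting from $\CI_B f(x) = \sum_{k \in \Lambda_B^\dag} \langle f, \e_k\rangle\, \e_k(x)$, expand $\langle f,\e_k\rangle = \langle f,\e_k\rangle_B = \frac{1}{|\det B|}\sum_{j \in \Lambda_B} f(B^{-\tr}j)\, \overline{\e_k(B^{-\tr}j)}$, swap the two finite sums, and collect the coefficient of each $f(B^{-\tr}j)$. That coefficient is $\frac{1}{|\det B|}\sum_{k \in \Lambda_B^\dag} \e_k(x)\overline{\e_k(B^{-\tr}j)} = \frac{1}{|\det B|}\sum_{k \in \Lambda_B^\dag}\e^{2\pi i k\cdot(x - B^{-\tr}j)}$, which is exactly $\Psi_{\Omega_B}(x - B^{-\tr}j)$ once I reconcile the index set: \eqref{ell} sums over $\Lambda_{B^\dag}$ while here I have $\Lambda_B^\dag$, so I need the fact (from \eqref{LambdaB} and the standard reciprocity between a lattice and its dual, as in \cite{LSX}) that summing $\e^{2\pi i k^\tr y}$ over $k \in \Lambda_B^\dag$ and over $k \in \Lambda_{B^\dag}$ give the same function — essentially because both are complete sets of coset representatives producing the same periodic delta-like kernel. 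The main obstacle I anticipate is precisely this bookkeeping with the several lattices $\Lambda_B$, $\Lambda_B^\dag$, $\Lambda_{B^\dag}$ and matching the exponents and normalizations so that $\Psi_{\Omega_B}$ comes out exactly as in \eqref{ell}; the analytic content is light once \thmref{thm:2.1} is invoked, but keeping the transpose-inverses and dual-lattice identifications straight is where care is needed.
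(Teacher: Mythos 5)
Your argument is correct and follows essentially the route the paper itself indicates (it supplies no proof beyond pointing to the discrete orthogonality relations \eqref{d-ortho1}--\eqref{d-ortho2} and \cite{LSX}): uniqueness comes from positive-definiteness of $\langle\cdot,\cdot\rangle_B$ on $\CT_B$ together with the dimension count $\dim\CT_B=|\Lambda_B|$, and the Lagrange form comes from collecting $\frac{1}{|\det B|}\sum_{k\in\Lambda_B^\dag}\e_k(x)\overline{\e_k(B^{-\tr}j)}=\Psi_{\Omega_B}(x-B^{-\tr}j)$, whose reproducing property is exactly \eqref{d-ortho2} (the subscript $\Lambda_{B^\dag}$ in \eqref{ell} is indeed a misprint for $\Lambda_B^\dag$, so no dual-lattice reconciliation is actually needed). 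The only point on which you should be firm rather than tentative is the first equality in \eqref{interpolation}: for a generic $f\in C(\Omega)$ the coefficients must be read as the discrete ones $\langle f,\e_k\rangle_B$, since the continuous Fourier partial sum of a generic $f$ does not interpolate at the nodes, so your primary construction via the discrete projection is the correct reading and your closing suggestion to ``define with $\langle\cdot,\cdot\rangle$ and then verify interpolation'' should be discarded.
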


The proof of this result is based on the second one of the following two relations
that are of independent interests:
\begin{equation}\label{d-ortho1}
  \frac{1}{|\det(B)|}\sum_{j \in \Lambda_B} \e^{2 \pi i k^\tr B^{-\tr} j}
      = \begin{cases} 1,  &  \hbox{if  $k \equiv 0 \mod B$}, \\
              0, & \hbox{otherwise}, \end{cases}
\end{equation}
and
\begin{equation}\label{d-ortho2}
  \frac{1}{|\det(B)|}\sum_{k \in \Lambda_B^\dag} \e^{- 2 \pi i k^\tr B^{-\tr} j}
      = \begin{cases} 1,  &  \hbox{if  $j \equiv 0 \mod B^\tr$}, \\
              0, & \hbox{otherwise}. \end{cases}
\end{equation}

For proofs and further results we refer to \cite{LSX,LX}. Throughout this
paper we will write, for $k \in \ZZ^d$, $2 k = (2k_1,\ldots,2k_d)$ and
$2 k +1 = (2k_1+1,\ldots,2k_d+1)$.

%%%%%%%%%%%%%%%%%%%%%%
%%                  Section 3                     %%
%%%%%%%%%%%%%%%%%%%%%%

\section{Cubature and Interpolation on the square}
\setcounter{equation}{0}

In this section we consider the case $d =2$. In the first subsection,
the general results in the previous section is specialized to a special case
and cubature formulas are derived for a class of trigonometric functions.
These results are converted to results for algebraic polynomials in the
second subsection. Results on polynomial interpolation are derived in the
third subsection.

\subsection{Discrete Fourier analysis and cubature formulas on the plane}
We choose the matrix $B$ as
$$
  B =  n  \left[ \begin{matrix} 1 & 1\\ -1 & 1 \end{matrix} \right]
    \qquad \hbox{and} \qquad
  B^{-1} =   \frac{1}{2n} \left[ \begin{matrix} 1 & - 1\\ 1 & 1 \end{matrix} \right].
$$
Since $B$ is a rotation, by 45 degree, of a constant multiple of the diagonal
matrix, it is easy to see that the domain $\Omega_B$ is defined by
$$
   \Omega_B = \{x \in \RR^2 : -n \le x_1+x_2 < n, \,\, -n \le x_2 - x_1 < n\},
$$
which is depicted in Figure 1 below.

\begin{figure}[ht]
\centering
\includegraphics[width=0.4\textwidth]{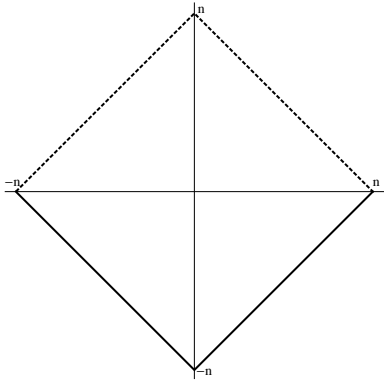}
\caption{Rhombus $\Omega_B$}
\end{figure}

\noindent
From the expression of $B^{-\tr}$, it follows readily that
$\Lambda_B = \Lambda_B^\dag  = :\Lambda_n$, where
$$
   \Lambda_n  =
       \{j \in \ZZ^2:  -n \le j_1+j_2 <n, \,\, -n \le j_2 - j_1 < n\}.
$$
The cardinality of $\Lambda_n$ is $|\Lambda_n| = 2 n^2$.
We further denote the space $\CT_B$ by $\CT_n$, which is given by
$$
  \CT_n : = \mathrm{span}
      \left\{\e^{2\pi i\, k\cdot x} : \ k \in \Lambda_{n} \right\}.
$$

\begin{thm} \label{inner1}
Define the set
$$
X_n: = \left \{  2 k : - \tfrac{n}{2}  \le k_1, k_2 < \tfrac{n}{2}\}   \cup  \{  2 k +1 :
       - \tfrac{n+1}{2}  \le k_1, k_2 < \tfrac{n-1}{2} \right\}.
$$
Then for all $f,g \in \CT_n$,
$$
  \langle f, g \rangle_n : = \frac{1}{2n^2} \sum_{k\in X_n} f(\tfrac{k}{2n})
         \overline{g(\tfrac{k}{2n})}
     = \int_{[-\frac12,\frac12]^2} f(x)\overline{g(x)} dx.
$$
\end{thm}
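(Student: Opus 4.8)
The plan is to deduce Theorem~\ref{inner1} from the general cubature result \thmref{thm:2.1} by exhibiting the set $X_n$ as a concrete realization of the node set $\{B^{-\tr} j : j \in \Lambda_B\}$ for the chosen matrix $B = n\left[\begin{smallmatrix} 1 & 1 \\ -1 & 1\end{smallmatrix}\right]$. Since \thmref{thm:2.1} already guarantees that $\langle f,g\rangle_B = \int_\Omega f\bar g$ for all $f,g \in \CT_B = \CT_n$, and the normalization constant there is $1/|\det B| = 1/(2n^2)$, the entire content of the theorem reduces to verifying the set identity
\begin{equation*}
  \Big\{ B^{-\tr} j : j \in \Lambda_B \Big\} = \Big\{ \tfrac{k}{2n} : k \in X_n \Big\}
\end{equation*}
as multisets (in fact as sets, each of cardinality $2n^2 = |\det B|$), together with the observation that $f(B^{-\tr}j)$ in the inner product depends only on the point $B^{-\tr}j \bmod \ZZ^d$ since $f \in \CT_n$ is $\ZZ^2$-periodic; this last point lets us replace the defining representatives $\Lambda_B$ by the more symmetric representatives encoded in $X_n$.

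First I would compute $B^{-\tr} = \frac{1}{2n}\left[\begin{smallmatrix} 1 & 1 \\ -1 & 1\end{smallmatrix}\right]$ and note that $\Lambda_B = \Lambda_n$ was already identified in the text. So the left-hand node set is $\{ \frac{1}{2n}(j_1+j_2,\, j_2-j_1) : (j_1,j_2) \in \Lambda_n\}$. Writing $m_1 = j_1 + j_2$, $m_2 = j_2 - j_1$, the constraints defining $\Lambda_n$ become exactly $-n \le m_1 < n$ and $-n \le m_2 < n$; but $(m_1,m_2)$ ranges not over all of $\ZZ^2 \cap [-n,n)^2$ — rather $m_1 \equiv m_2 \pmod 2$ since $m_1 + m_2 = 2j_2$ and $m_1 - m_2 = -2j_1$. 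So the image is $\{ \frac{1}{2n}(m_1, m_2) : m_1, m_2 \in \ZZ,\ -n \le m_1, m_2 < n,\ m_1 \equiv m_2 \bmod 2\}$. This is a set of $2n^2$ points: it splits into the "even–even" sublattice, giving points $\frac{1}{2n}(2a, 2b) = \frac{a}{n}(1,0)\cdot\ldots$ with $-\frac{n}{2} \le a, b < \frac{n}{2}$, and the "odd–odd" part, giving $\frac{1}{2n}(2a+1, 2b+1)$ with $-\frac{n+1}{2} \le a, b < \frac{n-1}{2}$. These are precisely $\frac{1}{2n} X_n$.

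The one genuine subtlety — and the step I expect to need the most care — is the bookkeeping at the boundary of the half-open box, i.e.\ checking that the index ranges in the definition of $X_n$ (namely $-\frac{n}{2} \le k_1,k_2 < \frac{n}{2}$ for the even part and $-\frac{n+1}{2} \le k_1, k_2 < \frac{n-1}{2}$ for the odd part) exactly match the constraints $-n \le m_i < n$ with $m_i$ of the correct parity, with no point counted twice and none omitted, for both parities of $n$. For the even part, $-n \le 2k_i < n$ is equivalent to $-\frac n2 \le k_i < \frac n2$, which is what appears; for the odd part, $-n \le 2k_i + 1 < n$ is equivalent to $-\frac{n+1}{2} \le k_i < \frac{n-1}{2}$, again matching. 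One should also confirm the two sublattices are disjoint modulo $\ZZ^2$ so that no node of $X_n$ is identified with another, and count: the even part has $n^2$ points and the odd part has $n^2$ points (for each parity of $n$ the floor/ceiling arithmetic gives exactly $n$ integers in each range), totalling $2n^2 = |\det B|$, so no collapse occurs. Once this identification is in hand, the displayed equality of the discrete sum with the integral is immediate from \thmref{thm:2.1}, because the substitution $k = 2nj\text{-image}$ merely reindexes the sum $\frac{1}{2n^2}\sum_{j \in \Lambda_B} f(B^{-\tr}j)\overline{g(B^{-\tr}j)}$ as $\frac{1}{2n^2}\sum_{k \in X_n} f(\frac{k}{2n})\overline{g(\frac{k}{2n})}$, using periodicity of $f,g \in \CT_n$ to justify that only the residue class of each node matters.
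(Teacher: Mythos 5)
Your proposal is correct and follows essentially the same route as the paper: both proofs rest on the change of variables $k=2nB^{-\tr}j$, i.e.\ $k_1=j_1+j_2$, $k_2=j_2-j_1$, which identifies $\Lambda_n$ bijectively with $X_n$ (your parity and boundary bookkeeping is exactly the content of the paper's relation \eqref{j-k}), after which the statement is a special case of Theorem~\ref{thm:2.1} with $|\det B|=2n^2$. The only difference is that you invoke periodicity to match nodes modulo $\ZZ^2$, which is not actually needed since every $\tfrac{k}{2n}$ with $k\in X_n$ already lies in $\Omega=[-\tfrac12,\tfrac12)^2$ and equals $B^{-\tr}j$ exactly.
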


\begin{proof}
Changing variables from $j$ to $k = 2n B^{-\tr}j$, or $k_1 =j_1+j_2$
and $k_2 = j_2 - j_1$, then, as $j_1$ and $j_2$ need to be integers
and $j_1 = \frac{k_1- k_2}{2}$, $j_2 = \frac{k_1+k_2}{2}$, we see
that
\begin{equation} \label{j-k}
    j\in \Lambda_n  \qquad \Longleftrightarrow  \qquad k= 2n B^{-\tr}j \in X_n.
\end{equation}
Hence, as $\det(B) = 2n^2$, we conclude that $\la f,g\ra_n = \la f,
g \ra_B$ and this theorem follows as a special case of Theorem
\ref{thm:2.1}.
\end{proof}

The set $\Lambda_n$ lacks symmetry as the inequalities in its definition
are half open and half closed. We denote its symmetric counterpart by
$\Lambda_n^*$, which is defined by
$$
  \Lambda_n^* : = \{j \in \ZZ^2:  -n \le j_1+j_2 \le n, \,\, -n \le j_1 - j_2 \le n\}.
$$
We also denote the counterpart of $\CT_n$ by $\CT_n^*$, which is defined by
$$
  \CT_n^* : = \mathrm{span}
      \left\{\e^{2\pi i\, k\cdot x} : \ k \in \Lambda_{n}^* \right\}.
$$
Along the same line, we also define the counterpart of $X_n$ as
$$
X_n^* : = \left \{ 2 k: - \tfrac{n}{2} \le k_1, k_2 \le \tfrac{n}{2}\}   \cup  \{ 2 k+1:
      - \tfrac{n+1}{2}  \le k_1, k_2 \le \tfrac{n-1}{2}\right\}\!.
$$
It is easy to see that $|X_n| = |\Lambda_n| = 2n^2$, whereas $|X_n^*| =
2n^2 + 2n+1$. We further partition the set $X_n^*$ into three parts,
$$
   X_n^* = X_n^\circ  \cup X_n^e \cup X_n^v,
$$
where $X_n^{\circ} = X_n^* \cap (-n,n)^2$ is the set of interior points of
$X_n^*$,  $X_n^{e}$ consists of those points in $X_n^*$ that are on the
edges of $[-n,n]^2$ but not on the 4 vertices or corners, while $X_n^v$
consists of those points of $X_n^*$ at the vertices of  $[-n,n]^2$.

\begin{thm} \label{inner2}
Define the inner product
\begin{equation} \label{eq:inner2}
\la f, g \ra_n^* : = \frac{1}{2 n^2} \sum_{k \in X_n^*} c_k^{(n)} f(\tfrac{k}{2n})
          \overline{g(\tfrac{k}{2n})},
       \qquad \hbox{where}\quad
            c_k^{(n)} = \begin{cases} 1, & k \in X_n^\circ \\ \frac12, & k \in X_n^e \\
                                             \frac14, & k \in X_n^v   \end{cases}.
\end{equation}
Then for all $f,g \in \CT_n$,
$$
 \int_{[-\frac12,\frac12]^2} f(x)\overline{g(x)} dx = \langle f, g \rangle_n
        = \langle f, g \rangle_n^*.
$$
\end{thm}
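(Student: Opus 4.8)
The plan is to show that the new weighted sum $\langle f,g\rangle_n^*$ over the symmetric node set $X_n^*$ coincides with the old unweighted sum $\langle f,g\rangle_n$ over $X_n$ whenever $f,g\in\CT_n$; the equality with the integral is then immediate from \thmref{inner1}. The two node sets $X_n$ and $X_n^*$ differ only in the treatment of the boundary of $[-n,n]^2$: $X_n$ uses half-open boxes, so each pair of opposite edges and each group of corners is counted once, while $X_n^*$ uses closed boxes and therefore picks up extra copies of boundary points. The key observation is that all the functions $\e^{2\pi i k\cdot x}$ with $k\in\Lambda_n$ are $\ZZ^d$-periodic, and the boundary points of $X_n^*$ occur in periodic orbits: a point $\frac{k}{2n}$ with $k$ on an edge of $[-n,n]^2$ differs by an integer vector from its partner on the opposite edge (since the edge-to-edge displacement is $(2n,0)$ or $(0,2n)$, and dividing by $2n$ gives a lattice vector), and the four corner points $\frac{k}{2n}$ with $k\in X_n^v$ are all congruent mod $\ZZ^2$.

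Carrying this out, I would first compute, for each periodic orbit of boundary points, how the weights $c_k^{(n)}$ distribute. For an edge point and its opposite partner the orbit has size $2$ and each carries weight $\tfrac12$, summing to $1$; but exactly one of the two lies in $X_n$ (the half-open convention keeps the lower edge, drops the upper). For the corners the orbit has size $4$, each with weight $\tfrac14$, summing to $1$, and again exactly one corner lies in $X_n$. Hence for any $\ZZ^2$-periodic function $h$ one has $\sum_{k\in X_n^*}c_k^{(n)}h(\tfrac{k}{2n})=\sum_{k\in X_n}h(\tfrac{k}{2n})$, because the interior points (weight $1$, present in both sets) match verbatim, while each boundary orbit contributes the same total $h$-value to both sides. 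Applying this with $h=f\bar g$, which lies in the span of $\e^{2\pi i(k-\ell)\cdot x}$ with $k,\ell\in\Lambda_n$ and is therefore $\ZZ^2$-periodic, gives $\langle f,g\rangle_n^*=\langle f,g\rangle_n$, and combining with \thmref{inner1} finishes the proof.

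The one point needing genuine care is the bookkeeping on the edges: one must check that the indexing conventions in the definitions of $X_n$ and $X_n^*$ really do split into an ``even'' sublattice $2k$ and an ``odd'' sublattice $2k+1$ in a way compatible with the $45^\circ$-rotated geometry, so that the half-open versus closed distinction lands exactly on opposite faces of $[-n,n]^2$ and not, say, on adjacent faces. Concretely, I would verify that for the even part the constraint $-\tfrac n2\le k_i\le\tfrac n2$ versus $-\tfrac n2\le k_i<\tfrac n2$ drops the faces $k_i=\tfrac n2$, i.e.\ $x_i=\tfrac12$, and likewise for the odd part the face $k_i=\tfrac{n-1}2$, i.e.\ again $x_i=\tfrac12$ after the shift — so in both cases precisely the ``upper'' faces of the cube $[-\tfrac12,\tfrac12]^2$ are excluded, which are exactly the $\ZZ^2$-translates of the retained ``lower'' faces. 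Once this identification is pinned down, the multiplicity count $1+1\to$ (orbit of $2$, weights $\tfrac12+\tfrac12$) and $1\to$ (orbit of $4$, weights $\tfrac14\times4$) is routine, and the whole statement reduces to \thmref{inner1}. I do not expect any analytic difficulty; the entire content is combinatorial lattice bookkeeping plus periodicity of exponentials.
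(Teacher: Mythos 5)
Your argument is correct and follows essentially the same route as the paper's proof: interior points match termwise, each edge point of $X_n^*$ pairs with its opposite-edge translate (weights $\tfrac12+\tfrac12$, exactly one of the pair lying in $X_n$), the four corners form a single $\ZZ^2$-orbit with weights summing to $1$, and periodicity of $f\bar g$ (which you justify slightly more carefully than the paper does, via $f\bar g\in\operatorname{span}\{\e^{2\pi i(k-\ell)\cdot x}\}$) reduces everything to \thmref{inner1}.
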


\begin{proof}
Evidently we only need to show that $\langle f, g \rangle_n = \langle f, g \rangle_n^*$.
Since $c_k^{(n)} =1$ for $k \in X_n^\circ$, the partial sums over interior points
of the two sums agree. The set $X_n^e$ of boundary points can be divided into
two parts, $X_n^e = X_n^{e,1} \cup X_n^{e,2}$, where $X_n^{e,1}$ consists of
points in $X_n$ that are on the edges of $[-n, n)^2$, but not equal to $(-n,-n)$,
and $X_n^{e,2}$ is the complementary of $X_n^{e,1}$ in $X_n^e$. Evidently,
if $x \in X_n^{e,1}$, then either $x + (2n,0)$ or $x+(0,2n)$
belongs to $X_n^{e,2}$. Hence,  if $f$ is a periodic function, $f(x+k) = f(x)$
for $k \in \ZZ^2$, then
$$
\sum_{k \in X_n^e} c_k^{(n)} f(\tfrac{k}{2n}) =
  \frac12 \sum_{k \in X_n^{e}}  f(\tfrac{k}{2n}) =
        \sum_{k \in X_n^{e,1}} f(\tfrac{k}{2n}).
$$
Furthermore, for $(-n, -n) \in X_n$,  $X_n^*$ contains all four vertices
$(\pm n, \pm n)$. Since a periodic function takes the same value on all four
points, $\sum_{k \in X_n^v} c_k^{(n)} f(\tfrac{k}{2n}) = f(-\frac12, -\frac12)$.
Consequently, we have proved that $\langle f, g \rangle_n = \langle f, g\rangle_n^*$
if $f, g$ are periodic functions.
\end{proof}

As a consequence of the above two theorems, we deduce the following two
cubature formulas:

\begin{thm} \label{thm:cuba1}
For $n \ge 2$, the cubature formulas
\begin{equation} \label{cuba1}
   \int_{[-\frac12,\frac12]^2} f(x) dx =  \frac{1}{2n^2} \sum_{k \in X_n^*} c_k^{(n)} f(\tfrac{k}{2n})
     \quad\hbox{and} \quad
  \int_{[-\frac12,\frac12]^2} f(x) dx =  \frac{1}{2n^2} \sum_{k \in X_n} f(\tfrac{k}{2n})
\end{equation}
are exact for $f \in \CT_{2n-1}^*$.
\end{thm}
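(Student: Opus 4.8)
The plan is to test both cubature formulas on the exponentials $\e_k$ with $k\in\Lambda_{2n-1}^*$, since these span $\CT_{2n-1}^*$ and $\int_{[-\frac12,\frac12]^2}\e_k(x)\,dx=\delta_{k,0}$. One tempting route is to write each such $\e_k$ as $\e_j\overline{\e_l}$ with $j,l\in\Lambda_n$ and then quote Theorems~\ref{inner1} and \ref{inner2}; this works for most $k$ but fails for the ``corner'' frequencies $k=\pm(2n-1,0)$, which are genuinely not differences of two elements of $\Lambda_n$. It is cleaner to go back to the orthogonality relation \eqref{d-ortho1}, which needs only a sum over $\Lambda_n$ and no factorization.

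For the second formula in \eqref{cuba1} I would first undo the change of variables $m=2nB^{-\tr}j$ from the proof of Theorem~\ref{inner1}: by \eqref{j-k} the map $j\mapsto 2nB^{-\tr}j$ is a bijection of $\Lambda_n$ onto $X_n$, so
$$
\frac{1}{2n^2}\sum_{m\in X_n}\e_k(\tfrac{m}{2n})
 =\frac{1}{|\det B|}\sum_{j\in\Lambda_n}\e^{2\pi i\,k^\tr B^{-\tr}j}.
$$
By \eqref{d-ortho1} the right-hand side equals $1$ if $k\in B\ZZ^2$ and $0$ otherwise. Since $B^{-1}k=\frac1{2n}(k_1-k_2,\,k_1+k_2)$, one has $k\in B\ZZ^2$ precisely when both $k_1+k_2$ and $k_1-k_2$ are divisible by $2n$; but for $k\in\Lambda_{2n-1}^*$ we have $|k_1+k_2|\le 2n-1$ and $|k_1-k_2|\le 2n-1$, so this can only happen when $k_1+k_2=k_1-k_2=0$, i.e.\ $k=0$. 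Hence the sum equals $\delta_{k,0}=\int_{[-\frac12,\frac12]^2}\e_k$, which is exactly the asserted exactness on $\CT_{2n-1}^*$.

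For the first formula in \eqref{cuba1} I would avoid recomputing: the proof of Theorem~\ref{inner2} in fact establishes $\frac{1}{2n^2}\sum_{m\in X_n}f(\tfrac{m}{2n})=\frac{1}{2n^2}\sum_{m\in X_n^*}c_m^{(n)}f(\tfrac{m}{2n})$ for \emph{every} function $f$ periodic with respect to $\ZZ^2$ (the argument there uses only periodicity, not membership in $\CT_n$). Applying this with $f=\e_k$ for $k\in\Lambda_{2n-1}^*$ and combining with the evaluation just obtained gives $\frac{1}{2n^2}\sum_{m\in X_n^*}c_m^{(n)}\e_k(\tfrac{m}{2n})=\int_{[-\frac12,\frac12]^2}\e_k$, hence exactness on $\CT_{2n-1}^*$.

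Set up this way the proof is short; the one place that needs care---and where the index $2n-1$ comes from---is the arithmetic observation $B\ZZ^2\cap\Lambda_{2n-1}^*=\{0\}$: for nonzero $k\in\Lambda_{2n-1}^*$ the pair $(k_1+k_2,\,k_1-k_2)$ is nonzero with both entries of absolute value at most $2n-1$, so one of them is a nonzero integer not divisible by $2n$. I would in particular double-check the boundary frequencies of $\Lambda_{2n-1}^*$ included by the closed inequalities, e.g.\ the corners $(\pm(2n-1),0)$ and $(0,\pm(2n-1))$, for which $k_1\pm k_2$ takes the extreme values $\pm(2n-1)$: these are not multiples of $2n$, so the same computation applies verbatim and nothing special is needed.
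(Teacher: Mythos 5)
Your proof is correct and in substance the same as the paper's: both reduce to testing the exponentials $\e_k$, $k\in\Lambda_{2n-1}^*$, use periodicity to pass between the sums over $X_n$ and $X_n^*$, and rest on the arithmetic fact that $B\ZZ^2\cap\Lambda_{2n-1}^*=\{0\}$ (the paper phrases this as writing $j=\nu+Bl$ with $\nu\in\Lambda_n$ and checking that $\nu=0$ forces $j=0$, which is equivalent to your direct appeal to \eqref{d-ortho1}). Your observation that the argument proving Theorem~\ref{inner2} uses only periodicity, not membership in $\CT_n$, is precisely the point the paper also exploits.
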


\begin{proof}
It suffices to proof that both cubature formulas in \eqref{cuba1} are exact for 
every $\e_j$ with $j\in \Lambda^*_{2n-1}$. For this purpose, we first claim that 
for any $j\in \ZZ^2$, there exist $\nu\in \Lambda_n$ and $l\in \ZZ^2$ such that
$j=\nu+Bl$. Indeed, the translations of $\Omega_B$ by $B\ZZ^2$ tile $\RR^2$, 
thus we have $j=x+Bl$ for certain $x\in \Omega_B$ and $l\in \ZZ^2$. Since 
all entries of the matrix $B$ are integers, we further deduce that 
$\nu:=x=j-Bl\in \ZZ^2\cap \Omega_B=\Lambda_n$.

Next assume $j\in \Lambda^*_{2n-1}$. Clearly  the integral of $\e_j$ over $\Omega$ is
$\delta_{j,0}$. On the other hand, let us suppose $j=\nu+Bl $ with $\nu\in \Lambda_n$ and $l\in \ZZ^2$. Then it is easy to see that $\e_j(\frac{k}{2n}) = \e_{\nu}(\frac{k}{2n})$
for each $k\in X^*_n$. Consequently, we obtain from Theorem \ref{inner2} that
\begin{align*}
   \sum_{k\in X_n^*} c_{k}^{(n)}\e_{j}(\tfrac{k}{2n}) & = \sum_{k\in X_n^*}
     c_{k}^{(n)}\e_{\nu}(\tfrac{k}{2n})  = \sum_{k\in X_n}
     \e_{\nu}(\tfrac{k}{2n}) \\ 
      & = \sum_{k\in X_n} \e_{j}(\tfrac{k}{2n})
     = \int_\Omega  \e_{\nu}(x) dx = \delta_{\nu,0}.
\end{align*}
Since  $\nu=0$ implies $j=Bl\in \ZZ^2$ which gives $j=l=0$, we further obtain 
that $\delta_{\nu,0}=\delta_{j,0}$. This completes the proof of \eqref{cuba1}.
\end{proof}

We note that the second cubature in \eqref{cuba1} is a so-called Chebyshev
cubature; that is, all its weights are equal.

%%%%%%%%%%%%%%%%%%%%
%%%      algebraic cubature      %%%
%%%%%%%%%%%%%%%%%%%%

\subsection{Cubature for algebraic polynomials}

The set $\Lambda_n^*$ is symmetric with respect to the mappings
$(x_1,x_2) \mapsto (-x_1,x_2)$ and $(x_1,x_2) \mapsto (x_1, -x_2)$.
It follows that both the spaces
\begin{align*}
 \CT_n^{\rm{even}}:& =\mathrm{span} \{\cos 2 \pi j_1x_1 \cos 2 \pi j_2 x_2:
     0 \le j_1 + j_2 \le n \}, \\
 \CT_n^{\rm{odd}}: & = \mathrm{span}
    \{\sin 2 \pi j_1x_1 \sin 2 \pi j_2 x_2:  1 \le j_1 + j_2 \le n \}
\end{align*}
are subspaces of $\CT_n^*$.  Recall that Chebyshev polynomials of the first kind,
$T_n(t)$, and the second kind, $U_n(t)$, are defined, respectively, by
$$
 T_n(t)= \cos  n \theta \quad\hbox{and}\quad
  U_n(t) = \frac{\sin (n+1)\t}{\sin \t},
   \qquad t = \cos  \t.
$$
They are orthogonal with respect to $w_0(t) =1/\sqrt{1-t^2}$ and $w_1(t)
= \sqrt{1-t^2}$ over $[-1,1]$, respectively. Both are algebraic polynomials of
degree $n$ in $t$. Recall the definition of $W_0$ and $W_1$ in \eqref{W0W1}.
Under the changing of variables
\begin{equation} \label{t-x}
 t_1 = \cos 2\pi x_1, \quad t_2 = \cos 2\pi x_2,  \qquad
     (x_1,x_2) \in [-\tfrac12, \tfrac12]^2,
\end{equation}
the subspace $\CT_n^{\rm even}$ becomes the space $\Pi_n^2$ of polynomials
of degree $n$ in the variables $(t_1,t_2)$,
$$
   \Pi_n^2  = \mathrm{span} \{T_j(t_1)T_{k-j}(t_2): 0 \le j \le k \le n\}
$$
and the orthogonality of $\e_k$ over $\Omega$ implies that $T_j^k(t):=
T_j(t_1)T_{k-j}(t_2)$ are orthogonal polynomials of two variables,
$$
   \frac{1}{\pi^2} \int_{[-1,1]^2} T_j^k(t) T_{j'}^{k'}(t)  W_0(t) dt
   =\begin{cases}  1,& k=k'=j=j'=0,\\
     \tfrac12, & (k,j) = (k',j') \hbox{ and }  (k-j)j=0,\\
     \tfrac14, & k=k'>j=j'>0,\\
      0, &(k,j) \ne (k',j').
      \end{cases}
$$
We note also that the subspace $\CT_n^{\mathrm{odd}}$ becomes the space
$\{\sqrt{1-t_1^2} \sqrt{1-t_2^2}\, p(t): p \in \Pi_{n-1}^2\}$ in the variables
$t=(t_1,t_2)$, and the orthogonality of $\e_k$ also implies that
$U_j^k(t):=
U_j(t_1)U_{k-j}(t_2)$ are orthogonal polynomials of two variables,
$$
   \frac{1}{\pi^2} \int_{[-1,1]^2} U_j^k(t) U_{j'}^{k'}(t)
    W_1(t) dt  = \frac{1}{4} \delta_{j,j'}\delta_{k,k'}.
$$

The symmetry allows us to translate the results in the previous
subsection to algebraic polynomials. Since $\cos 2\pi j_1 x_1 \cos
2\pi j_2 x_2$ are even in both variables, we only need to consider
their values over $X_n^* \cap \{x: x_1 \ge 0, x_2 \ge 0\}$. Hence,
we define
\begin{equation} \label{Xi}
  \Xi_n: = \{(2k_1,{2 k_2}): 0 \le k_1,k_2 \le \tfrac{n}{2}\}
       \cup  \{({2k_1+1},{2k_2+1}): 0 \le k_1,k_2 \le \tfrac{n-1}{2}\},
\end{equation}
and, under the change of variables \eqref{t-x},
\begin{equation} \label{G_n}
  \Gamma_n: = \{(z_{k_1},z_{k_2}): (k_1,k_2)\in \Xi_n\}, \qquad \hbox{where}\quad
       z_k = \cos \tfrac{k \pi}{n}.
\end{equation}
Furthermore, we denote by $\Gamma_n^\circ := \Gamma_n \cap (-1,1)^2$ the
subset of interior points of $\Gamma_n$, by $\Gamma_n^e$ the set of points
in $\Gamma_n$ that are on the boundary of $[-1,1]^2$ but not on the four
corners, and by $\Gamma_n^v$ the set of points in $\Gamma_n$ that are on
the corners of $[-1,1]^2$.  The sets $\Xi_n^\circ$, $\Xi_n^e$ and $\Xi_n^v$ are
defined accordingly. A simple counting shows that
\begin{equation} \label{Gamma_n}
  |\Xi_n| = (\lfloor \tfrac{n}2 \rfloor+1)^2 + ( \lfloor \tfrac{n-1}2  \rfloor+1)^2
        = \frac{n(n+1)}{2} +\Big \lfloor \frac{n}{2} \Big\rfloor +1.
\end{equation}

\begin{thm} \label{thm:cubaT}
The cubature formula
\begin{equation} \label{cubaT}
 \frac{1}{\pi^2} \int_{[-1,1]^2} f(t) W_0(t) dt
        = \frac{1}{2 n^2}\sum_{k \in \Xi_n} \l_k^{(n)} f(z_{k_1},z_{k_2}),
        \quad
  \l_k^{(n)} : = \begin{cases} 4, & k \in \Xi_n^\circ, \\
     2, & k \in \Xi_n^e, \\
     1, & k \in \Xi_n^v, \end{cases}
\end{equation}
is exact for $\Pi_{2n-1}^2$.
\end{thm}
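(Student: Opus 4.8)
The plan is to push the trigonometric cubature of Theorem~\ref{thm:cuba1} forward to algebraic polynomials through the substitution~\eqref{t-x}, and then to fold the symmetric node set $X_n^*$ onto its first-octant piece $\Xi_n$.

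First I would record the change of variables. Put $F(x):=f(\cos 2\pi x_1,\cos 2\pi x_2)$ for $f\in\Pi_{2n-1}^2$. A one-variable computation (substitute $u=2\pi x_i$, then $t_i=\cos u$, using that $x_i\mapsto\cos 2\pi x_i$ maps $[-\frac12,\frac12]$ two-to-one onto $[-1,1]$) gives
\begin{equation*}
  \int_{[-\frac12,\frac12]^2} F(x)\,dx=\frac{1}{\pi^2}\int_{[-1,1]^2}f(t)\,W_0(t)\,dt,
\end{equation*}
which is exactly the left-hand side of~\eqref{cubaT}. Since $f$ is a linear combination of the products $T_j(t_1)T_{k-j}(t_2)$ with $0\le j\le k\le 2n-1$ and $T_m(\cos 2\pi x)=\cos 2\pi m x$, we have $F\in\CT_{2n-1}^{\mathrm{even}}\subset\CT_{2n-1}^*$, so the first cubature in Theorem~\ref{thm:cuba1} applies and yields
\begin{equation*}
  \frac{1}{\pi^2}\int_{[-1,1]^2}f(t)\,W_0(t)\,dt=\frac{1}{2n^2}\sum_{\ell\in X_n^*}c_\ell^{(n)}\,F(\tfrac{\ell}{2n}).
\end{equation*}

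What remains is a counting argument. One checks from the definitions that $X_n^*=\{\ell\in\ZZ^2:-n\le\ell_1,\ell_2\le n,\ \ell_1\equiv\ell_2\pmod 2\}$ and $\Xi_n=\{\ell\in\ZZ^2:0\le\ell_1,\ell_2\le n,\ \ell_1\equiv\ell_2\pmod 2\}$, and that the weight factors as $c_\ell^{(n)}=c'_{\ell_1}c'_{\ell_2}$ with $c'_m=1$ for $|m|<n$ and $c'_m=\tfrac12$ for $|m|=n$. Because $\cos 2\pi\frac{\ell_i}{2n}=\cos\frac{\ell_i\pi}{n}=z_{|\ell_i|}$, the summand $F(\ell/2n)=f(z_{|\ell_1|},z_{|\ell_2|})$ depends on $\ell$ only through $(|\ell_1|,|\ell_2|)$. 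Splitting the sum over $X_n^*$ into its even--even and odd--odd parts and grouping terms by the value $(a,b)=(|\ell_1|,|\ell_2|)$, which now runs over $\Xi_n$, the coefficient of $f(z_a,z_b)$ is $w_a w_b$, where $w_a:=\sum_{\ell_i\in[-n,n],\,|\ell_i|=a}c'_{\ell_i}$ equals $c'_0=1$ if $a=0$, equals $c'_a+c'_{-a}=2$ if $0<a<n$, and equals $c'_n+c'_{-n}=1$ if $a=n$. Since $z_m=\pm1$ exactly when $m\in\{0,n\}$, a direct check gives $w_a w_b=4,2,1$ according as $(a,b)\in\Xi_n^\circ,\Xi_n^e,\Xi_n^v$; that is, $w_a w_b=\lambda_{(a,b)}^{(n)}$, which is precisely~\eqref{cubaT}.

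I expect the only delicate point to be this last bit of bookkeeping: one must verify that the even--even and odd--odd sublattices of $X_n^*$ fold precisely onto the corresponding parts of $\Xi_n$ (so that the parity condition is honoured and no node is miscounted), and that the trichotomy interior/edge/vertex of $[-1,1]^2$ for $(z_a,z_b)$ lines up with the three values of $w_a w_b$. Everything else is the change-of-variables identity together with a direct appeal to Theorem~\ref{thm:cuba1}.
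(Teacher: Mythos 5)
Your proof is correct and follows essentially the same route as the paper: lift $f$ to the even trigonometric polynomial $F(x)=f(\cos 2\pi x_1,\cos 2\pi x_2)\in\CT_{2n-1}^*$, apply the first cubature of Theorem~\ref{thm:cuba1}, and fold the sign-symmetric set $X_n^*$ onto $\Xi_n$. The only (harmless) difference is organizational: the paper verifies the folded weights by a case analysis on the orbits $k\sigma$, while you obtain the same counts more cleanly from the tensor-product factorization $c_\ell^{(n)}=c'_{\ell_1}c'_{\ell_2}$ and the resulting $w_aw_b=\lambda_{(a,b)}^{(n)}$.
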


\begin{proof}
We note that $X_n^*$ is symmetric in the sense that $k \in X_n^*$
implies that $(-k_1, k_2) \in X_n^*$ and $(k_1,-k_2) \in X_n^*$. Let
$g(x) = f(\cos2\pi x_1,\cos 2\pi x_2)$. Then $g$ is even in each of
its variables and $g(\frac{k}{2n}) = f(z_{k_1}, z_{k_2})$. Notice
that $f \in \Pi_{2n-1}^2$ implies $g \in \CT_{2n-1}^*$. Applying the
first cubature formula \eqref{cuba1} to $g(x)$, we see that
\eqref{cubaT} follows from the following identity,
$$
\sum_{k \in X^*_n}  c_k^{(n)} g(\tfrac{k}{2n}) = \sum_{k \in \Xi_n} \l_k^{(n)}
  f(z_{k_1},z_{k_2}).
$$
To prove this identity, let $k \sigma$ denote the set of distinct
elements in $\{(\pm k_1,\pm k_2)\}$; then $g(\frac{k}{2n})$ takes
the same value on all points in $k \sigma$. If $k \in X_n^*$, $k_1
\ne  0$ and $k_2 \ne 0$, then $k \sigma$ contains 4 points;
$\sum_{j\in k\sigma} c_k^{(n)} g(\frac{j}{2n}) = 4 g(\frac{k}{2n})$
if $k \in X_n^\circ$, $\sum_{j\in k\sigma} c_k^{(n)} g(\frac{j}{2n})
= 2 g(\frac{k}{2n})$ if $k \in X_n^e$, and $\sum_{j\in k\sigma}
c_k^{(n)} g(\frac{j}{2n}) = g(\frac{k}{2n})$ if $k \in X_n^v$. If
$k_1 =0$ and $k_2 \ne 0$ or $k_2 =0$ and $k_1 \ne 0$, then $k
\sigma$ contains 2 points; $\sum_{j\in k\sigma} c_k^{(n)}
g(\frac{j}{2n}) = 2 g(\frac{k}{2n})$ if $k \in X_n^\circ$ and
$\sum_{j\in k\sigma} c_k^{(n)} g(\frac{j}{2n}) =  g(\frac{k}{2n})$
if $k \in X_n^e$. Finally, if $k=(0,0)$ then $k \sigma$ contains 1
point and $g(0,0)$ has coefficient 1. Putting these together proves
the identity.
\end{proof}

By \eqref{Gamma_n},  the number of nodes of the cubature formula \eqref{cubaT}
is just one more than the lower bound \eqref{Moller}.  We can also write
\eqref{cubaT} into a form that is more explicit. Indeed, if $n = 2m$, then
\eqref{cubaT} can be written as
\begin{align} \label{cubaT:even}
 &\frac{1}{\pi^2} \int_{[-1,1]^2} f(t) W_0(t) dt \\
   & \qquad\qquad =   \frac{2}{n^2}
     \sideset{}{''}  \sum_{i=0}^{m} \sideset{}{''}\sum_{j=0}^m f(z_{2i},z_{2j}) +
     \frac{2}{n^2}
       \sum_{i=0}^{m-1} \sum_{j=0}^{m-1} f(z_{2i+1},z_{2j+1}), \notag
\end{align}
where $\sum''$ means that the first and the last terms in the
summation are halved. If $n = 2m+1$, then \eqref{cubaT} can be written as
\begin{align} \label{cubaT:odd}
 &\frac{1}{\pi^2} \int_{[-1,1]^2} f(t) W_0(t) dt \\
   & \qquad\qquad =   \frac{2}{n^2}
     \sideset{}{'}  \sum_{i=0}^{m} \sideset{}{'}\sum_{j=0}^m f(z_{2i},z_{2j}) +
     \frac{2}{n^2}
       \sideset{}{'}  \sum_{i=0}^{m} \sideset{}{'}\sum_{j=0}^{m}
            f(z_{n - 2i},z_{n-2j}), \notag
\end{align}
where $\sum'$ means that the first term in the sum is divided by 2.
The formula \eqref{cubaT:odd} appeared in \cite{X94}, where it was constructed
by considering the common zeros of orthogonal polynomials of two variables.

From the cubature formula \eqref{cuba1}, we can also derive cubature formulas
for the Chebyshev weight $W_1$ of the second kind.

\begin{thm} \label{thm:cubaU}
The cubature formula
\begin{equation} \label{cubaU}
 \frac{1}{\pi^2} \int_{[-1,1]^2} f(t) W_1(t) dt
        = \frac{2}{n^2} \sum_{k \in \Xi_n^\circ}
           \sin^2 \tfrac{k_1 \pi}{n}\sin^2 \tfrac{k_2 \pi}{n} f(z_{k_1},z_{k_2})
\end{equation}
is exact for $\Pi_{2n-5}^2$.
\end{thm}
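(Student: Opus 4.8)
The plan is to transport Theorem~\ref{thm:cuba1} to the weighted setting through the substitution $t_i = \cos 2\pi x_i$, exactly as in the proof of Theorem~\ref{thm:cubaT}, the only new feature being the factor that the weight $W_1$ contributes. First I would record the one‑variable identity $\int_{-1}^1 \phi(t)\sqrt{1-t^2}\,dt = \pi \int_{-1/2}^{1/2}\phi(\cos 2\pi x)\sin^2 2\pi x\,dx$, obtained by setting $t=\cos 2\pi x$ and using that the resulting integrand is even in $x$. Taking the product over the two variables then gives $\frac{1}{\pi^2}\int_{[-1,1]^2} f(t)W_1(t)\,dt = \int_\Omega h(x)\,dx$, where $h(x) := f(\cos 2\pi x_1,\cos 2\pi x_2)\,\sin^2 2\pi x_1\,\sin^2 2\pi x_2$, a function that is even in each variable and periodic with respect to $\ZZ^2$.

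The key step is a frequency count showing $h \in \CT_{2n-1}^*$ whenever $f \in \Pi_{2n-5}^2$. Indeed, $f(\cos 2\pi x_1,\cos 2\pi x_2)$ lies in $\CT_{2n-5}^{\mathrm{even}}$, i.e.\ it is a combination of $\cos 2\pi j_1 x_1 \cos 2\pi j_2 x_2$ with $j_1+j_2 \le 2n-5$; since $\sin^2 2\pi x_i = \tfrac12 - \tfrac12\cos 4\pi x_i$, multiplying by $\sin^2 2\pi x_i$ (a product‑to‑sum expansion) raises the frequency in the $i$‑th variable by at most $2$, hence the total degree by at most $4$. Thus $h$ lies in the span of $\cos 2\pi \nu_1 x_1 \cos 2\pi \nu_2 x_2$ with $\nu_1+\nu_2 \le 2n-1$, and since $|\nu_1 \pm \nu_2| \le \nu_1 + \nu_2$ for $\nu_i \ge 0$, this span is contained in $\CT_{2n-1}^*$. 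This is precisely why the exactness degree drops to $2n-5$: a degree $2n-4$ polynomial could already produce frequency $2n$, which escapes $\Lambda_{2n-1}^*$.

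Next I would apply the first cubature in \eqref{cuba1} to $h$ and fold the resulting sum. Since $h$ is even in each variable, the symmetry argument already used in the proof of Theorem~\ref{thm:cubaT} converts $\sum_{k\in X_n^*} c_k^{(n)} h(\tfrac{k}{2n})$ into $\sum_{k\in\Xi_n}\lambda_k^{(n)} h(\tfrac{k}{2n})$, where $h(\tfrac{(k_1,k_2)}{2n}) = f(z_{k_1},z_{k_2})\sin^2 \tfrac{k_1\pi}{n}\sin^2\tfrac{k_2\pi}{n}$. The terms indexed by $\Xi_n \setminus \Xi_n^\circ$ vanish, because there $z_{k_i} = \pm 1$ for some $i$, equivalently $\sin^2 \tfrac{k_i\pi}{n} = 0$; on $\Xi_n^\circ$ one has $\lambda_k^{(n)} = 4$. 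Equating $\sum_{k\in X_n^*} c_k^{(n)} h(\tfrac{k}{2n}) = 2n^2 \int_\Omega h(x)\,dx = \tfrac{2n^2}{\pi^2}\int_{[-1,1]^2} f(t) W_1(t)\,dt$ with $4\sum_{k\in\Xi_n^\circ} f(z_{k_1},z_{k_2})\sin^2\tfrac{k_1\pi}{n}\sin^2\tfrac{k_2\pi}{n}$ and dividing by $2n^2$ yields \eqref{cubaU}.

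The proof is essentially bookkeeping once the frequency count is in place; the main point to watch is the identification of the ``boundary'' of $\Xi_n$ that is killed by the $\sin^2$ factors with the non‑interior set $\Xi_n \setminus \Xi_n^\circ$, together with careful tracking of the normalizing constants. No genuinely new idea beyond the proof of Theorem~\ref{thm:cubaT} is needed; the essential input is that the extra factor $\sin^2 2\pi x_1 \sin^2 2\pi x_2$ simultaneously reproduces the Jacobian attached to $W_1$ and consumes exactly four degrees of exactness.
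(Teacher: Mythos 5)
Your proposal is correct and follows essentially the same route as the paper: transport the unweighted trigonometric cubature of Theorem~\ref{thm:cuba1} through $t_i=\cos 2\pi x_i$, absorb the Jacobian into the factor $\sin^2 2\pi x_1\sin^2 2\pi x_2$, verify by a frequency count that the resulting function lies in $\CT_{2n-1}^*$ (which is exactly where the loss of four degrees comes from), and observe that the $\sin^2$ factors annihilate the non-interior nodes. The only cosmetic difference is that the paper tests the basis $U_{k_1}(t_1)U_{k_2}(t_2)$ directly, writing the integrand as a product of four sines, whereas you handle a general $f\in\Pi_{2n-5}^2$ via the expansion $\sin^2 2\pi x_i=\tfrac12-\tfrac12\cos 4\pi x_i$; both yield the same degree count $2n-5+4=2n-1$.
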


\begin{proof}
We apply the first cubature formula in \eqref{cuba1} on the functions
$$
   \sin (2 \pi (k_1+1) x_1) \sin( 2\pi (k_2+1) x_2 ) \sin 2 \pi x_1 \sin 2\pi x_2
$$
for $0 \le k_1 + k_2 \le 2n-5$, where $t_1 = \cos 2 \pi x_1$ and
$t_2 = \cos 2 \pi x_2$ as in \eqref{t-x}. Clearly these functions are even in
both $x_1$ and $x_2$ and they are functions in $\CT_{2n-1}^*$. Furthermore,
they are zero when $x_1 =0$ or $x_2 =0$, or when $(x_1,x_2)$ are on the
boundary of $X_n^*$. Hence, the change of variables  \eqref{t-x} shows that
the first cubature in \eqref{cuba1} becomes \eqref{cubaU} for $U_{k_1}(t_1)
U_{k_2}(t_2)$.
\end{proof}

A simple counting shows that $|\Xi_n^\circ| =\lfloor \frac{n}{2}\rfloor^2
+ \lfloor \frac{n-1}{2}\rfloor^2 = \frac{(n-1)(n-2)}{2} + \lfloor \frac{n}{2}\rfloor$. 
The number of nodes of the cubature formula \eqref{cubaU}
is also one more than the lower bound \eqref{Moller}.
In this case, this formula appeared already in \cite{MP}.

%%%%%%%%%%%%%%%%%%%%%
%%%      algebraic interpolation    %%%
%%%%%%%%%%%%%%%%%%%%%

\subsection{Interpolation by polynomials}

As shown in \cite{LSX}, there is a close relation between interpolation and
discrete Fourier transform. We start with a simple result on interpolation by
trigonometric functions in $\CT_n$.

\begin{prop} \label{prop:3.4}
For $n \ge 1$ define
\begin{equation}\label{CI_n}
    I_n f(x) : = \sum_{k \in X_n} f(\tfrac{k}{2n}) \Phi_n(x- \tfrac{k}{2n}),
        \qquad
         \Phi_n(x) := \frac{1}{2n^2} \sum_{\nu \in \Lambda_n} \e_\nu(x).
\end{equation}
Then $I_n f(\tfrac{k}{2n}) = f(\tfrac{k}{2n})$ for all $k \in X_n$.
\end{prop}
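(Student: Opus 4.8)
\emph{Proof plan.} Since $I_nf(\tfrac{m}{2n})=\sum_{k\in X_n}f(\tfrac{k}{2n})\,\Phi_n\!\big(\tfrac{m}{2n}-\tfrac{k}{2n}\big)$, everything reduces to proving that $\Phi_n$ is a cardinal (delta) function on the node set, i.e.\ that $\Phi_n\!\big(\tfrac{m}{2n}-\tfrac{k}{2n}\big)=\delta_{k,m}$ for $k,m\in X_n$. The quickest route is to notice that $I_n$ is nothing but the operator $\CI_B$ of \thmref{thm:interpolation} for the matrix $B$ fixed in this subsection: by the change of variables \eqref{j-k} one has $\Lambda_B=\Lambda_B^{\dag}=\Lambda_n$ and the sample points $B^{-\tr}j$ ($j\in\Lambda_B$) are exactly the points $\tfrac{k}{2n}$ ($k\in X_n$), while comparing \eqref{ell} with the definition of $\Phi_n$ in \eqref{CI_n} (and using $|\det B|=2n^2$) shows $\Psi_{\Omega_B}=\Phi_n$; hence $I_nf=\CI_Bf$ and the Proposition is \thmref{thm:interpolation} specialized to this $B$.

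If a self-contained argument is preferred, I would instead evaluate $\Phi_n$ at the difference of two nodes directly. Writing $\tfrac{k}{2n}=B^{-\tr}j_k$ and $\tfrac{m}{2n}=B^{-\tr}j_m$ with $j_k,j_m\in\Lambda_n$ (again \eqref{j-k}), one gets
$$
\Phi_n\!\Big(\tfrac{m}{2n}-\tfrac{k}{2n}\Big)=\frac{1}{2n^2}\sum_{\nu\in\Lambda_n}\e^{2\pi i\,\nu^{\tr}B^{-\tr}(j_m-j_k)},
$$
which is precisely the sum appearing in \eqref{d-ortho2} with $\Lambda_B^{\dag}=\Lambda_n$ and with $j$ there replaced by $-(j_m-j_k)$; hence it equals $1$ if $j_m-j_k\equiv0\bmod B^{\tr}$ and $0$ otherwise. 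To convert this congruence condition into $j_m=j_k$ I would use the special shape of $B$: since $B^{\tr}=B\left[\begin{matrix}0&-1\\1&0\end{matrix}\right]$ and the second factor is unimodular, $B^{\tr}\ZZ^2=B\ZZ^2$; thus $j_m-j_k\equiv0\bmod B^{\tr}$ is the same as $j_m-j_k\in B\ZZ^2$, and since $j_k,j_m$ both lie in the half-open fundamental domain $\Omega_B$ of $B\ZZ^2$ this forces $j_m=j_k$, i.e.\ $m=k$. Either way, $\Phi_n\!\big(\tfrac{m}{2n}-\tfrac{k}{2n}\big)=\delta_{k,m}$, which is the claim.

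No step here is genuinely hard; the only things needing care are bookkeeping. In the first route one must keep straight the identifications $\Lambda_B=\Lambda_B^{\dag}=\Lambda_n\leftrightarrow X_n$ coming from \eqref{j-k} and that the two explicit kernels $\Psi_{\Omega_B}$ and $\Phi_n$ really coincide. In the second route the one substantive point is the identity $B^{\tr}\ZZ^2=B\ZZ^2$: it is what guarantees that the $2n^2$ distinct nodes are pairwise inequivalent modulo $B^{\tr}$, and it relies on $B$ being $n$ times a scaled rotation, so that $B$ and $B^{\tr}$ differ by a unimodular rotation. Everything else is the orthogonality relation \eqref{d-ortho2} together with the tiling property of $\Omega_B$.
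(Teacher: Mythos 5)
Your proposal is correct and your second, self-contained route is essentially the paper's own proof: the paper rewrites $I_nf$ as a sum over $j\in\Lambda_n$ via \eqref{j-k} and evaluates $\Phi_n(B^{-\tr}(j-k))=\delta_{k,j}$ by \eqref{d-ortho2} (it simply leaves implicit the step you spell out, that the congruence modulo $B^{\tr}$ forces equality for two indices in the fundamental domain). Your first route, reading the statement off as \thmref{thm:interpolation} with $\Psi_{\Omega_B}=\Phi_n$, is equally valid and is in fact exactly how the paper proves the three-dimensional analogue (Theorem \ref{thm:interp3D}).
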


\begin{proof}
For $j \in \Lambda_n$ define $k = 2n B^{-\tr} j$. From the relation \eqref{j-k},
$j \in \Lambda_n$ is equivalent to $k \in X_n$ with $k=2n B^{-\tr}j$.
As a result, we can write $I_n f(x)$ as
$$
   I_n f(x) =  \sum_{j \in \Lambda_n} f(B^{-\tr} j ) \Phi_n (x- B^{-\tr} j)
$$
and the interpolation means $I_n f(B^{-\tr}j ) = f(B^{-\tr} j)$ for $j \in \Lambda_n$.
For $k,j \in \Lambda_n$,
\begin{align*}
   \Phi_n(B^{-\tr}(j -k)) & = \frac{1}{2n^2}\sum_{\nu \in \Lambda_n} \e_\nu(B^{-\tr}(j-k))
   = \delta_{k,j}
\end{align*}
by \eqref{d-ortho2}.
\end{proof}

For our main result, we need a lemma on the symmetric set $X_n^*$ and
$\Lambda_n^*$. Recall that $c_k^{(n)}$ is defined for $k \in X_n^*$. Since
the relation \eqref{j-k} clearly extends to
\begin{equation} \label{j-k*}
j\in \Lambda_n^*  \qquad \Longleftrightarrow \qquad k = 2n B^{-\tr} j \in X_n^*,
\end{equation}
we define $\widetilde c_j^{(n)} = c_k^{(n)}$ whenever $k$ and $j$ are so
related. Comparing to \eqref{CI_n}, we then define
\begin{equation}\label{CI_n*}
    I_n^* f(x) : = \sum_{k \in X_n^*} f(\tfrac{k}{2n}) \Phi_n^*(x- \tfrac{k}{2n}),
        \quad \hbox{where} \quad
   \Phi_n^*(x) := \frac{1}{2n^2} \sum_{\nu \in \Lambda_n^*}
                                 \wt c_\nu^{(n)} \e_\nu(x).
\end{equation}
We also introduce the following notation: for $k \in X_n^e$, we denote
by $k'$ the point on the opposite edge of $X_n^*$; that is, $k' \in
X_n^e$ and $k'  = k \pm (2n,0)$ or $k' = k \pm (0,2n)$. Furthermore, we
denote by $j'$ the index corresponding to $k'$ under \eqref{j-k*}.

\begin{lem} \label{prop:3.5}
The function $I_n^* f \in \CT_n^*$ satisfies
$$
I_n^* f(\tfrac{k}{2n}) = \begin{cases} f(\tfrac{k}{2n}), & k \in X_n^\circ, \\
    f(\tfrac{k}{2n}) +  f(\tfrac{k'}{2n}), & k \in X_n^e, \\
     f(\tfrac{k}{2n})+ f(\tfrac{(-k_1,k_2)}{2n})+f(\tfrac{(k_1,-k_2)}{2n})+f( \tfrac{-k}{2n}),
      & k \in X_n^v.
\end{cases}
$$
\end{lem}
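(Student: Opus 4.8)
The plan is to evaluate $I_n^* f$ at a node $\tfrac{k}{2n}$ with $k \in X_n^*$ by substituting the definition \eqref{CI_n*} and analyzing the kernel values $\Phi_n^*\bigl(\tfrac{k-m}{2n}\bigr)$ for $m \in X_n^*$. First I would pass, via \eqref{j-k*}, from the sum over $X_n^*$ to a sum over $\Lambda_n^*$, writing $I_n^* f(B^{-\tr}j) = \sum_{i \in \Lambda_n^*} f(B^{-\tr}i)\,\Phi_n^*\bigl(B^{-\tr}(j-i)\bigr)$, so that the arithmetic happens on the lattice side. The core computation is then the kernel identity
\begin{equation*}
  \Phi_n^*\bigl(B^{-\tr}(j-i)\bigr) = \frac{1}{2n^2}\sum_{\nu \in \Lambda_n^*}
     \wt c_\nu^{(n)} \e^{2\pi i\,\nu^\tr B^{-\tr}(j-i)},
\end{equation*}
which I would evaluate using the orthogonality relation \eqref{d-ortho2}. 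The essential point is that $\wt c_\nu^{(n)}$ is exactly the reciprocal of the number of lattice points congruent to $\nu \bmod B^\tr$ inside the closed symmetric set $\Lambda_n^*$: an interior point has no other congruent partner in $\Lambda_n^*$, an edge point $\nu$ has exactly one partner $\nu \pm B^\tr(\text{unit})$ on the opposite edge (weight $\tfrac12$ each), and a vertex has four mutually congruent partners (weight $\tfrac14$ each). Hence $\Phi_n^*$ is the symmetrization of $\Phi_n$ over these congruence classes, and summing $\e^{2\pi i \nu^\tr B^{-\tr} m}$ over a full set of representatives of $\ZZ^2/B^\tr\ZZ^2$ — which $\Lambda_n^*$ with weights $\wt c^{(n)}$ provides, counted with multiplicity one — gives $\tfrac{1}{2n^2}$ times the characteristic function of $\{m \equiv 0 \bmod B\}$ by \eqref{d-ortho2}.

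Next I would translate the condition $B^{-\tr}(j-i) \equiv 0 \bmod B$, i.e. $j - i \in B^\tr\ZZ^2$, back through the change of variables $k = 2nB^{-\tr}j$, $m = 2nB^{-\tr}i$ into a condition on $X_n^*$. Since $2nB^{-\tr} = \begin{bmatrix} 1 & -1 \\ 1 & 1\end{bmatrix}$ is (up to the factor $2n$) its own inverse transpose type, $j \equiv i \bmod B^\tr$ corresponds to $k$ and $m$ differing by an element of $2n\ZZ^2$; combined with $k, m \in X_n^*$ this forces $m = k$ when $k \in X_n^\circ$, allows $m \in \{k, k'\}$ when $k \in X_n^e$ (where $k' = k \pm (2n,0)$ or $k \pm(0,2n)$ is the point on the opposite edge, as defined before the lemma), and allows $m$ to range over the four vertices $\{(\pm k_1, \pm k_2)\}$ when $k \in X_n^v$ — here I would use that $X_n^*$ is defined by the \emph{closed} inequalities, so all four vertices genuinely lie in $X_n^*$. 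Picking up $f$ at each surviving node $m$ with coefficient $\tfrac{1}{2n^2}\cdot 2n^2 = 1$ yields precisely the three cases in the statement. Finally, $I_n^* f \in \CT_n^*$ is immediate since each $\Phi_n^*$ is a linear combination of $\e_\nu$ with $\nu \in \Lambda_n^*$.

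The main obstacle I anticipate is the bookkeeping of the congruence classes on the boundary: one must verify carefully that the weights $\wt c_\nu^{(n)} = c_k^{(n)} \in \{1,\tfrac12,\tfrac14\}$ assigned in \eqref{eq:inner2}–\eqref{CI_n*} are \emph{exactly} $1/|{\text{class of }\nu \text{ in }\Lambda_n^*}|$, so that the weighted sum over $\Lambda_n^*$ is a genuine sum over $\ZZ^2/B^\tr\ZZ^2$ with each class counted once; and dually, that when $k$ itself lies on an edge or vertex, the node $m$ runs over \emph{all} of $X_n^\circ$-type partners of $k$ without over- or under-counting. A clean way to organize this is to prove the single kernel identity $\sum_{m \in X_n^*} \wt c_m^{(n)}\, h(m) = \sum_{m \in X_n^\circ \cup X_n^{e,1} \cup \{(-n,-n)\}} h(m)$ for any $2n\ZZ^2$-periodic $h$ (the same reduction already used in the proof of Theorem \ref{inner2}) and then apply \eqref{d-ortho1}–\eqref{d-ortho2} on the reduced index set, which makes the multiplicities transparent.
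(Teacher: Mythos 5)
Your proposal is correct and follows essentially the same route as the paper: rewrite $I_n^*f$ over $\Lambda_n^*$ via \eqref{j-k*}, collapse the weighted kernel sum over $\Lambda_n^*$ to the unweighted sum over $\Lambda_n$ because $\wt c_\nu^{(n)}$ is the reciprocal of the size of the congruence class of $\nu$ in $\Lambda_n^*$ (the same argument as in Theorem~\ref{inner2}), apply \eqref{d-ortho2}, and read off that $\Phi_n^*(\tfrac{j-k}{2n})$ is the indicator of $j\equiv k \bmod 2n\ZZ^2$, which gives the three cases by counting congruent partners in $X_n^*$. The only cosmetic difference is your phrasing of the congruence as $\bmod\, B^\tr$ versus the paper's $\bmod\, B$, which is immaterial here since $B\ZZ^2=B^\tr\ZZ^2$ for this choice of $B$.
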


\begin{proof}
As in the proof of the previous theorem, we can write $I^*_n f$ as
$$
   I_n^* f(x) =  \sum_{j \in \Lambda_n^*} f(B^{-\tr} j ) \Phi_n^* (x- B^{-\tr} j)
$$
by using \eqref{j-k*}. Let $S_k(x) = \Phi^*_n(B^{-\tr}j)$.
For all $k, j \in \Lambda^*_n$,
$$
   S_k(B^{-\tr} j ) = \frac{1}{2n^2} \sum_{\nu \in \Lambda_n^*}
        \wt c_\nu^{(n)} \e_{\nu}(B^{-\tr}(j-k)).
$$
Since $\e_{\nu}(B^{-\tr}j) = \e_{\mu}(B^{-\tr}j) $ for any $\mu\equiv \nu \mod B$,
we derive by using a similar argument as in Theorem \ref{inner2} that
$$
   S_k(B^{-\tr} j ) = \frac{1}{2n^2} \sum_{\nu \in \Lambda_n}
         \e_{\nu}(B^{-\tr}(j-k)).
$$
By \eqref{d-ortho2},
$S_k(B^{-\tr} j)  = \delta_{k,j}$
if $k, j \in \Lambda_n$. If $j \in \Lambda_n^* \setminus \Lambda_n$
then $j' \in \Lambda_n$, so that if $k \in \Lambda_n$ then $S_k(B^{-tr}j)
= \delta_{k, j'}$. The same holds for the case of
$j\in \Lambda_n$ and $k \in  \Lambda_n^* \setminus \Lambda_n$.
 If both $k, j \in \Lambda_n^* \setminus \Lambda_n$,
then  $S_k(B^{-\tr} j) =  \delta_{k', j'}$. Using the
relation \eqref{j-k*}, we have shown that $\Phi_n^*(\frac{j-k}{2n}) = 1$
when $k \equiv j \mod 2 n \ZZ^2$ and 0 otherwise, from which the stated
result follows.
\end{proof}

It turns out that the function $\Phi_n^*$ satisfies a compact formula.
 Let us define an operator
$\CP$ by
$$
(\CP f) (x) =\frac{1}{4} \left[ f(x_1,x_2) + f(-x_1,x_2) + f(x_1, -x_2) + f(-x_1,-x_2)\right].
$$
For $\e_k(x) = \e^{2 \pi i k\cdot x}$, it follows immediately that
\begin{equation} \label{Pe_k}
(\CP \e_k)(x) =  \cos (2\pi k_1 x_1)\cos (2\pi k_2 x_2)
    \qquad  \hbox{forall $k \in \ZZ^2$}.
\end{equation}

\begin{lem} \label{lem:D_n}
For $n \ge 0$,
\begin{equation} \label{Phi*formula}
 \Phi_n^*(x) = 2 \left[ D_n(x) + D_{n-1}(x) \right] -
    \frac{1}{4} (\cos 2\pi n x_1 + \cos 2\pi n x_2 ),
\end{equation}
where
\begin{equation}\label{D_n}
   D_n(x) :=  \frac14 \sum_{\nu \in \Lambda_n^*} \e_\nu(x)
               =   \frac12 \frac{\cos \pi (2n+1) x_1\cos \pi x_1-
         \cos \pi (2n+1)x_2\cos \pi x_2} {\cos2\pi x_1 - \cos2\pi x_2}.
\end{equation}
\end{lem}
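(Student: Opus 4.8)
The plan is to prove \eqref{D_n} first and then assemble \eqref{Phi*formula} from it. For the closed form of $D_n$, I would start from the definition $D_n(x)=\frac14\sum_{\nu\in\Lambda_n^*}\e_\nu(x)$ and exploit the fact that $\Lambda_n^*$ is symmetric under the sign changes $(\nu_1,\nu_2)\mapsto(\pm\nu_1,\pm\nu_2)$. Applying the operator $\CP$ and using \eqref{Pe_k}, one gets $D_n(x)=\sum_{\nu\in\Lambda_n^*,\,\nu\ge 0}{}'\cos(2\pi\nu_1x_1)\cos(2\pi\nu_2x_2)$ with suitable boundary weights, where the index set $\{\nu\in\Lambda_n^*:\nu_1,\nu_2\ge0\}$ is $\{(\nu_1,\nu_2):0\le\nu_1+\nu_2\le n\}$ — a triangular array. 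The key computation is then a two-dimensional Dirichlet-kernel sum over this triangle. The cleanest route is to sum along diagonals $\nu_1+\nu_2=m$: the inner sum $\sum_{j=0}^m\cos(2\pi j x_1)\cos(2\pi(m-j)x_2)$ is a standard product-to-sum telescoping that yields a combination of $\frac{\sin\pi(m+1)(x_1\pm x_2)}{\sin\pi(x_1\pm x_2)}$-type terms, and then summing over $0\le m\le n$ produces a second telescoping. After combining the $x_1+x_2$ and $x_1-x_2$ contributions over a common denominator $\cos2\pi x_1-\cos2\pi x_2=-2\sin\pi(x_1+x_2)\sin\pi(x_1-x_2)$, one should land exactly on the right-hand side of \eqref{D_n}. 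Alternatively, and perhaps more efficiently, one can verify \eqref{D_n} by checking that the claimed closed form, call it $R_n(x)$, satisfies $R_0(x)=\frac14$ (the single term $\nu=0$) and the recurrence $R_n(x)-R_{n-1}(x)=\frac14\sum_{\nu_1+\nu_2=n}\e_\nu(x)\big|_{\text{symmetrized}}$; both checks reduce to trigonometric identities in one and two variables.

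Granting \eqref{D_n}, the proof of \eqref{Phi*formula} is bookkeeping on the weights $\wt c_\nu^{(n)}$ in the definition \eqref{CI_n*} of $\Phi_n^*$. Write $\Phi_n^*(x)=\frac1{2n^2}\sum_{\nu\in\Lambda_n^*}\wt c_\nu^{(n)}\e_\nu(x)$; wait — one must be careful, since the normalization in \eqref{CI_n*} has $\frac1{2n^2}$ but $\wt c_\nu^{(n)}\in\{1,\tfrac12,\tfrac14\}$ is decoupled from the factor $\frac14$ in $D_n$. I would instead observe that $\Lambda_n^*$ splits as the interior points $\Lambda_n^\circ$ (weight $1$), edge points $\Lambda_n^e$ (weight $\tfrac12$), and vertex points $\Lambda_n^v$ (weight $\tfrac14$), where under \eqref{j-k*} these correspond to $X_n^\circ,X_n^e,X_n^v$ and hence to the conditions $\nu_1+\nu_2$ and $\nu_2-\nu_1$ lying strictly inside, on one edge, or at a corner of $[-n,n]^2$. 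The combination $D_n(x)+D_{n-1}(x)$ picks out all of $\Lambda_n^*$ with the interior points of $\Lambda_n^*$ getting the full coefficient while the points on $|\nu_1+\nu_2|=n$ or $|\nu_2-\nu_1|=n$ appear only once (in $D_n$, not in $D_{n-1}$); multiplying by $2$ and subtracting the correction term $\frac14(\cos2\pi nx_1+\cos2\pi nx_2)$ — which is exactly $\CP\e_{(n,0)}+\CP\e_{(0,n)}$ evaluated appropriately — adjusts the corner/edge weights from their over- or under-counted values down to $\tfrac12$ and $\tfrac14$. I would make this precise by writing $\Phi_n^*(x)$ as $\sum_{\nu\in\Lambda_n^*}\wt c_\nu^{(n)}\,(\CP\e_\nu)(x)$ (the sign symmetry of the weighted set lets us replace $\e_\nu$ by $\CP\e_\nu=\cos2\pi\nu_1x_1\cos2\pi\nu_2x_2$ up to the overall constant), and comparing coefficient-by-coefficient of each $\cos2\pi\nu_1x_1\cos2\pi\nu_2x_2$ on both sides of \eqref{Phi*formula}.

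The main obstacle is the first step: evaluating the double Dirichlet-type sum $\sum_{0\le\nu_1+\nu_2\le n}\cos2\pi\nu_1x_1\cos2\pi\nu_2x_2$ (with the correct halved boundary weights) in closed form and recognizing the result as the quotient in \eqref{D_n}. The denominator $\cos2\pi x_1-\cos2\pi x_2$ signals that the natural variables are $x_1+x_2$ and $x_1-x_2$, and the cleanest path is probably to substitute $u=x_1+x_2$, $v=x_1-x_2$, so that $\nu_1x_1+\nu_2x_2$ and its sign-variants become linear in $u,v$; the triangular sum then factors into two one-dimensional geometric sums whose ratio gives the stated formula. I would keep track of the boundary weights carefully here, since an off-by-one on the diagonal $\nu_1+\nu_2=n$ is exactly what the correction term $-\frac14(\cos2\pi nx_1+\cos2\pi nx_2)$ in \eqref{Phi*formula} is designed to absorb, so the two computations must be done consistently. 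Everything after the closed form for $D_n$ is elementary: combining $D_n+D_{n-1}$, checking the coefficient of each frequency, and verifying the small cases $n=0$ (where $\Lambda_0^*=\{0\}$ and both sides equal $\frac14$) and $n=1$ directly.
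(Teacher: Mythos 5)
Your proposal is correct in outline, and for the identity \eqref{Phi*formula} it follows exactly the paper's own route: the paper also obtains that formula by pure bookkeeping, observing that the interior points of $\Lambda_n^*$ are precisely $\Lambda_{n-1}^*$ and so receive coefficient $\tfrac12+\tfrac12=1$ from $2[D_n+D_{n-1}]$, that the remaining boundary points receive $\tfrac12$ (already the right value — only the four vertices $(\pm n,0),(0,\pm n)$ need correcting, not the edges as your phrasing suggests), and then subtracting the vertex contribution. Your insistence on a coefficient-by-coefficient check and your worry about normalization are well placed: carried out literally, reducing the vertex weight from $\tfrac12$ to $\tfrac14$ requires subtracting $\tfrac14\sum_{\nu\in\Lambda_n^v}\e_\nu(x)=\tfrac12(\cos2\pi nx_1+\cos2\pi nx_2)$, twice the constant displayed in \eqref{Phi*formula}, and the overall factor $\tfrac1{2n^2}$ of \eqref{CI_n*} is suppressed throughout; your method surfaces both discrepancies, whereas the paper's two-line argument glosses over them. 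Where you genuinely diverge is \eqref{D_n}: the paper does not derive the closed form at all — after using $\CP$ and \eqref{Pe_k} to rewrite $D_n$ as the triangular sum $\sum_{0\le j_1+j_2\le n}\cos2\pi j_1x_1\cos2\pi j_2x_2$ with halved boundary terms, it simply cites formulas (4.2.1) and (4.2.7) of \cite{X95}. Your plan to prove this from scratch is sound and makes the lemma self-contained: either the diagonal product-to-sum telescoping, or (cleaner) the substitution $k_1=j_1+j_2$, $k_2=j_2-j_1$ together with $u=x_1+x_2$, $v=x_1-x_2$, under which $\Lambda_n^*$ becomes the square $\{|k_1|,|k_2|\le n,\ k_1\equiv k_2 \bmod 2\}$ and the sum splits into two products of one-variable Dirichlet-type kernels that combine over the denominator $\cos2\pi x_1-\cos2\pi x_2=-2\sin\pi(x_1+x_2)\sin\pi(x_1-x_2)$ (a sum of products, not a ratio, but the idea is right). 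The cost is a page of trigonometric bookkeeping that the paper outsources; the recurrence-plus-base-case variant you mention is probably the least error-prone way to actually write it down.
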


\begin{proof}
Using the values of $\wt c_\nu^{(n)}$ and the definition of $D_n$, it is easy
to see that
$$
   \Phi_n^*(x) =  2 \left[ D_n(x) + D_{n-1}(x) \right] -
               \sum_{\nu \in \Lambda^v} \e_v(x).
$$
Since $\Lambda_n^v$ contains four terms, $(\pm n, 0)$ and $(0,\pm n)$,
the sum over $\Lambda_n^v$ becomes the second term in \eqref{Phi*formula}.
On the other hand, using the symmetry of $\Lambda_n^*$ and \eqref{Pe_k},
$$
  D_n (x) =  \frac14 \sum_{\nu \in \Lambda_n^*} (\CP \e_\nu)(x) =
              \sideset{}{'} \sum_{0 \le j_1+j_2 \le n} \cos 2\pi j_1 x_1 \cos 2\pi j_2 x_2,
$$
where $\sum'$ means that the terms in the sum are halved whenever either
$j_1 = 0$ or $j_2 =0$, from which the second equal sign in \eqref{D_n}
follows from \cite[(4.2.1) and (4.2.7)]{X95}.
\end{proof}

Our main result in this section is interpolation over points in $\{\frac{k}{2n}:
k \in \Xi_n\}$ with $\Xi_n$ defined in \eqref{Xi}.

\begin{thm}
For $n \ge 0$ define
$$
  \CL_n f(x) = \sum_{k \in \Xi_n} f(\tfrac{k}{2n}) \ell_k(x), \qquad
      \ell_k(x): = \lambda_k^{(n)} \CP \left[\Phi_n^*(\cdot - \tfrac{k}{2n})\right](x)
$$
with $\l_k^{(n)}$ given in \eqref{cubaT}. Then $\CL_n f \in \CT_n$ is even in both variables and it satisfies
$$
  \CL_n f(\tfrac{j}{2n}) = f(\tfrac{j}{2n})  \qquad \hbox{for all} \quad j \in \Xi_n.
$$
\end{thm}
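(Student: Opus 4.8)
The plan is to reduce the interpolation claim over the symmetric node set $\{\frac{k}{2n}: k\in\Xi_n\}$ to the already-established interpolation behaviour of $I_n^*$ on the full set $X_n^*$, exploiting the fact that $\CP$ symmetrises both the node values and the basis functions. First I would check that $\CL_n f\in\CT_n$: since $\Phi_n^*\in\CT_n^*$ and $\CP$ maps each $\e_\nu$ to $\cos 2\pi\nu_1 x_1\cos 2\pi\nu_2 x_2$ by \eqref{Pe_k}, applying $\CP$ to $\Phi_n^*(\cdot-\tfrac{k}{2n})$ produces an element of $\CT_n^{\rm even}\subset\CT_n$, which is manifestly even in both variables; hence so is $\CL_n f$.

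The heart of the argument is evaluating $\CL_n f$ at a node $\tfrac{j}{2n}$ with $j\in\Xi_n$. The key observation is that for $g$ even in each variable one has $\sum_{k\in X_n^*}c_k^{(n)}g(\tfrac{k}{2n})=\sum_{k\in\Xi_n}\lambda_k^{(n)}g(\tfrac{k}{2n})/4$ — this is precisely the folding identity proved inside \thmref{thm:cubaT}, relating the weights $c_k^{(n)}$ on $X_n^*$ to the weights $\lambda_k^{(n)}$ on the positive quadrant $\Xi_n$ via the orbit sizes under $k\mapsto(\pm k_1,\pm k_2)$. I would use this to rewrite
$$
  \CL_n f(\tfrac{j}{2n}) = \sum_{k\in\Xi_n}\lambda_k^{(n)} f(\tfrac{k}{2n})\,\CP\!\left[\Phi_n^*(\cdot-\tfrac{k}{2n})\right](\tfrac{j}{2n})
$$
and expand the outer $\CP$ acting on $x=\tfrac{j}{2n}$, so that each summand becomes $\tfrac14$ times a sum of four values $\Phi_n^*\big(\tfrac{\varepsilon j - k}{2n}\big)$ with $\varepsilon$ ranging over the sign flips $(\pm1,\pm1)$. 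By \lemref{prop:3.5} (more precisely its final displayed consequence, that $\Phi_n^*(\tfrac{j-k}{2n})=1$ when $j\equiv k\bmod 2n\ZZ^2$ and $0$ otherwise), together with the evenness $\Phi_n^*(-x)=\Phi_n^*(x)$, each $\Phi_n^*\big(\tfrac{\varepsilon j-k}{2n}\big)$ equals $1$ exactly when $k\equiv\varepsilon j\bmod 2n\ZZ^2$. Running $k$ over $\Xi_n$ and $j\in\Xi_n$, I would then count: for $j\in\Xi_n^\circ$ (all coordinates strictly between $0$ and $n$) only $k=j$ contributes, and only for $\varepsilon=(1,1)$, giving coefficient $\tfrac14\lambda_j^{(n)}=\tfrac14\cdot4=1$; for $j$ on an edge or corner the extra sign-flip images $\varepsilon j$ are themselves congruent mod $2n\ZZ^2$ to points of $\Xi_n$ (the coordinate $0$ and $n$ are fixed mod $2n$ under sign flip up to the identification $n\equiv -n$), so the multiplicity of hits rises in exact proportion to the drop in $\lambda_j^{(n)}$, again yielding net coefficient $1$ on $f(\tfrac{j}{2n})$ and $0$ on all other $f(\tfrac{k}{2n})$.

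I expect the bookkeeping on the boundary and corners of $\Xi_n$ to be the main obstacle: one must verify carefully that when $j$ has a coordinate equal to $0$ or $n$, the sign-flipped copies $\varepsilon j$ land back inside the orbit structure encoded by $\lambda^{(n)}$ in a way that makes $\tfrac14\lambda_j^{(n)}\cdot(\text{number of }\varepsilon\text{ with }\varepsilon j\equiv j)=1$, while no congruence $\varepsilon j\equiv k$ with $k\ne j$ in $\Xi_n$ occurs — here the half-open/symmetric distinction between $X_n$, $X_n^*$ and $\Xi_n$, and the parity split in the definition \eqref{Xi} of $\Xi_n$ (even versus odd lattice points), have to be tracked simultaneously. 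An alternative, possibly cleaner, route that avoids some of this casework is to define $g(x):=\CP[I_n^* f](x)$ directly, observe via $\CP^2=\CP$ and the linearity of $I_n^*$ that $\CP[\,\Phi_n^*(\cdot-\tfrac{k}{2n})\,]=\CP[\,\Phi_n^*(\cdot-\tfrac{\sigma k}{2n})\,]$ for any sign flip $\sigma$, regroup the $X_n^*$-sum defining $I_n^* f$ into $\Xi_n$-orbits to recover exactly $\CL_n f$ (using the weight identity above), and then read off the interpolation values of $\CL_n f=\CP[I_n^*f]$ at $\tfrac{j}{2n}$ from \lemref{prop:3.5} by averaging its four cases under $\CP$ — the off-diagonal terms $f(\tfrac{k'}{2n})$ and the corner terms cancel against the symmetrisation precisely because $j\in\Xi_n$ forces $\tfrac{k}{2n}$ and its flips to coincide as evaluation points of the even function $f\circ(\cos2\pi\,\cdot)$ is not needed — what is needed is only that $f$ here is an arbitrary given function and the cancellation is formal in the values $f(\tfrac{k}{2n})$. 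I would present whichever of these two organisations produces the shorter case analysis, most likely the second.
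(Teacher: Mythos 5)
Your first route is essentially the paper's own proof: both rest on the fact, taken from the proof of \lemref{prop:3.5}, that $R_k(x)=\Phi_n^*(x-\tfrac{k}{2n})$ equals $1$ at $\tfrac{j}{2n}$ exactly when $j\equiv k \pmod{2n\ZZ^2}$ and $0$ otherwise, then expand $\CP$ into the four sign-flips $\varepsilon j$ and check case by case that the number of flips landing in the class of $k$ is exactly $4/\lambda_k^{(n)}$ when $k=j$ and $0$ when $k\ne j$ in $\Xi_n$. (The folding identity you quote from the proof of \thmref{thm:cubaT} is misstated --- there is no factor $1/4$ --- but you never actually use it, so this does not affect the argument.)
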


\begin{proof}
As shown in the proof of Proposition \ref{prop:3.5}, $R_k(x):=
\Phi_n^*(x- \tfrac{k}{2n})$ satisfies $R_k(\tfrac{j}{2n}) = 1$  when
$k \equiv j \mod 2n \ZZ^2$ and 0 otherwise. Hence, if $j \in \Xi_n^\circ$
 then
$(\CP R_k)(\tfrac{j}{2n}) = \frac{1}{4} R_k(\tfrac{j}{2n}) = [\l_k^{(n)}]^{-1} \d_{k,j}$.
If $j \in \Xi_n^e$ then the number of terms in the sum of $(\CP R_k)(\tfrac{j}{2n})$
depends on whether $j_1 j_2$ is zero; if $j_1j_2 \ne 0$ then
$(\CP R_k)(\tfrac{j}{2n}) = \frac{1}{4} \left[R_k(\tfrac{j}{2n})+R_k(\tfrac{j'}{2n})\right]
 = \frac12 \d_{k,j}= [\l_k^{(n)}]^{-1} \d_{k,j}$, whereas if $j_1j_2 =0$ then
$(\CP R_k)(\tfrac{j}{2n}) = \frac{1}{2} R_k(\tfrac{j}{2n}) =[\l_k^{(n)}]^{-1} \d_{k,j}$.
For $j = (n,0)$ or $(0,n)$ in $\Xi_n^v$, we have
$(\CP R_k)(\tfrac{j}{2n})= \frac{1}{2} \left[R_k(\tfrac{j}{2n})+R_k(\tfrac{j'}{2n})\right]
 = \d_{k,j}$; for $j = (n,n) \in \Xi_n^v$ we have
$(\CP R_k)(\tfrac{j}{2n})= \frac{1}{4} \left[R_k(\tfrac{(n,n)}{2n})+R_k(\tfrac{(-n,n)}{2n})+R_k(\tfrac{(n,-n)}{2n})
+R_k(\tfrac{(-n,-n)}{2n})\right]
 = \d_{k,j}$;  finally for $j =0 \in \Xi_n^v$, it is evident that $(\CP R_k)(0) =
 \delta_{k,0}$. Putting these together, we have verified that $\ell_k(\frac{j}{2n})
 = \d_{k,j}$ for all $j,k \in \Xi_n^*$, which verifies the interpolation of $\CL_n f$.
\end{proof}

As in the case of cubature, we can translate the above theorem to interpolation
by algebraic polynomials by applying the change of variables \eqref{t-x}. Recall
$\Gamma_n$ defined in \eqref{G_n}.

\begin{thm}
For $n \ge 0$, let
$$
   \CL_n f (t) = \sum_{z_k \in \Gamma_n} f(z_k) \ell^*_k(t),
         \qquad
         \ell_k^*(t) = \ell_k(x) \quad\hbox{with}\quad  t_i = \cos 2 \pi x_i,\ i=1,2.
$$
Then $\CL_n f \in \Pi_n^2$ and it satisfies $\CL_n f(z_k) = f(z_k)$ for all
$z_k \in \Gamma_n$. Furthermore, under the change of variables \eqref{t-x},
the fundamental polynomial $\ell_k^*(t)$ satisfies
$$
 \ell_k^*(t) = \frac12\CP \left[D_n(\cdot -\tfrac{k}{2n})
                +D_{n-1}(\cdot -\tfrac{k}{2n}) \right](x)
     - \frac{1}{4} \left[(-1)^{k_1} T_{k_1}(t_1)+(-1)^{k_2} T_{k_2}(t_2) \right].
$$
\end{thm}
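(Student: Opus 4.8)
The plan is to obtain all three assertions by transporting the trigonometric interpolation statement of the preceding theorem through the substitution \eqref{t-x}, $t_i=\cos 2\pi x_i$, and then to read off the closed form of $\ell_k^*$ from the formula for $\Phi_n^*$ in \lemref{lem:D_n}.

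\emph{Membership in $\Pi_n^2$ and the interpolation identity.} By construction $\ell_k(x)=\lambda_k^{(n)}\CP\bigl[\Phi_n^*(\cdot-\tfrac{k}{2n})\bigr](x)$, and $\Phi_n^*$ is a trigonometric polynomial whose frequencies $\nu$ all lie in $\Lambda_n^*$. Since $\nu\in\Lambda_n^*$ is equivalent to $|\nu_1|+|\nu_2|=\max\{|\nu_1+\nu_2|,|\nu_1-\nu_2|\}\le n$, and since by \eqref{Pe_k} the operator $\CP$ sends each $\e_\nu$ to $\cos 2\pi\nu_1 x_1\cos 2\pi\nu_2 x_2$, the function $\ell_k$ is even in each variable and belongs to $\CT_n^{\mathrm{even}}$. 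By the identification of $\CT_n^{\mathrm{even}}$ with $\Pi_n^2$ under \eqref{t-x} recorded in the previous subsection, $\ell_k$ factors through $(\cos 2\pi x_1,\cos 2\pi x_2)$, so $\ell_k^*$ is a well-defined element of $\Pi_n^2$, and hence so is $\CL_n f$. For the interpolation property, I note that $z_k=(\cos\tfrac{k_1\pi}{n},\cos\tfrac{k_2\pi}{n})=(\cos 2\pi\tfrac{k_1}{2n},\cos 2\pi\tfrac{k_2}{2n})$, that every coordinate of a point of $\Xi_n$ lies in $\{0,1,\dots,n\}$, and that $m\mapsto\cos\tfrac{m\pi}{n}$ is strictly decreasing there; hence $k\mapsto z_k$ is a bijection of $\Xi_n$ onto $\Gamma_n$, the sum defining $\CL_n f$ is unambiguous, and $\ell_k^*(z_j)=\ell_k(\tfrac{j}{2n})=\delta_{k,j}$ carries over verbatim from the preceding theorem. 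Writing $g(x)=f(\cos 2\pi x_1,\cos 2\pi x_2)$, so that $g(\tfrac{k}{2n})=f(z_k)$, we get $\CL_n f(z_j)=g(\tfrac{j}{2n})=f(z_j)$ for every $z_j\in\Gamma_n$.

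\emph{The closed form of $\ell_k^*$.} Here I would substitute $\Phi_n^*(y)=2[D_n(y)+D_{n-1}(y)]-\tfrac14(\cos 2\pi n y_1+\cos 2\pi n y_2)$ from \lemref{lem:D_n} into $\ell_k=\lambda_k^{(n)}\CP[\Phi_n^*(\cdot-\tfrac{k}{2n})]$ and use that $\CP$ is linear and, by \eqref{Pe_k}, annihilates any summand odd in $x_1$ or in $x_2$. The term $2[D_n+D_{n-1}]$ then produces, after the shift by $\tfrac{k}{2n}$, a multiple of $\CP\bigl[D_n(\cdot-\tfrac{k}{2n})+D_{n-1}(\cdot-\tfrac{k}{2n})\bigr](x)$, which by \eqref{Pe_k} equals $\sideset{}{'}\sum_{0\le j_1+j_2\le n}T_{j_1}(z_{k_1})T_{j_2}(z_{k_2})T_{j_1}(t_1)T_{j_2}(t_2)$ (prime halving the on-axis terms) plus the same expression with $n$ replaced by $n-1$; and each shifted boundary cosine $\cos 2\pi n(x_i-\tfrac{k_i}{2n})=\cos(2\pi n x_i-\pi k_i)$ is carried by $\CP$ to $\cos(\pi k_i)\cos 2\pi n x_i=(-1)^{k_i}T_n(t_i)$. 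Collecting these contributions, absorbing the factor $\lambda_k^{(n)}$, and regrouping the resulting explicit $T$-expansion should reproduce $\frac12\CP\bigl[D_n(\cdot-\tfrac{k}{2n})+D_{n-1}(\cdot-\tfrac{k}{2n})\bigr](x)-\tfrac14[(-1)^{k_1}T_{k_1}(t_1)+(-1)^{k_2}T_{k_2}(t_2)]$; the identity is then a matter of comparing the coefficient of each $T_{j_1}(t_1)T_{j_2}(t_2)$ on the two sides.

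\emph{Main obstacle.} The one genuinely delicate step is this last coefficient matching: the node weight $\lambda_k^{(n)}\in\{1,2,4\}$ attached to $z_k$, the weights $\wt c_\nu^{(n)}$ built into $\Phi_n^*$ at the boundary frequencies, the normalizing constant hidden in $D_n$, and the halvings in the primed sums all interact, and one must keep careful track of them so that exactly the constants $\tfrac12$ and $\tfrac14$, and the Chebyshev indices $k_1,k_2$ rather than $n$, survive in the correction term. I expect it is cleanest to carry this out frequency by frequency — separating the interior frequencies $j_1 j_2(n-j_1-j_2)\ne 0$ from those with $j_1+j_2=n$ or $j_1 j_2=0$ — rather than to manipulate the Dirichlet kernels $D_n$ and $D_{n-1}$ globally.
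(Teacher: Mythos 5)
Your route is the same as the paper's: both conclusions are transported through the substitution $t_i=\cos 2\pi x_i$ from the preceding trigonometric theorem, and the compact formula is obtained by inserting the expression for $\Phi_n^*$ from Lemma \ref{lem:D_n} and simplifying the shifted boundary cosines. The only substantive comment concerns the ``main obstacle'' you leave open: there is no delicate frequency-by-frequency matching to perform, and attempting one would fail, because your direct computation already is the whole argument and the place where it disagrees with the printed statement is a defect of the statement, not of your derivation. You correctly find $\CP\bigl[\cos 2\pi n(\cdot_1-\tfrac{k_1}{2n})\bigr](x)=(-1)^{k_1}\cos 2\pi n x_1=(-1)^{k_1}T_n(t_1)$; the Chebyshev index here is $n$, not $k_1$, and the paper's own proof computes exactly this quantity before declaring the formula verified, so the $T_{k_1}(t_1)$, $T_{k_2}(t_2)$ in the displayed formula are evidently misprints for $T_n(t_1)$, $T_n(t_2)$ (indeed $T_{k_1}(z_{j_1})=\cos\tfrac{k_1 j_1\pi}{n}$ is not $\pm 1$ in general, so the printed correction term could not be consistent with $\ell_k^*(z_j)=\delta_{k,j}$). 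The same applies to the constants: from $\ell_k=\lambda_k^{(n)}\CP\bigl[\Phi_n^*(\cdot-\tfrac{k}{2n})\bigr]$ and \eqref{Phi*formula} one gets prefactors $2\lambda_k^{(n)}$ and $\tfrac{\lambda_k^{(n)}}{4}$ rather than $\tfrac12$ and $\tfrac14$, a mismatch that traces back to normalization slips between \eqref{CI_n*} and \eqref{D_n} in the source. So rather than trying to make the bookkeeping produce the printed coefficients, you should present your computation as the (corrected) formula; everything else in your argument --- the identification of $\CP\e_\nu$ with $\cos 2\pi\nu_1x_1\cos 2\pi\nu_2x_2$, the membership in $\CT_n^{\mathrm{even}}\cong\Pi_n^2$, the bijectivity of $k\mapsto z_k$ on $\Xi_n$, and the transfer of $\ell_k(\tfrac{j}{2n})=\delta_{k,j}$ --- is sound and matches the paper's (much terser) proof.
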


\begin{proof}
That $\CL_n f$ interpolates at $z_k \in \Gamma_n$ is an immediate consequence
of the change of variables, which also shows that $\CL_n f \in \Pi_n^2$. Moreover,
$\cos 2\pi n(x_1 - \frac{k_1}{2n})  = (-1)^{k_1} \cos 2\pi n x_1 =
(-1)^{k_1}T_n(x_1)$, which verifies the formula of $\ell_k^*(t)$.
\end{proof}

The polynomial $\CL_n f$ belongs, in fact, to a subspace $\Pi_n^* \subset
\Pi_n^2$ of dimension $|\Xi_n| = \dim \Pi_{n-1}^2 + \lfloor \frac{n}2 \rfloor +1$,
and it is the unique interpolation polynomial in $\Pi_n^*$. In the case of
$n$ is odd, this interpolation polynomial was defined and studied in
\cite{X96}, where a slightly different scheme with one point less was studied
in the case of even $n$. Recently the interpolation polynomials in \cite{X96}
have been tested and studied numerically in \cite{Xpt1,Xpt2}; the results
show that these polynomials can be evaluated efficiently and provide valuable
tools for numerical computation.

%%%%%%%%%%%%%%%%%%%%%%%%
%%%%%%            3D  Section        %%%%%%
%%%%%%%%%%%%%%%%%%%%%%%%

\section{Cubature and Interpolation on the cube}
\setcounter{equation}{0}

For $d =2$, the choice of our spectral set $\Omega_B$ and lattice in the
previous section ensures that we end up with a space close to the polynomial
subspace $\Pi_n^2$; indeed, monomials in $\Pi_n^2$ are indexed by
$0 \le j_1 + j_2 \le n$, a quarter of $\Lambda_n^*$.  For $d = 3$, the
same consideration indicates that we should choose the spectral set as the
octahedron $\{x: -n \le x_1 \pm x_2 \pm x_3 \le n\}$. The octahedron, however,
does not tile $\RR^3$ by lattice translation (see, for example, \cite[p.  452]{CS}).
As an alternative, we choose the spectral set as {\it rhombic dodecahedron},
which tiles $\RR^3$ by lattice translation with {\it face centered cubic (fcc) lattice}.
In \cite{LX}, a discrete Fourier analysis on the rhombic dodecahedron is
developed and used to study cubature and interpolation on the rhombic
dodecahedron, which also leads to results on tetrahedron. In contrast, our results
will be established on the cube $[-\frac12, \frac12]^3$, but our set $\Omega_B$
is chosen to be a rhombic dodecahedron.

\subsection{Discrete Fourier analysis and cubature formula on the cube}
We choose our matrix $B$ as the generator matrix of fcc lattice,
\begin{align*}
   B = n \begin{pmatrix}
               0  &  1 &  1 \\
               1  &  0 &  1 \\
               1  &  1 &  0
              \end{pmatrix}
     \quad \hbox{and} \quad
    B^{-1} = \frac{1}{2n} \begin{pmatrix}
               -1 &  1  &  1 \\
               1  &  -1 &  1 \\
               1  &  1  &  -1
              \end{pmatrix}.
\end{align*}
The spectral set of the fcc lattice is the rhombic dodecahedron
(see Figure 2). Thus,
$$
\Omega_B = \{x \in \RR^3:  -n \le x_\nu \pm x_\mu < n,  1 \le \nu<\mu \le 3\}.
$$

\begin{figure}[ht]
\centering
\includegraphics[width=0.6\textwidth]{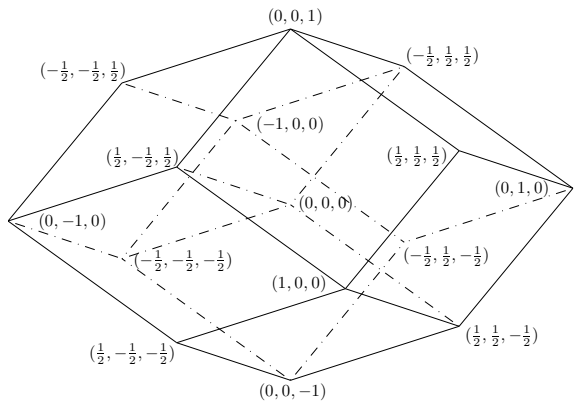}
\caption{Rhombic dodecahedron}
\end{figure}

The strict inequality in the definition of $\Omega_B$ reflects our requirement
that the tiling of the spectral set has no overlapping. From the expression of
$B^{-\tr}$, it follows that $\Lambda_B =: \Lambda_n $ is given by
$$
  \Lambda_n := \{j \in \ZZ^3: -n \le -j_1 + j_2 +j_3,
    j_1 - j_2 +j_3, j_1 + j_2 -j_3 < n\}.
$$
It is known that $|\Lambda_n| = \det (B) = 2 n^3$. Furthermore, $\Lambda_B^\dag
 = : \Lambda_n^\dag$ is given by
$$
   \Lambda_n^\dag = \ZZ^3 \cap \Omega_B =
       \{k \in \ZZ^3:  -n \le k_\nu \pm k_\mu < n,  1 \le \nu<\mu \le 3 \}.
$$
We denote the space $\CT_B$ by $\CT_n$, which is given by
$$
   \CT_n: = \ \mathrm{span}
      \left\{\e^{2\pi i\, k\cdot x} : \ k \in \Lambda_{n}^\dag \right\}.
$$
Then $\dim \CT_n = |\Lambda_n^\dag| = \det(B)=2 n^3$.

\begin{thm} \label{inner1-d3}
Define the set
\begin{align*}
X_n: =  \left \{  2 k: - \tfrac{n}{2} \le k_1, k_2, k_3 < \tfrac{n}{2}\right\}
     \cup  \left\{ 2 k+1: - \tfrac{n+1}{2}  \le k_1, k_2,k_3 < \tfrac{n-1}{2} \right\}.
\end{align*}
Then for all $f,g \in \CT_n$,
$$
  \langle f, g \rangle_n : = \frac{1}{2n^3} \sum_{k\in X_n} f(\tfrac{k}{2n})
         \overline{g(\tfrac{k}{2n})}
     = \int_{[-\frac12,\frac12]^3} f(x)\overline{g(x)} dx.
$$
\end{thm}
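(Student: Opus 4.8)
The plan is to reduce Theorem~\ref{inner1-d3} to the general result Theorem~\ref{thm:2.1} exactly as was done in the two-dimensional case (Theorem~\ref{inner1}). The key observation is that the discrete inner product $\langle f,g\rangle_n$ defined here is literally the inner product $\langle f,g\rangle_B$ of Theorem~\ref{thm:2.1} for the specific choice of $B$ made above, provided one knows that the sum over $j\in\Lambda_B$, after the substitution $k=2nB^{-\tr}j$, runs precisely over the stated set $X_n$. So the entire content of the proof is the combinatorial/arithmetic identity
\begin{equation*}
   j\in\Lambda_n \quad\Longleftrightarrow\quad k=2nB^{-\tr}j\in X_n,
\end{equation*}
together with the bookkeeping that $\det B = 2n^3$ so the normalizing constants match.

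First I would write out $2nB^{-\tr}$ explicitly. Since $B^{-1}=\frac{1}{2n}\begin{pmatrix}-1&1&1\\1&-1&1\\1&1&-1\end{pmatrix}$ and this matrix is symmetric, we have $2nB^{-\tr}j=(-j_1+j_2+j_3,\;j_1-j_2+j_3,\;j_1+j_2-j_3)$. Call this vector $k$. Then the defining inequalities of $\Lambda_n$, namely $-n\le -j_1+j_2+j_3,\,j_1-j_2+j_3,\,j_1+j_2-j_3<n$, are exactly the statement $-n\le k_1,k_2,k_3<n$, i.e.\ $k\in\ZZ^3\cap[-n,n)^3$ — except that $k$ is not an arbitrary lattice point: the three coordinates $k_1,k_2,k_3$ all have the same parity (each equals a sum of the three integers $j_1,j_2,j_3$ with one sign flipped, so $k_\ell\equiv j_1+j_2+j_3\pmod 2$ for every $\ell$). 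Conversely, given $k\in\ZZ^3$ with all coordinates of equal parity, one recovers $j=B^{\tr}k/(2n)$; from $j_1=\frac{k_2+k_3}{2}$, $j_2=\frac{k_1+k_3}{2}$, $j_3=\frac{k_1+k_2}{2}$ one checks these are integers precisely because $k_1,k_2,k_3$ share a common parity. Hence $k=2nB^{-\tr}j$ ranges, as $j$ runs over $\Lambda_n$, over exactly those $k\in[-n,n)^3$ with all coordinates even or all odd. Writing $k=2m$ in the even case gives $-\frac n2\le m_1,m_2,m_3<\frac n2$, and $k=2m+1$ in the odd case gives $-\frac{n+1}{2}\le m_1,m_2,m_3<\frac{n-1}{2}$; this is precisely the set $X_n$ as defined in the statement. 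This establishes the displayed equivalence.

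With the equivalence in hand, the change of variables $k=2nB^{-\tr}j$ is a bijection $\Lambda_n\to X_n$, so
\begin{equation*}
   \langle f,g\rangle_n=\frac{1}{2n^3}\sum_{k\in X_n}f(\tfrac{k}{2n})\overline{g(\tfrac{k}{2n})}
   =\frac{1}{|\det B|}\sum_{j\in\Lambda_B}f(B^{-\tr}j)\overline{g(B^{-\tr}j)}=\langle f,g\rangle_B,
\end{equation*}
using $|\det B|=2n^3$ (which follows from expanding the determinant of the $0/1$ circulant-like matrix $\begin{pmatrix}0&1&1\\1&0&1\\1&1&0\end{pmatrix}$, whose determinant is $2$, times $n^3$). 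Since by definition $\CT_n=\CT_B$ for this $B$, Theorem~\ref{thm:2.1} gives $\langle f,g\rangle_B=\int_{[-\frac12,\frac12]^3}f\overline g\,dx=\langle f,g\rangle$ for all $f,g\in\CT_n$, and the theorem follows. I do not anticipate a genuine obstacle here; the only point requiring care is the parity argument — verifying in both directions that $j\in\ZZ^3$ if and only if $k=2nB^{-\tr}j$ has all coordinates of the same parity — and then the routine but slightly fussy translation of the half-open box $[-n,n)^3$ intersected with the "even" and "odd" sublattices into the two families $\{2k\}$ and $\{2k+1\}$ with the stated ranges. Everything else is a direct appeal to Theorem~\ref{thm:2.1}, mirroring the proof of Theorem~\ref{inner1}.
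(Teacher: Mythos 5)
Your proposal is correct and follows essentially the same route as the paper: the change of variables $k=2nB^{-\tr}j$, the parity check that makes $j\in\ZZ^3$ correspond exactly to the two families $\{2k\}$ and $\{2k+1\}$ in $X_n$, and the reduction to Theorem~\ref{thm:2.1} via $|\det B|=2n^3$. You simply spell out the parity argument in more detail than the paper does.
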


\begin{proof}
Changing variables from $j$ to $k = 2n B^{-\tr}j$, or $j = B^\tr k/(2n)$,
then, as $j_1, j_2, j_3$ are integers and $j_1 = \frac{k_2+k_3}{2}$,
$j_2 = \frac{k_1+k_3}{2}$, $j_3 = \frac{k_1+k_2}{2}$, we see that
\begin{equation} \label{j-k3}
     j\in \Lambda_n  \quad \Longleftrightarrow  \quad 2n B^{-\tr}j \in X_n
     \quad \hbox{and} \quad  \sum_{j \in \Lambda_n} f(B^{-tr} j)
= \sum_{k\in X_n} f(\tfrac{k}{2n}),
\end{equation}
from which we conclude that $\la f,g\ra_n = \la f, g \ra_B$.
Consequently, this theorem is a special case of Theorem \ref{thm:2.1}.
\end{proof}

Just like the case of $d =2$, we denote the symmetric counterpart of
$X_n$ by $X_n^*$ which is defined by
$$
X_n^* : = \left \{ 2 k: - \tfrac{n}{2} \le k_1, k_2,k_3 \le \tfrac{n}{2}\}   \cup
    \{ 2 k+1:  - \tfrac{n+1}{2}  \le k_1, k_2, k_3\le \tfrac{n-1}{2}\right\}\!.
$$
A simple counting shows that $|X_n^*| = n^3 + (n+1)^3$.
The set $X_n^*$ is further partitioned into four parts,
$$
   X_n^* = X_n^\circ  \cup X_n^f \cup X_n^e \cup X_n^v,
$$
where $X_n^{\circ} = X_n^* \cap (-n,n)^2$ is the set of interior points,
$X_n^f$ contains the points in $X_n^*$ that are on the faces of $[-n,n]^3$
but not on the edges or vertices, $X_n^{e}$ contains the points in $X_n^*$
that are on the edges of $[-n,n]^3$ but not on the corners or vertices,
while $X_n^v$ denotes the points of $X_n^*$ at the vertices of  $[-n,n]^3$.

\begin{thm} \label{inner2-3d}
Define the inner product
\begin{equation} \label{eq:inner2-3d}
\la f, g \ra_n^* : = \frac{1}{2 n^3} \sum_{k \in X_n^*} c_k^{(n)} f(\tfrac{k}{2n})
          \overline{g(\tfrac{k}{2n})},
    \qquad \hbox{where}\quad
          c_k^{(n)} = \begin{cases} 1, & k \in X_n^\circ \\ \frac12, & k \in X_n^f \\
                      \frac14, & k \in X_n^e \\ \frac18, & k \in X_n^v   \end{cases}.
\end{equation}
Then for all $f,g \in \CT_n$,
$$
\int_{[-\frac12,\frac12]^3} f(x)\overline{g(x)} dx = \langle f, g \rangle_n
        = \langle f, g \rangle_n^*.
$$
\end{thm}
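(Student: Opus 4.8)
The plan is to reduce the claim to the two-dimensional argument in Theorem~\ref{inner2}, since the only thing that really needs proof is $\langle f,g\rangle_n = \langle f,g\rangle_n^*$ (the first equality $\int = \langle\cdot,\cdot\rangle_n$ is exactly Theorem~\ref{inner1-d3}). As in the planar case, it suffices to prove this identity when $f$ and $g$ are periodic with respect to $\ZZ^3$, and since $\e_k(x+m)=\e_k(x)$ for $m\in\ZZ^3$, the functions in $\CT_n$ are periodic; by bilinearity it is in fact enough to prove $\sum_{k\in X_n} h(\tfrac{k}{2n}) = \sum_{k\in X_n^*} c_k^{(n)} h(\tfrac{k}{2n})$ for every $\ZZ^3$-periodic $h$ (take $h = f\bar g$, which is again periodic though not necessarily in $\CT_n$ — only periodicity is used).

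First I would observe that $X_n^\circ = X_n \cap (-n,n)^3$ and the weights $c_k^{(n)}$ equal $1$ there, so the interior contributions to the two sums coincide. The remaining task is a bookkeeping argument on the boundary of the cube $[-n,n]^3$: I would partition $X_n^* \setminus X_n^\circ$ according to how many of the three coordinates of a point are equal to $\pm n$ (one coordinate: faces $X_n^f$; two coordinates: edges $X_n^e$; three coordinates: vertices $X_n^v$), and group together all points of $X_n^*$ that are congruent mod $2n\ZZ^3$. A point on a face lies in an orbit of size $2$ under the relevant coordinate flip $x_\nu \mapsto x_\nu \pm 2n$, exactly one representative of which lies in the half-open set $X_n$; a point on an edge lies in an orbit of size $4$, again with exactly one representative in $X_n$; a vertex point lies in an orbit of size $8$, with the single representative $(-n,-n,-n)\in X_n$. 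Since $h$ is $\ZZ^3$-periodic, $h(\tfrac{k}{2n})$ is constant on each such orbit, so $\sum_{k\in X_n^f} \tfrac12 h(\tfrac{k}{2n})$ collapses to the sum of $h$ over the face representatives in $X_n$, and similarly $\tfrac14$ on edges and $\tfrac18$ on vertices collapse to the corresponding $X_n$-representatives. Adding the interior, face, edge, and vertex contributions reconstructs exactly $\sum_{k\in X_n} h(\tfrac{k}{2n})$.

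The one point requiring care — and the main obstacle — is verifying that the half-open description of $X_n$ really does pick out exactly one representative from each mod-$2n\ZZ^3$ orbit that $X_n^*$ contributes, including the parity split between the even sublattice $\{2k\}$ and the odd sublattice $\{2k+1\}$. Concretely, for the even part one must check that among the points $2k$ with $-\tfrac n2 \le k_i \le \tfrac n2$, translating a coordinate by $2n$ moves $k_i$ from $\tfrac n2$ to $-\tfrac n2$, so exactly the endpoint $k_i=\tfrac n2$ is the ``extra'' copy excluded by the strict inequality $k_i<\tfrac n2$ defining $X_n$; the odd part is analogous with the bound $-\tfrac{n+1}{2}\le k_i<\tfrac{n-1}{2}$ versus $\le \tfrac{n-1}{2}$. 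When $n$ is even only the even sublattice touches the faces/edges/vertices of $[-n,n]^3$, and when $n$ is odd only the odd sublattice does, but the counting is uniform either way. I would present this as the direct analogue of the $X_n^e = X_n^{e,1}\cup X_n^{e,2}$ splitting used in the proof of Theorem~\ref{inner2}, now with three layers (faces, edges, vertices) instead of two (edges, vertices), and conclude that $\langle f,g\rangle_n = \langle f,g\rangle_n^*$ for all periodic $f,g$, hence in particular for all $f,g\in\CT_n$.
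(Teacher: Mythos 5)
Your proposal is correct and follows essentially the same route as the paper's proof: reduce to the identity $\sum_{k\in X_n} h(\tfrac{k}{2n}) = \sum_{k\in X_n^*} c_k^{(n)} h(\tfrac{k}{2n})$ for $\ZZ^3$-periodic $h$, match interior points directly, and collapse the face/edge/vertex contributions by grouping points of $X_n^*$ into mod-$2n\ZZ^3$ orbits of sizes $2$, $4$, $8$, each with exactly one representative in the half-open set $X_n$. Your extra remarks on the even/odd sublattice parity split are a correct (and welcome) elaboration of a point the paper leaves implicit.
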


\begin{proof}
The proof follows along the same line as the proof of Theorem \ref{inner2}.
We only need to show $\la f,g\ra_n = \la f, g\ra_n^*$ if $f\overline{g}$ is periodic.
The interior points of $X_n$ and $X_n^*$ are the same, so that $c_k^{(n)} = 1$
for $k \in X_n^\circ$. Let $\ve_1 = (1,0,0)$, $\ve_2=(0,1,0)$, and $\ve_3=(0,0,1)$.
Each point $k$ in $X_n^f$ has exactly one opposite point $k^*$ in $X_n^f$ under translation by $\pm n \ve_i$ and only one of them is in $X_n$, so that
$f(x_k) = \frac{1}{2} [f(x_k)+f(x_k^*)]$ if $f$ is periodic, which is why we define
$c_k^{(n)} =\frac12$ for $k \in X_n^f$. Evidently, only three edges of $X_n^*$ are
in $X_n^* \setminus X_n$. Each point in $X_n^e$ corresponds to exactly four
points in $X_n^e$ under integer translations $\pm n \ve_i$ and only one among
the four is in $X_n$, so we define $c_k^{(n)} = \frac14$ for $k \in X_n^e$.
Finally, all eight corner points can be derived from translations $n \ve_i$ points,
used repeatedly, and exactly one, $(-n,-n,-n)$, is in $X_n^*\setminus X_n$, so that
we define $c_k^{(n)} = \frac18$ for $k \in X_n^v$.
\end{proof}

We also denote the symmetric counterpart of $\Lambda_n^\dag$ by
$\Lambda_n^{\dag*}$,
\begin{equation}\label{LambdaDag*}
  \Lambda_n^{\dag*} :=\{j \in \ZZ^3:  -n \le j_\nu \pm j_\mu \le n,  1 \le \nu<\mu \le 3 \}
\end{equation}
and denote the counterpart of $\CT_n$ by $\CT_n^*$, which is defined accordingly by
$$
  \CT_n^* : = \mathrm{span}
           \left\{\e^{2\pi i\, k\cdot x} : \ k \in \Lambda_n^{\dag*} \right\}.
$$

\begin{thm} \label{thm:cuba1-3d}
For $n \ge 2$, the cubature formulas
\begin{equation} \label{cuba1-3d}
     \int_{[-\frac12,\frac12]^3} f(x) dx =
           \frac{1}{2n^3} \sum_{k \in X_n^*} c_k^{(n)} f(\tfrac{k}{2n})
 \quad\hbox{and} \quad
     \int_{[-\frac12,\frac12]^3} f(x) dx =  \frac{1}{2n^3} \sum_{k \in X_n} f(\tfrac{k}{2n})
\end{equation}
are exact for $f \in \CT_{2n-1}^*$.
\end{thm}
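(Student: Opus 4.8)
\emph{Proof plan.} The plan is to reproduce the proof of Theorem~\ref{thm:cuba1} almost verbatim: its structure is dimension-independent, so I would only settle, in the present three-dimensional setting, the two facts on which it rests, and then repeat the reduction.

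By linearity of both sides it suffices to show that each formula in \eqref{cuba1-3d} integrates $\e_j$ exactly for every $j\in\Lambda_{2n-1}^{\dag*}$, since these exponentials span $\CT_{2n-1}^*$. Fix such a $j$. Because the translates of the rhombic dodecahedron $\Omega_B$ by $B\ZZ^3$ tile $\RR^3$, there are $l\in\ZZ^3$ and $x\in\Omega_B$ with $j=x+Bl$, and since $B$ has integer entries, $\nu:=x=j-Bl\in\ZZ^3\cap\Omega_B=\Lambda_n^\dag$, so $\e_\nu\in\CT_n$. The first fact to verify is that $\e_j$ and $\e_\nu$ take the same value at every node $\tfrac{k}{2n}$, $k\in X_n^*$: since $j-\nu=Bl$ we have $\e_j(\tfrac{k}{2n})\overline{\e_\nu(\tfrac{k}{2n})}=\e^{2\pi i\, l\cdot(B^\tr k/(2n))}$, and for $k\in X_n^*$, whose coordinates are all even or all odd, $B^\tr k/(2n)=\tfrac12(k_2+k_3,\,k_1+k_3,\,k_1+k_2)\in\ZZ^3$, so this factor equals $1$. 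Granting this, combining the node identity with Theorem~\ref{inner2-3d} (applied to $f=\e_\nu$, $g=\e_0$, both in $\CT_n$), then the node identity again on $X_n\subset X_n^*$, then Theorem~\ref{inner1-d3}, gives
\begin{align*}
\frac{1}{2n^3}\sum_{k\in X_n^*}c_k^{(n)}\e_j(\tfrac{k}{2n})
  &=\frac{1}{2n^3}\sum_{k\in X_n^*}c_k^{(n)}\e_\nu(\tfrac{k}{2n})\\
  &=\frac{1}{2n^3}\sum_{k\in X_n}\e_\nu(\tfrac{k}{2n})
   =\frac{1}{2n^3}\sum_{k\in X_n}\e_j(\tfrac{k}{2n})\\
  &=\int_{\Omega}\e_\nu(x)\,dx=\delta_{\nu,0}.
\end{align*}
Thus both cubature sums in \eqref{cuba1-3d} evaluate to $\delta_{\nu,0}$, whereas $\int_{\Omega}\e_j(x)\,dx=\delta_{j,0}$; so the proof is complete once $\delta_{\nu,0}=\delta_{j,0}$, i.e.\ $\nu=0\iff j=0$, is established.

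One implication follows from uniqueness of the tiling decomposition. For the other — the step I expect to be the main (though still elementary) obstacle — one checks that $Bl\in\Lambda_{2n-1}^{\dag*}$ implies $l=0$. Writing $j=Bl=n(l_2+l_3,\,l_1+l_3,\,l_1+l_2)$, membership of $j$ in $\Lambda_{2n-1}^{\dag*}$ bounds every $|j_i|$ and every $|j_\mu-j_\nu|$ by $2n-1<2n$, and since each of these is $\pm n$ times one of the numbers $l_\mu\pm l_\nu$, this forces $|l_\mu\pm l_\nu|\le1$ for every pair $\mu<\nu$; as integers $a,b$ with $|a+b|\le1$ and $|a-b|\le1$ satisfy $|a|+|b|\le1$, summing over the three pairs gives $|l_1|+|l_2|+|l_3|\le1$, and the only remaining candidates $l=\pm\ve_i$ are ruled out because, e.g., $l=\ve_1$ gives $j=(0,n,n)$ with $j_2+j_3=2n>2n-1$. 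Hence $l=0$ and $j=0$. In lattice language this just records that the dilate of the rhombic dodecahedron of ``radius'' $2n-1$ contains no nonzero point of the scaled face centered cubic lattice $B\ZZ^3$, whose shortest vectors $n(\pm1,\pm1,0)$ and their coordinate permutations lie at ``radius'' exactly $2n$; everything else is a transcription of the $d=2$ argument.
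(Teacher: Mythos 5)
Your proposal is correct and follows essentially the same route as the paper's own proof: reduce to the exponentials $\e_j$, $j\in\Lambda_{2n-1}^{\dag*}$, write $j=\nu+Bl$ with $\nu\in\Lambda_n^{\dag}$ via the tiling, pass to $\e_\nu$ on the nodes, and invoke Theorems~\ref{inner1-d3} and \ref{inner2-3d}. The only difference is that you supply full verifications of the two steps the paper labels ``easy to see'' (that $B^{\tr}k/(2n)\in\ZZ^3$ for $k\in X_n^*$ because the coordinates of $k$ share a parity, and that $Bl\in\Lambda_{2n-1}^{\dag*}$ forces $l=0$), and both verifications are correct.
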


\begin{proof}
As in the proof of Theorem \ref{thm:cuba1},  for any $j\in \ZZ^3$, there exist  
$\nu\in \Lambda_n^{\dag}$ and $l\in \ZZ^3$ such that $j=\nu+Bl$.

Assume now $j\in \Lambda^{\dag *}_{2n-1}$. Clearly  the integral of $\e_j$ 
over $\Omega$ is $\delta_{j,0}$. On the other hand, let us suppose $j=\nu+Bl$ 
with $\nu\in \Lambda_n$ and $l\in \ZZ^3$. Then it is easy to see that
$\e_j(\frac{k}{2n}) = \e_{\nu}(\frac{k}{2n})$ for each $k\in X^*_n$.
Consequently, we get from Theorem \ref{inner2-3d} that
\begin{align*}
   \sum_{k\in X_n^*} c_{k}^{(n)}\e_{j}(\tfrac{k}{2n}) & = \sum_{k\in X_n^*}
     c_{k}^{(n)}\e_{\nu}(\tfrac{k}{2n})  = \sum_{k\in X_n}
     \e_{\nu}(\tfrac{k}{2n}) \\ 
     & = \sum_{k\in X_n} \e_{j}(\tfrac{k}{2n})  
        = \int_\Omega  \e_{\nu}(x) dx = \delta_{\nu,0}.
\end{align*}
Since $\nu=0$ implies  $j=l=0$, we further  obtain that $\delta_{\nu,0}=
\delta_{j,0}$. This states that the cubature formulas \eqref{cuba1-3d} are 
exact for each $\e_j$ with $j\in \Lambda^{\dag *}_{2n-1}$, which completes
the proof.
\end{proof}

\subsection{Cubature formula for algebraic polynomials}
We can also translate the cubature in Theorem \ref{thm:cuba1-3d} into one
for algebraic polynomials. For this we use the change of variables
\begin{equation} \label{t-x3d}
   t_1 = \cos 2 \pi x_1,  \quad t_2 = \cos 2 \pi x_2,  \quad t_3 = \cos 2 \pi x_3, \quad
       x \in [-\tfrac12,\tfrac12]^3.
\end{equation}
Under \eqref{t-x3d}, the functions $\cos2\pi k_1 x_1 \cos 2\pi k_2 x_2
\cos 2\pi k_3x_3$ become algebraic polynomials $T_{k_1}(t_1)T_{k_2}(t_2)
T_{k_3}(t_3)$, which are even in each of its variables. The subspace of
$\CT_n^*$ that consists of functions that are even in each of its variables
corresponds to the polynomial subspace
$$
   \Pi_n^*: = \mathrm{span} \{T_{k_1}(x_1)T_{k_2}(x_2)T_{k_3}(x_3):
         k_1, k_2,k_3 \ge 0,  \, k_\nu+k_\mu \le n, \, 1 \le \nu < \mu \le n\}.
$$
Notice that $X_n^*$ is symmetric in the sense that if $x\in X_n^*$ then $\sigma x
\in X_n^*$ for all $\sigma \in \{-1,1\}^3$, where $(\sigma x)_i = \sigma_i x_i$. In
order to evaluate functions that are even in each of its variables on $X_n^*$
we only need to consider $X_n^* \cap \{x: x_1,x_2,x_3 \ge 0\}$.  Hence, we
define,
\begin{equation} \label{Xi_n}
   \Xi_n : = \{2 k: 0 \le k_1,k_2,k_3 \le \tfrac{n}{2}\} \cup
        \{2 k+1: 0 \le k_1,k_2,k_3 \le \tfrac{n-1}{2}\}
\end{equation}
and, under the change of variables \eqref{t-x3d}, define
\begin{equation} \label{Gamma_n3d}
    \Gamma_n :=\{(z_{k_1},z_{k_2},z_{k_3}): k \in \Xi_n \}, \qquad z_k = \tfrac{k}{2n}.
\end{equation}
Moreover, we denote by $\Gamma_n^\circ$, $\Gamma_n^f$, $\Gamma_n^e$ and
$\Gamma_n^v$ the subsets of $\Gamma_n$ that contains interior points, points
on the faces but not on the edges, points on the edges but not on the vertices,
and points on the vertices, of $[-1,1]^3$, respectively, and we define
$\Xi_n^\circ$, $\Xi_n^f$, $\Xi_n^e$ and $\Xi_n^v$ accordingly. A simple counting
shows that
\begin{equation} \label{Xi3d}
    |\Xi_n| =  (\lfloor \tfrac{n}2 \rfloor+1)^3 + ( \lfloor \tfrac{n-1}2  \rfloor+1)^3
        = \begin{cases} \frac{(n+1)^3}{4} + \frac{3(n+1)}{4},  & \hbox{$n$ is even},\\
              \frac{(n+1)^3}{4},  & \hbox{$n$ is odd}. \end{cases}
\end{equation}

\begin{thm} \label{thm:cubaT3d}
Write $z_k =(z_{k_1}, z_{k_2},z_{k_3})$. The cubature formula
\begin{equation} \label{cubaT3d}
 \frac{1}{\pi^3} \int_{[-1,1]^3} f(t) W_0(t) dt
        = \frac{1}{2 n^2}\sum_{k \in \Xi_n} \l_k^{(n)} f(z_k),
        \quad
  \l_k^{(n)} : = \begin{cases} 8, & k \in \Xi_n^\circ, \\
     4, & k \in \Xi_n^f, \\
     2, & k \in \Xi_n^e, \\
     1, & k \in \Xi_n^v, \end{cases}
\end{equation}
is exact for $\Pi_{2n-1}^*$. In particular, it is exact for $\Pi_{2n-1}^3$.
\end{thm}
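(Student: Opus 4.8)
The plan is to mirror the argument for Theorem~\ref{thm:cubaT}, reducing the algebraic-polynomial cubature \eqref{cubaT3d} to the trigonometric cubature \eqref{cuba1-3d} via the change of variables \eqref{t-x3d} and the symmetrization over the sign group $\{-1,1\}^3$. First I would observe that if $f\in\Pi_{2n-1}^*$, then the function $g(x):=f(\cos2\pi x_1,\cos2\pi x_2,\cos2\pi x_3)$ lies in $\CT_{2n-1}^*$ and is even in each of its three variables (this is exactly the correspondence $\Pi_n^*\leftrightarrow$ even part of $\CT_n^*$ noted just before the statement); also $g(\tfrac{k}{2n})=f(z_{k_1},z_{k_2},z_{k_3})$. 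Since $\CT_{2n-1}^*\supset\CT_{2n-1}^*$ and the first cubature in \eqref{cuba1-3d} is exact on $\CT_{2n-1}^*$ by Theorem~\ref{thm:cuba1-3d}, applying it to $g$ gives
$$
\frac{1}{\pi^3}\int_{[-1,1]^3}f(t)W_0(t)\,dt=\int_{[-\frac12,\frac12]^3}g(x)\,dx=\frac{1}{2n^3}\sum_{k\in X_n^*}c_k^{(n)}g(\tfrac{k}{2n}),
$$
where I have used that $W_0(t)\,dt$ pulls back to $dx$ up to the factor $\pi^3$ coming from the three substitutions $t_i=\cos2\pi x_i$ on $[-\tfrac12,\tfrac12]$ (each contributes $\int_{-1}^1 h(t)/\sqrt{1-t^2}\,dt=\pi\int_{-1/2}^{1/2}h(\cos2\pi x)\,dx$, and the factor $2$ difference against $2n^3$ versus $2n^2$ in \eqref{cubaT3d} is absorbed in the weights). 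So the whole statement reduces to the combinatorial identity
$$
\sum_{k\in X_n^*}c_k^{(n)}g(\tfrac{k}{2n})=\sum_{k\in\Xi_n}\l_k^{(n)}f(z_k)
$$
for every $g$ even in each variable, with $c_k^{(n)}$ as in \eqref{eq:inner2-3d} and $\l_k^{(n)}$ as in \eqref{cubaT3d}.

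The core of the proof is thus this identity, which I would establish by the same orbit-counting as in Theorem~\ref{thm:cubaT} but now for the group $\{-1,1\}^3$ of order $8$ acting by sign changes on coordinates. For $k\in X_n^*$ write $k\sigma$ for its orbit under this action; $g$ is constant on $k\sigma$ with value $g(\tfrac{k}{2n})=f(z_{|k_1|},z_{|k_2|},z_{|k_3|})$, so grouping the left sum by orbits and using that each orbit has a unique representative in the nonnegative octant $\Xi_n$, it suffices to check for each $k\in\Xi_n$ that $\sum_{j\in k\sigma}c_j^{(n)}=\l_k^{(n)}$. The orbit size is $2^m$ where $m$ is the number of nonzero coordinates of $k$, and the key geometric observation is that $c_j^{(n)}$ is the same ($=\,2^{-p}$ where $p$ is the number of coordinates of $j$ equal to $\pm n$) for every $j$ in the orbit, because sign changes do not alter which coordinates are zero or have absolute value $n$. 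One then does the four cases: (i) $k\in\Xi_n^\circ$ (no coordinate $0$ or $n$): orbit size $8$, each $c_j^{(n)}=1$, sum $=8=\l_k^{(n)}$; (ii) $k\in\Xi_n^f$: exactly one coordinate equals $n$ and none is $0$, so orbit size $8$ with $c_j^{(n)}=\tfrac12$, sum $=4$; or some coordinate is $0$ and none is $n$, orbit size $4$ with $c_j^{(n)}=1$, sum $=4$; (iii) $k\in\Xi_n^e$: analogous bookkeeping (two coordinates on the boundary) gives sum $=2$; (iv) $k\in\Xi_n^v$: all coordinates $0$ or $n$, and the orbit/weight product collapses to $1$. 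I would present this as a short table or case list rather than grinding each subcase in prose.

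The main obstacle — and the only place where care is genuinely needed — is keeping the bookkeeping in case (ii) and (iii) correct, because now a boundary point of $\Xi_n$ can fail to be interior for \emph{two distinct reasons} (a coordinate is $0$, or a coordinate is $\pm n$), and these produce different orbit sizes but, by design, the same total weight $\l_k^{(n)}$. In $d=2$ there were only two such subcases; in $d=3$ there are more combinations (one coordinate $n$ vs. one coordinate $0$ vs. one of each, etc.), so I would be careful to enumerate them by the pair (number of zero coordinates, number of $\pm n$ coordinates) and verify in each that $2^{(\#\text{nonzero})}\cdot 2^{-(\#\text{equal to }\pm n)}$ matches $\l_k^{(n)}$. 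Once the identity is in hand, the final sentence ``in particular, it is exact for $\Pi_{2n-1}^3$'' is immediate from $\Pi_{2n-1}^3\subset\Pi_{2n-1}^*$, since the defining conditions $k_\nu+k_\mu\le n$ for $1\le\nu<\mu\le3$ are implied by $k_1+k_2+k_3\le 2n-1$ together with $k_i\ge0$ — or more simply, any monomial of total degree $\le 2n-1$ expands into products $T_{k_1}T_{k_2}T_{k_3}$ with $k_1+k_2+k_3\le 2n-1$, hence $k_\nu+k_\mu\le 2n-1$, and one checks these lie in the span defining $\Pi_{2n-1}^*$ (degree $2n-1$ rather than $n$ in the superscript, so the pairwise bound is $2n-1<2n$, giving membership).
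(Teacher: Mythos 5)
Your proof is correct and follows essentially the same route as the paper: reduce to the trigonometric cubature of Theorem~\ref{thm:cuba1-3d} via $t_i=\cos 2\pi x_i$ and verify the orbit--weight identity $\sum_{j\in k\sigma}c_j^{(n)}=\l_k^{(n)}$, which your invariant $2^{\#\{i:\,j_i\neq 0\}}\cdot 2^{-\#\{i:\,|j_i|=n\}}=2^{3-(z+p)}$ organizes somewhat more systematically than the paper's case-by-case list. One correction to a side remark: the normalization your computation actually produces is $\tfrac{1}{2n^3}$, not $\tfrac{1}{2n^2}$ as printed in \eqref{cubaT3d} --- that is a typo in the statement (the weights must sum to $1$, and $\sum_{k\in\Xi_n}\l_k^{(n)}=\sum_{k\in X_n^*}c_k^{(n)}=|X_n|=2n^3$), not a ``factor of $2$ absorbed in the weights'' as you suggest.
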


\begin{proof}
Let $g(x) = f(\cos2\pi x_1,\cos 2\pi x_2, \cos 2\pi x_3)$. Then $g$ is even in each of its
variables and $g(\frac{k}{2n}) = f(z_k)$. Applying the first cubature formula
in \eqref{cuba1} to $g(x)$, we see that \eqref{cubaT} follows from the
following identity,
$$
\sum_{k \in X^*_n}  c_k^{(n)} g(\tfrac{k}{2n}) = \sum_{k \in \Xi_n} \l_k^{(n)}
  f(z_k).
$$
This identity is proved in the same way that the corresponding identity in
Theorem \ref{thm:cubaT} is proved. Let $k \sigma$ denote the set of distinct
elements in $\{ k \sigma : \sigma \in \{-1,1\}^3\}$; then $g(\frac{k}{2n})$ takes the
same value on all points in $k \sigma$. If $k \in X_n^*$, $k_i \ne  0$ for $i =1,2,3$,
then $k \sigma$ contains 8 points; if exactly one $k_i$ is zero then $k \sigma$
contains 4 points; if exactly two $k_i$ are zero then $k \sigma$ contains one point;
and, finally, if $k=(0,0,0)$ then $k \sigma$ contains one point. In the case of
$k_i \ne 0$ for $i=1,2,3$, $\sum_{j\in k\sigma} c_k^{(n)}
g(\frac{j}{2n}) = 8 g(\frac{k}{2n})$ if $k \in X_n^\circ$,
$\sum_{j\in k\sigma} c_k^{(n)} g(\frac{j}{2n}) = 4 g(\frac{k}{2n})$ if $k \in X_n^f$,
and $\sum_{j\in k\sigma} c_k^{(n)} g(\frac{j}{2n}) = 2 g(\frac{k}{2n})$ if $k \in X_n^e$.
The other cases are treated similarly. Thus, \eqref{cubaT3d} holds for
$\Pi_{2n-1}^*$.

Finally, the definition of $\Pi_n^*$ shows readily that it
contains
$$
   \Pi_n^3 = \mathrm{span} \{ T_{k_1}(x_1)T_{k_2}(x_2)T_{k_3}(x_3):
       k_1,k_2,k_3 \ge 0, \, 0 \le k_1+k_2+k_3 \le n\}
$$
as a subspace. In particular, $\Pi_{2n-1}^*$ contains $\Pi_{2n-1}^3$ as a
subset.
\end{proof}

We note that $\Pi_{2n-1}^*$ contains $\Pi_{2n-1}^3$ as a subspace, but it does
not contain $\Pi_{2n}^3$ since $T_{n}(x_1)T_n(x_2)$ is in $\Pi_{2n}^3$ but
not in $\Pi_{2n-1}^3$. Hence, the cubature \eqref{cubaT3d} is of degree $2n-1$.
A trivial cubature formula of degree $2n-1$ for $W_0$ can be derived by
taking the product of Gaussian quadrature of degree $2n-1$ in one variable,
which has exactly $n^3$ nodes. In contrast, according to \eqref{Xi3d},  the number
of nodes of our cubature \eqref{cubaT} is in the order of $n^3/4 + \CO(n^2)$,
about a quarter of the product formula. As far as we know, this is the best
that is available at the present time. On the other hand, the lower bound for
the number of nodes states that a cubature formula of degree $2n-1$ needs
at least $n^3/6 + \CO(n^2)$ nodes. It is, however, an open question if
there exist formulas with number of nodes attaining this theoretic lower bound.

Recall the cubature \eqref{cubaT3d} is derived by choosing the spectral set
as a rhombic dodecahedron. One natural question is how to choose a spectral
set that tiles $\RR^3$ by translation so that the resulted cubature formula is
of degree $2n-1$ and has  the smallest number of nodes possible. Among the
regular lattice tiling, the rhombic dodecahedron appears to lead to the
smallest number of nodes.

Just as Theorem \ref{thm:cubaU}, we can also derive a cubature formula of
degree $2n-5$ for $W_1$ from Theorem \ref{thm:cuba1-3d}. We omit the proof
as it follows exactly as in Theorem \ref{thm:cubaU}.

\begin{thm} \label{thm:cubaU3d}
The cubature formula
\begin{equation} \label{cubaU3d}
 \frac{1}{\pi^3} \int_{[-1,1]^3} f(t) W_1(t) dt
        = \frac{4}{n^3} \sum_{k \in \Xi_n^\circ}
        \sin^2 \tfrac{k_1 \pi}{n}\sin^2 \tfrac{k_2 \pi}{n} \sin^2 \tfrac{k_3 \pi}{n}f(z_k)
\end{equation}
is exact for $\Pi_{2n-5}^*$; in particular, it is exact for $\Pi_{2n-5}^3$.
\end{thm}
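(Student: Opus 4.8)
The plan is to mimic the proof of Theorem~\ref{thm:cubaU} exactly, now in three variables. The idea is that the right-hand side of \eqref{cubaU3d} is, up to the change of variables \eqref{t-x3d}, the first cubature formula in \eqref{cuba1-3d} applied to the even function
$$
  g(x) = \sin(2\pi(k_1+1)x_1)\sin(2\pi(k_2+1)x_2)\sin(2\pi(k_3+1)x_3)\sin 2\pi x_1\sin 2\pi x_2\sin 2\pi x_3,
$$
which under \eqref{t-x3d} becomes $\sin^2\theta_1\sin^2\theta_2\sin^2\theta_3\, U_{k_1}(t_1)U_{k_2}(t_2)U_{k_3}(t_3)$ with $t_i=\cos 2\pi x_i=\cos\theta_i$. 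So the first step is to check that, for $0\le k_1+k_2+k_3\le 2n-5$ (or more generally whenever each $k_\nu+k_\mu\le 2n-1$, which is what is needed for $\Pi_{2n-5}^*$), this function $g$ lies in $\CT_{2n-1}^*$: expanding the product of sines into exponentials shows that every frequency appearing has the form $(\pm(k_1+1)\pm1,\ \pm(k_2+1)\pm1,\ \pm(k_3+1)\pm1)$, and one checks these frequency vectors satisfy $|m_\nu\pm m_\mu|\le 2n-1$, i.e.\ lie in $\Lambda_{2n-1}^{\dag*}$, under the stated degree restriction.

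The second step is the key observation that makes the boundary weights disappear: the factor $\sin 2\pi x_1\sin 2\pi x_2\sin 2\pi x_3$ vanishes whenever any $x_i\in\{0,\tfrac12\}$, hence $g$ vanishes on every point of $X_n^*$ that has a zero coordinate or lies on a face $x_i=\pm\tfrac12$ of the cube, and in particular on all of $X_n^f\cup X_n^e\cup X_n^v$ and on the coordinate hyperplanes. Therefore in the first cubature of \eqref{cuba1-3d} only the interior points $k\in X_n^\circ$ with all coordinates nonzero survive, and by the symmetry of $X_n^*$ under sign changes these come in groups of $8$ (the orbit $k\sigma$, $\sigma\in\{-1,1\}^3$), on each of which $g$ takes the same value. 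Collapsing each orbit to its representative in $\Xi_n^\circ$ and using $|\det B|=2n^3$ gives the factor $\tfrac{1}{2n^3}\cdot 8=\tfrac{4}{n^3}$. Finally one evaluates $g$ at the point $k/(2n)$: the factor $\sin 2\pi x_i$ at $x_i=k_i/(2n)$ is $\sin(k_i\pi/n)$, producing the weight $\prod_i\sin^2(k_i\pi/n)$, while $\sin(2\pi(k_i+1)x_i)/\sin 2\pi x_i = U_{k_i}(t_i)$ at $t_i=z_{k_i}$; this yields \eqref{cubaU3d} for the basis polynomials $U_{k_1}(t_1)U_{k_2}(t_2)U_{k_3}(t_3)$.

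The last step is to argue that these $U$-products span enough: every polynomial $p\in\Pi_{2n-5}^*$ can be written as $W_1^{-1/2}W_1^{1/2}p$, but more directly, $\{U_{k_1}(t_1)U_{k_2}(t_2)U_{k_3}(t_3): k_\nu+k_\mu\le 2n-5\}$ is a basis of $\Pi_{2n-5}^*$ (change of basis from the $T$-product basis is triangular), and $\Pi_{2n-5}^3\subset\Pi_{2n-5}^*$ as shown in the proof of Theorem~\ref{thm:cubaT3d}; so exactness on the basis gives exactness on the whole space by linearity. I expect the only mildly delicate point to be the bookkeeping in the first step—verifying that the frequencies of $g$ stay inside $\Lambda_{2n-1}^{\dag*}$—but this is routine once the expansion is written out, and the degree bound $2n-5$ is precisely what is forced by the worst frequency $(k_1+2,k_2+2,\cdot)$ when two indices are large; everything else is a transcription of the two-dimensional argument.
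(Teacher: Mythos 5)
Your proposal is correct and is precisely the argument the paper intends: the paper omits this proof, stating only that it "follows exactly as in Theorem~\ref{thm:cubaU}", and your three steps (frequency bookkeeping showing $g\in\CT_{2n-1}^*$, vanishing of the $\sin 2\pi x_1\sin 2\pi x_2\sin 2\pi x_3$ factor on all boundary and coordinate-zero nodes so that only full size-$8$ orbits in $X_n^\circ$ survive, and the triangular $U$-to-$T$ change of basis for $\Pi_{2n-5}^*$) are exactly that transcription. The only blemish is the parenthetical ``each $k_\nu+k_\mu\le 2n-1$'': the index condition for $\Pi_{2n-5}^*$ is $k_\nu+k_\mu\le 2n-5$, which then forces the frequencies $m_i\in\{\pm k_i,\pm(k_i+2)\}$ to satisfy $|m_\nu\pm m_\mu|\le 2n-1$ --- as your closing remark about the worst frequency makes clear you already know.
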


\subsection{A compact formula for a partial sum}
In order to obtain the compact formula for the interpolation function, we
follow \cite{LX} and use homogeneous coordinates and embed the rhombic
dodecahedron into the plane $t_1+t_2+t_3+t_4 =0$ of $\RR^4$. Throughout
the rest of this paper, we adopt the convention of using bold letters,
such as $\tb$, to denote the points in the space
\begin{align*}
 \RR_H^4 := \left\{ \tb = (t_1,t_2,t_3,t_4)\in \RR^4:
  t_1+t_2+t_3+t_4=0 \right\}.
\end{align*}
In other words, the bold letters such as $\tb$ and $\kb$ will always mean
homogeneous coordinates.  The transformation between $x \in \RR^3$ and
$\tb \in \RR_H^4$ is defined by
\begin{equation}\label{coordinate1}
      \begin{cases} x_1= t_2 + t_3 \\
            x_2 =   t_1+ t_3 \\
            x_3=    t_2 + t_1
          \end{cases}
         \quad \Longleftrightarrow \quad
       \begin{cases} t_1=  \tfrac{1}{2}(-x_1+x_2+x_3) \\
            t_2 =   \tfrac{1}{2}(x_1-x_2+x_3) \\
            t_3=   \tfrac{1}{2} (x_1+x_2-x_3) \\
            t_4 =  \tfrac{1}{2}(-x_1-x_2-x_3).
       \end{cases}
 \end{equation}
In this homogenous coordinates, the spectral set  $\Omega_B$ becomes
\begin{equation}\label{Omega}
   \Omega_B = \left\{\tb \in \RR_H^4:
             -1< t_i-t_j  \leq  1,  1\leq i < j \leq 4\right\}.
\end{equation}

We now use homogeneous coordinates to describe $\Lambda_n^{\dag*}$
defined in \eqref{LambdaDag*}. Let $\ZZ_H^4 : = \ZZ^4 \cap \RR_H^4$ and
$$
   \HH:= \{\jb \in \ZZ_H^4:   j_1 \equiv j_2 \equiv j_3 \equiv j_4 \mod 4\}.
$$
In order to keep the elements as integers, we make the change of variables
\begin{align} \label{eq:j-k}
\begin{split}
 & j_1= 2 (-k_1+k_2+k_3), \, \quad j_2= 2 (k_1-k_2+k_3),\\
 & j_3= 2 (k_1+k_2-k_3),  \qquad j_4= 2 (-k_1-k_2-k_3)
\end{split}
\end{align}
for $k=(k_1,k_2,k_3)\in \Lambda_n^{\dag*}$. It then follows that
$\Lambda_n^{\dag*}$ in homogeneous coordinates becomes
\begin{equation*}%\label{GGn}
 \GG_n:= \{j\in \HH:  j_1 \equiv j_2 \equiv j_3 \equiv j_4 \equiv 0 \mod 2,\,
     - 4 n \le j_\nu- j_\mu \le 4 n, 1 \le \nu,\mu \le 4 \}.
\end{equation*}
We could have changed variables without the factor 2, setting
$j_1 = -k_1+k_2+k_3$ etc. We choose the current change of variables so
that we can use some of the computations in \cite{LX}. In fact, the set
\begin{equation}\label{HHn}
  \HH_n^* := \{j\in \HH:  - 4 n \le j_\nu- j_\mu \le 4 n, 1 \le \nu,\mu \le 4 \}
\end{equation}
is used in \cite{LX}. The main result of this subsection is a compact
formula for the partial sum
\begin{equation}\label{Dn3D}
  D_n(x) := \sum_{k \in \Lambda_n^{\dag*}} \e_k(x)
     = \sum_{\jb \in \GG_n} \e_\jb(\tb) =: D^*_n(\tb),
    \qquad \e_\jb(\tb) := \e^{\frac{\pi i}{2} \jb \cdot \tb},
\end{equation}
where $x$ and $\tb$ are related by \eqref{coordinate1} and the middle
equality follows from the fact that $\Lambda_n^{\dag*} = \GG_n$ under
this change of variables. In fact, by \eqref{coordinate1} and
\eqref{eq:j-k}, we have
\begin{align*}
k \cdot x &=  k_1 (t_2+t_3)+ k_2 (t_1+t_3)+ k_3 (t_1+t_2) \\
       &=  (k_2+k_3)t_1+  (k_1+k_3)t_2+  (k_1+k_2) t_3  \\
      & =  \frac{1}{4}\left[(j_1-j_4) t_1+  (j_2-j_4)t_2+  (j_3-j_4) t_3 \right]
      =  \frac{1}{4} \jb \cdot \tb
\end{align*}
where in the last step we have used the fact that $\tb \in \RR_H^4$.
The compact formula of $D_n(\tb)$ is an essential part of the compact
formula for the interpolation function.

\begin{thm}
For $n \ge 1$,
$$
   D_n^*(\tb) = \Theta_{n+1}(\tb) - \Theta_{n}(\tb) - \left(  \Theta_n^{\mathrm{odd}}(\tb)
      - \Theta_{n-2}^{\mathrm{odd}}(\tb) \right),
$$
where
$$
   \Theta_n(\tb) = \prod_{i=1}^4 \frac{\sin \pi n t_i}{\sin \pi t_i},
$$
and for $n \ge 1$,
$$%\begin{align*}
\Theta_{n}^{\mathrm{odd}}(\tb) =
  \prod_{i=1}^4 \frac{\sin (n+2)\pi t_i}{\sin 2 \pi t_i}
    \sum_{j=1}^4 \frac{\sin n \pi t_j}{\sin (n+2) \pi t_j}, \quad \hbox{if $n = $ even},
$$
and
$$
 \Theta_{n}^{\mathrm{odd}}(t)  =  \prod_{i=1}^4 \frac{\sin(n+1)\pi t_i}{\sin 2 \pi t_i}
    \sum_{j=1}^4 \frac{\sin (n+3) \pi t_j}{\sin (n+1) \pi t_j}, \quad \hbox{if $n = $ odd}.
$$%\end{align*}
\end{thm}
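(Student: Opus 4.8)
\emph{Plan.} The idea is to realize $D_n^*(\tb)$ as the ``even-coordinate part'' of the Dirichlet-type kernel attached to the set $\HH_n^*$ of \cite{LX}, to extract that part by a parity projection, and then to insert the compact formula already available there; a second, self-contained route instead expands everything into products of one-dimensional Dirichlet kernels.

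\emph{Reduction.} By \eqref{HHn} and \eqref{eq:j-k}, $\GG_n$ is exactly the set of $\jb\in\HH_n^*$ all of whose coordinates are even, and since $\jb\in\HH$ forces $j_1\equiv j_2\equiv j_3\equiv j_4\pmod 4$, ``all coordinates even'' is the single congruence $j_1\equiv 0\pmod 2$. Put $\sb=\tfrac12(1,1,1,-3)\in\RR_H^4$; then $\jb\cdot\sb=-2j_4$ for every $\jb\in\ZZ_H^4$, so $\e_\jb(\sb)=(-1)^{j_4}=(-1)^{j_1}$ on $\HH$, and $\tfrac12(1+\e_\jb(\sb))$ is the indicator of the even class. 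Hence
\[
 D_n^*(\tb)=\sum_{\jb\in\HH_n^*}\tfrac12\bigl(1+\e_\jb(\sb)\bigr)\,\e_\jb(\tb)=\tfrac12\bigl[E_n(\tb)+E_n(\tb+\sb)\bigr],\qquad E_n(\tb):=\sum_{\jb\in\HH_n^*}\e_\jb(\tb).
\]
One now substitutes the compact formula for $E_n$ (or an easily equivalent kernel) from \cite{LX}, evaluates at $\tb$ and at $\tb+\sb$, and simplifies: because $\sb$ shifts three of the four coordinates by $\tfrac12$ modulo $1$, the two evaluations differ only in trigonometric phases whose average is precisely the even/odd splitting, and the algebra produces the claimed combination $\Theta_{n+1}-\Theta_n-(\Theta_n^{\mathrm{odd}}-\Theta_{n-2}^{\mathrm{odd}})$, with the even/odd-$n$ dichotomy of $\Theta_n^{\mathrm{odd}}$ inherited from the parity of the index at which the half-integer shift becomes an odd multiple of $\tfrac12$.

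\emph{Self-contained variant.} Writing $\jb=2\mb$ on $\GG_n$ gives $\e_\jb(\tb)=\e^{\pi i\,\mb\cdot\tb}$ with $\mb\in\ZZ_H^4$, all $m_i$ of one parity and $|m_\nu-m_\mu|\le 2n$. Splitting off the all-even $\mb=2\kb$ and the all-odd $\mb=2\kb+\mathbf 1$, and using $\mathbf 1\cdot\tb=0$ (valid since $\tb\in\RR_H^4$), both pieces become sums of $\e^{2\pi i\kb\cdot\tb}$ over the integer points of the polytope $P_n:=\{\kb\in\ZZ^4:\ |k_\nu-k_\mu|\le n,\ 1\le\nu<\mu\le 4\}$ taken modulo the line $\RR\mathbf 1$, the two pieces contributing precisely the classes with $\sum_i k_i\equiv 0$ and $\equiv 2\pmod 4$, i.e.\ all classes with $\sum_i k_i$ even. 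The one-dimensional identity $\tfrac{\sin\pi m\theta}{\sin\pi\theta}=\sum_{l=0}^{m-1}\e^{\pi i(m-1-2l)\theta}$ gives $\Theta_m(\tb)=\sum_{\kb\in\{0,\dots,m-1\}^4}\e^{2\pi i\kb\cdot\tb}$ for $\tb\in\RR_H^4$, so the telescoping difference $\Theta_{n+1}(\tb)-\Theta_n(\tb)$ picks out each class of $P_n$ exactly once (represented by its point with maximal coordinate $n$ inside $\{0,\dots,n\}^4$); therefore $D_n^*=(\Theta_{n+1}-\Theta_n)-\bigl(\text{odd-sum classes of }P_n\bigr)$. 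Expanding the stated formulas for $\Theta_m^{\mathrm{odd}}$ by the same identity shows that $\Theta_m^{\mathrm{odd}}(\tb)$ is a sum of exponentials over the ``exactly one odd coordinate'' slice of a box of size roughly $m$, so $\Theta_n^{\mathrm{odd}}-\Theta_{n-2}^{\mathrm{odd}}$ is the corresponding shell, and a coset-by-coset count identifies that shell with the odd-sum classes of $P_n$.

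\emph{Main obstacle.} The delicate point is this last identification of the odd-parity correction. The odd-sum part of $P_n$ is not a box, so it does not factor into a single product of one-dimensional Dirichlet kernels; it splits according to which coordinate carries the ``$+1$'', which is exactly why the closed form for $\Theta_m^{\mathrm{odd}}$ carries the outer sum $\sum_{j=1}^4$ and the $\sin 2\pi t_i$ denominators, and why it is case-split on the parity of $m$. The bookkeeping that must be done carefully is: which faces and edges of $[-n,n]^3$ survive, the exact widths of the telescoping shells (width $2$ here, against width $1$ for $\Theta_{n+1}-\Theta_n$), and the degenerate conventions $\Theta_0^{\mathrm{odd}}=\Theta_{-1}^{\mathrm{odd}}=0$ that make the statement correct for the smallest $n$. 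I would handle all of this by reusing the corresponding computation of \cite{LX} for the kernel over $\HH_n^*$ rather than redoing it by hand.
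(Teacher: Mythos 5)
Your two reductions are both correct, and the first one --- the parity projection $D_n^*(\tb)=\tfrac12\bigl[E_n(\tb)+E_n(\tb+\sb)\bigr]$ with $\sb=\tfrac12(1,1,1,-3)$, where $\e_\jb(\sb)=(-1)^{j_1}$ on $\HH$ --- is a genuinely different and cleaner starting point than the paper's. The paper also begins by writing $D_n^*=(\Theta_{n+1}-\Theta_n)-D_n^{\mathrm{odd}}$ with $D_n^{\mathrm{odd}}$ the sum over $\HH_n^{\mathrm{odd}}=\HH_n^*\setminus\GG_n$, quoting \cite{LX} only for $\sum_{\jb\in\HH_n^*}\e_\jb(\tb)=\Theta_{n+1}(\tb)-\Theta_n(\tb)$. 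But after that reduction the entire content of the theorem is the evaluation of the odd-parity correction in the stated closed form, and that is precisely the step you do not carry out in either route. Your fallback --- ``reuse the corresponding computation of \cite{LX}'' --- does not close the gap: \cite{LX} supplies the sum over all of $\HH_n^*$, not over its odd part, and the paper must redo the inclusion--exclusion over the four parallelepipeds $\HH_n^{(k)}$ from scratch with the extra constraint that $|k|_J$ be odd. That constraint is exactly what prevents the inner sums from factoring into a single product of one-dimensional Dirichlet kernels; the paper resolves it by introducing the even/odd kernels $D_n^E$, $D_n^O$, sorting out which products $D_n^OD_n^OD_n^O$, $D_n^OD_n^ED_n^E$, etc.\ occur, and applying the identity $abc-(a-1)(b-1)(c-1)=ab+ac+bc-a-b-c+1$ before the final trigonometric simplification. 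The outer sum $\sum_{j=1}^4$, the $\sin 2\pi t_i$ denominators, and the even/odd-$n$ dichotomy in $\Theta_n^{\mathrm{odd}}$ only emerge from that computation; they are not visible from the set-theoretic description of the odd-sum classes alone.

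Concretely, in your first route you would still have to evaluate $\Theta_m(\tb+\sb)$ explicitly (all four denominators become $\cos\pi t_i$, and the numerators become $\pm\sin\pi m t_i$ or $\pm\cos\pi m t_i$ according to the parity of $m$) and then verify the four-variable identity $\tfrac12\bigl[E_n(\tb)-E_n(\tb+\sb)\bigr]=\Theta_n^{\mathrm{odd}}(\tb)-\Theta_{n-2}^{\mathrm{odd}}(\tb)$, i.e.\ that a difference of two single products equals the stated difference of two sums of four products each. That is a real computation, not something ``the algebra produces'' automatically, and it is where the theorem actually lives. The surrounding combinatorics in your self-contained variant (the identification of $\GG_n$ with the even-sum classes of $P_n$, and of $\Theta_{n+1}-\Theta_n$ with all classes via the maximal-coordinate representative) is correct and matches the paper; what is missing is the closed-form evaluation of the odd shell.
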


\begin{proof}
By definition, $\GG_n$ is a subset of $\HH_n^*$ that contains elements with
all indices being even integers. For technical reasons, it turns out to be easier
to work with $\HH_n^* \setminus \GG_n$. In fact, the sum over $\HH_n^*$ has
already been worked out in \cite{LX}, which is
$$
    \sum_{\jb \in \HH_n^*}\phi_\jb(\tb) = \prod_{i=1}^4 \frac{\sin(n+1)\pi t_i}{\sin \pi t_i}-
     \prod_{i=1}^4 \frac{\sin n \pi t_i}{\sin \pi t_i} = \Theta_{n+1}(\tb) - \Theta_n(\tb).
$$
Thus, we need to find only the sum over odd indices, that is, the sum
$$
 D_n^{\mathrm{odd}}(\tb) := \sum_{\jb \in \HH_n^{\mathrm{odd}}} \e_{\jb}(\tb),
      \qquad \HH_n^{\mathrm{odd}} := \HH_n^* \setminus \GG_n.
$$
Just as in \cite{LX}, the index set $\HH_n^{\mathrm{odd}}$ can be partitioned
into four congruent parts, each within a parallelepiped, defined by
$$
\HH_n^{(k)}: = \left\{ \jb \in \HH_n^{\mathrm{odd}}:
    0\leq j_l -j_k \leq 4 n, \,  l \in \NN_4\right\}
$$
for $k \in \NN_4$. Furthermore, for each index set $J$, $\emptyset\subset
J\subseteq  \NN_4$, define
\begin{align*}
\HH_n^{J}:= \left\{\kb \in\HH_n^{\mathrm{odd}}: k_i = k_j, \, \forall i,j \in J;
\text{ and } \, 0\leq  k_i - k_j \leq 4n,\, \forall j\in J, \ \forall i\in \NN_4\setminus J\right\}.
\end{align*}
Then we have
\begin{align*}
\HH_n^{\mathrm{odd}} =  \bigcup_{j\in \NN_4} \HH_n^{(j)} \qquad \hbox{and}\qquad
\HH_n^{J} = \bigcap_{j \in J} \HH_n^{(j)}.
\end{align*}

Using the inclusion-exclusion relation of subsets, we have
$$
   D_n^{\mathrm{odd}}(\tb) =   \sum_{\emptyset\subset J\subseteq \NN_4}
     (-1)^{|J|+1} \sum_{\kb \in \HH_n^J}  e^{\frac{\pi i}{2}\, \kb \cdot \tb}.
$$
Fix $j\in J$, using the fact that $t_j = - \sum_{i\ne j} t_i$,  we have
\begin{align*}
\sum_{\kb \in \HH_n^J} e^{\frac{\pi i}{2}\, \kb \cdot \tb}
=&  \sum_{\kb\in \HH_n^J}   e^{\frac{\pi i}{2}\, \sum_{l\in \NN_4\setminus J}
   (k_l-k_j) t_l }
= \sum_{\kb\in \HH_n^J}   \prod_{l\in \NN_4\setminus J} e^{\frac{\pi i}{2} (k_l-k_j) t_l }.
\end{align*}
Since $\kb \in \HH_n^J$ implies, in particular, $k_i \equiv k_j \mod{4}$, we obtain
\begin{align*}
\sum_{\kb \in \HH_n^J} e^{\frac{\pi i}{2}\, \kb \cdot \tb}
=   \prod_{l\in \NN_4\setminus J} \sum_{\substack{  0\leq k_l-k_j \leq 4n \\
 \kb \in \HH_n^J } }  e^{\frac{\pi i}{2} \, (k_l-k_j) t_l}
= {\prod_{l\in \NN_4\setminus J}} \sum_{\substack{ 0 \le k_l \le n \\
     |k|_J \, \mathrm{odd}} }e^{2 \pi i\, k_l t_l},
\end{align*}
where $|k|_J:=\sum_{l \in \NN_4\setminus J} k_l$. The last equation needs
a few words of explanation: if $4k_l'= k_l -k_j$, then using the fact that
$k_i =k_j$, $\forall i,j\in J$ for $k \in \HH_n^J$ and $k_1+k_2+k_3+k_4 =0$,
we see that $\frac14\sum_{l \in \NN_4\setminus J} (k_l-k_j) = - k_j$, which is odd by
the definition of $\HH_n^J$; on the other hand, assume that
$\sum_{l \in \NN_4\setminus J} k_l'$ is odd, then we define
$k_j = - \sum_{l \in \NN_4\setminus J} k_l'$ for all $j \in J$ and define
$k_l =4 kl' + k_j$, so that all components of $k$ are odd and $k \in \HH_n^J$.

The condition that $|k|_J$ is an odd integer means that the last term is
not a simple product of sums. Setting
\begin{align*}
  D_n^O(t): = & \sum_{j=0, j\,  \mathrm{odd}}^{n} \e^{  2 \pi i j t } =
     \frac{\e^{2\pi i t} (1-\e^{4 \pi i \lfloor \frac{n+1}{2} \rfloor t})}
          {1-\e^{4 \pi i t}}, \\
  D_n^E(t) := & \sum_{i=0, i\,  \mathrm{even}}^n \e^{  2 \pi i j t } =
     \frac{ 1-\e^{4 \pi i \lfloor \frac{n+2}{2} \rfloor t} }
          {1-\e^{4 \pi i t}},
\end{align*}
we see that, up to a permutation, only products $D_n^OD_n^OD_n^O$
and $D_n^OD_n^ED_n^E$ are possible for triple products ($|J|=3$), only
$D_n^OD_n^E$ is possible for double products ($|J|=2$), only $D_n^O$
is possible ($|J|=1$), and there is a constant term. Thus, using the fact
that $a b c - (a-1)(b-1)(c-1) =
a b +ac+bc - a -b-c +1$, we conclude that
\begin{align*}
& D_n^{\mathrm{odd}}(\tb) =    \sum_{(i_1,i_2,i_3)\in \NN_4}
   D_n^O(t_{i_1})D_n^O(t_{i_2})D_n^O(t_{i_3})  \\
             & + D_n^O(t_1)
                        \left[ D_n^E(t_2)D_n^E(t_3)D_n^E(t_4)
             - (D_n^E(t_2)-1)(D_n^E(t_3)-1)(D_n^E(t_4)-1) \right] \\
             & + D_n^O(t_2)
                        \left[ D_n^E(t_1)D_n^E(t_3)D_n^E(t_4)
             - (D_n^E(t_1)-1)(D_n^E(t_3)-1)(D_n^E(t_4)-1) \right] \\
             & + D_n^O(t_3)
                        \left[ D_n^E(t_1)D_n^E(t_2)D_n^E(t_4)
             - (D_n^E(t_1)-1)(D_n^E(t_2)-1)(D_n^E(t_4)-1) \right] \\
             & + D_n^O(t_4)
                        \left[ D_n^E(t_1)D_n^E(t_2)D_n^E(t_3)
             - (D_n^E(t_1)-1)(D_n^E(t_2)-1)(D_n^E(t_3)-1) \right],
\end{align*}
where the first sum is over all distinct triple integers in $\NN_4$.

Assume that $n$ is an even integer. A quick computation shows that
\begin{align*}
  D_n^O(t_1)D_n^E(t_2)D_n^E(t_2)D_n^E(t_4)
      =  \prod_{j=2}^4 \frac{\sin \pi (n+2) t_i} {\sin 2 \pi t_i}
     \frac{\sin \pi n t_1}{\sin 2 \pi t_1}.
\end{align*}
Furthermore, we see that
\begin{align*}
  & D_n^O(t_2)D_n^O(t_3)D_n^O(t_4) -
     D_n^O(t_1) (D_n^E(t_2)-1)(D_n^E(t_3)-1)(D_n^E(t_4)-1)   \\
  &= \prod_{j=2}^4 \frac{\sin \pi n t_i}{\sin 2 \pi t_i}
     \left[ \e^{i\pi n t_1} - \frac{ \e^{-2\pi i n t_1} -\e{i\pi n t_1}}{1-\e^{4 \pi i t_1}} \right]
    = - \prod_{j=2}^4 \frac{\sin \pi n t_i} {\sin 2 \pi t_i}
           \frac{\sin \pi (n-2) t_1}{\sin 2 \pi t_1}.
\end{align*}
Adding the two terms together and then summing over the permutation of the
sum, we end up the formula for $D_n^{\mathrm{odd}}(\tb)$ when $n$ is even.
The case of $n$ odd can be handled similarly.
\end{proof}

Let us write down explicitly the function $D_n(x)$ defined in \eqref{Dn3D}
in $x$-variables. Using the elementary trigonometric identity and
\eqref{coordinate1}, we see that
\begin{align*}
 4 \prod_{i=1}^4\sin \alpha \pi t_i
&  = (\cos \alpha \pi (x_2 - x_1)- \cos \alpha \pi x_3)
   (\cos \alpha \pi (x_2 + x_1)- \cos \alpha \pi x_3)  \\
& = \cos^2 \alpha x_1 + \cos^2 \alpha x_2+ \cos^2 \alpha x_3
     - 2 \cos \alpha x_1 \cos \alpha x_2 \cos \alpha x_3 -1,
\end{align*}
so that we end up with the compact formula
\begin{align} \label{Dn(x)}
   D_n(x) = \wt \Theta_{n+1}(x) - \wt \Theta_n(x) -
       \left( \wt \Theta_n^{\mathrm{odd}}(x )-  \wt \Theta_{n-2}^{\mathrm{odd}}(x)
           \right),
\end{align}
where
$$
  \wt   \Theta_n(x) = \frac{\cos^2 n \pi x_1+\cos^2 n \pi x_2+\cos^2 n \pi x_3
     -2 \cos n \pi x_1\cos n \pi x_2\cos n \pi x_3 -1}
     {\cos^2 \pi x_1+\cos^2 \pi x_2+\cos^2 \pi x_3
     -2 \cos \pi x_1\cos \pi x_2\cos \pi x_3 -1},
$$
$$%\begin{align*}
\wt \Theta_{n}^{\mathrm{odd}}(x) =
 \wt \Theta_{\frac{n+2}{2}}(2x)
    \sum_{j=1}^4 \frac{\sin n \pi t_j}{\sin (n+2) \pi t_j}, \quad \hbox{if $n = $ even}, \\
$$
and
$$
\wt  \Theta_{n}^{\mathrm{odd}}(t)  =   \wt \Theta_{\frac{n+1}{2}}(2x)
    \sum_{j=1}^4 \frac{\sin (n+3) \pi t_j}{\sin (n+1) \pi t_j}, \quad \hbox{if $n = $ odd},
$$%\end{align*}
in which $t_i$ is given in terms of $x_j$ in \eqref{coordinate1}. As a result of
this explicit expression, we see that $D_n(x)$ is an even function in each
$x_i$.

\subsection{Boundary of the rhombic dodecahedron}
In order to develop the interpolation on the set $X_n^*$, we will need to
understand the structure of the points on the boundary of $\Lambda_n^\dag
= \ZZ^3 \cap \Omega_B$. As $\Omega_B$ is a rhombic dodecahedron, we
need to understand the boundary of this 12-face polyhedron, which has been
studied in detail in \cite{LX}. In this subsection, we state the necessary
definitions and notations on the boundary of $\Omega_B$, so that the
exposition is self-contained. We refer to further details and proofs
to \cite{LX}.

Again we use homogeneous coordinates. For $i,j \in \NN_4:=\{1,2,3,4\}$
and $i\ne j$, the (closed) faces of $\Omega_B$ are
$$
         F_{i,j}  = \{ \tb \in \overline{ \Omega}_H: t_i - t_j =1\}.
$$
There are a total $2 \binom{4}{2}  = 12$ distinct $F_{i,j}$, each represents
one face of the rhombic dodecahedron. For nonempty subsets $I, J$ of $\NN_4$,
define
\begin{align*}
  \Omega_{I,J} :=  \bigcap_{i\in I, j\in J} F_{i,j} =
     \left\{ \tb \in \overline{ \Omega}_H:\  t_j = t_i-1, \text{ for all } i \in I,  j\in J\right\}.
\end{align*}
It is shown in \cite{LX} that $\Omega_{I,J} = \emptyset$ if and only if $I\cap J \neq \emptyset$, and  $\Omega_{I_1,J_1} \cap \Omega_{I_2,J_2} =\Omega_{I, J}$  if
  $I_1 \cup I_2=I$ and $J_1\cup J_2=J$.
These sets describe the intersections of faces, which can then be used to
describe the edges, which are intersections of faces, and vertices, which
are intersections of edges. Let
\begin{align*}
  & \CK := \left\{(I,J):  I, J \subset \NN_4; \   I\cap J = \emptyset \right\},\\
  & \CK_0 := \left\{ (I,J)\in \CK:\ i<j, \,\, \hbox{for all}\,\, (i,j) \in (I, J)\
 \right\}.
\end{align*}
We now define, for each $(I,J)\in \CK$, the boundary element $\B_{I,J}$
of the dodecahedron,
\begin{align*}
  \B_{I,J}: = \left\{\tb \in \Omega_{I,J}: \  \tb \not \in \Omega_{I_1,J_1}
\text{ for all }
  (I_1,J_1)\in \CK \text{ with } |I|+|J| < |I_1| + |J_1| \right\};
\end{align*}
it is called a face if $|I|+|J| =2$, an edge if $|I|+|J|=3$, and a vertex if $|I|+
|J|=4$. By definition, the elements for faces and edges are without boundary,
which implies that $\B_{I,J} \cap \B_{I',J'} =  \emptyset$ if $I \neq I_1$ and
$J \neq J_1$. In particular, it follows that $\B_{\{i\},\{j\}} = F_{i,j}^\circ$ and,
for example, $\B_{\{i\}, \{j,k\}} = (F_{i,j} \cap F_{i,k})^\circ$ for distinct integers
$i, j, k  \in \NN_4$.

Let  $\mathcal{G}=S_4$ denote the permutation group of four elements
and let $\sigma_{ij}$ denote the element in $\CG$ that interchanges $i$
and $j$; then $\tb \sigma_{ij} = \tb - (t_i-t_j) \e_{i,j}$. For a nonempty set
$I\subset \mathbb{N}_4$, define $\mathcal{G}_{I} := \left\{\sigma_{ij}:  i,j\in I\right\}$,
where we take $\sigma_{ij} = \sigma_{ji}$ and take $\sigma_{jj}$ as the identity
element. It follows that $\mathcal{G}_{I} $ forms a subgroup of $\mathcal{G}$
of order $|I|$. For $(I, J)\in \mathcal{K}$, we then define
\begin{align} \label{eq:[B]}
   [\B_{I,J}] := \bigcup_{\sigma\in \mathcal{G}_{I\cup J}} \B_{I,J}\sigma.
\end{align}
It turns out that $ [\B_{I,J}] $ consists of exactly those boundary elements that
can be obtained from $\B_{I,J}$ by congruent modulus $B$, and
$[\B_{I,J}]\cap [\B_{I_1,J_1}] = \emptyset$ if $(I,J) \neq (I_1,J_1)$ for $(I,J)\in
 \mathcal{K}_0$ and $(I_1,J_1)\in \mathcal{K}_0$. More importantly,
 we define, for $0 < i,j < i+j \le 4$,
\begin{align} \label{eq:Bij=[B]}
\begin{split}
  \B^{i,j}:= \bigcup_{(I,J) \in \CK_0^{i,j}} [\B_{I,J}]
\quad \hbox{with}\quad
   \CK^{i,j}_0: = &\, \left\{(I,J) \in \CK_0:\  |I| = i,\ |J| =j  \right\}.
\end{split}
\end{align}
Then the boundary of $\overline{\Omega}_B$ can be decomposed as
$$
   \overline{\Omega}_H\setminus \Omega_H^{\circ}
 =  \bigcup_{(I,J)\in \CK}  \B_{I,J}  = \bigcup_{0<i,j<i+j\le 4} \B^{i,j}.
$$

The main complication is the case of $|I|+|J|=2$, for which
we have, for example,
\begin{align} \label{[B{1,23}]}
  [\B_{\{1\},\{2,3\}}] = \B_{\{1\},\{2,3\}} \cup \B_{\{2\},\{1,3\}} \cup \B_{\{3\},\{1,2\}}.
\end{align}
The other cases can be written down similarly. Furthermore, we have
\begin{align} \label{eq:Bij}
\begin{split}
& \B_{\{1\},\{2,4\}} = \B_{\{1\},\{2,3\}} \sigma_{34}, \qquad \quad\,
\B_{\{1,2\},\{4\}} = \B_{\{1,2\},\{3\}} \sigma_{34}, \\
& \B_{\{1\},\{3,4\}} =\B_{\{1\},\{2,3\}} \sigma_{24},
\quad B_{\{1,3\},\{4\}}= \B_{\{1,2\},\{3\}} \sigma_{23}\sigma_{34}, \\
& \B_{\{2\},\{3,4\}} = \B_{\{1,2\},\{3\}} \sigma_{12} \sigma_{24}, \quad \,\,
\B_{\{2,3\},\{4\}} = \B_{\{1,2\},\{3\}} \sigma_{13}\sigma_{34},
\end{split}
\end{align}
with
\begin{align} \label{B_{1,{2,3}}}
\begin{split}
& \B_{\{1\},\{2,3\}} = \left \{(t, t-1, t-1, 2-3t):  \tfrac12 < t <\tfrac34  \right \}, \\
& \B_{\{1,2\},\{3\}}= \left\{ (1-t, 1-t, -t, 3t-2):  \   \tfrac12 < t <\tfrac34  \right\}.
\end{split}
\end{align}

If $|I|+|J| =2$ then $\B_{I,J} = \B_{\{i\},\{j\}}$ is a face and
\begin{align*} %\label{eq:B11}
\B^{1,1}   = [\B_{\{1\},\{2\}}] \cup  [\B_{\{1\},\{3\}}]\cup [\B_{\{1\},\{4\}}]\cup
   [\B_{\{2\},\{3\}}]\cup[\B_{\{2\},\{4\}}]\cup   [\B_{\{3\},\{4\}}]
\end{align*}
If $|I|+|J| =3$ then
$\B_{I,J}$ is an edge and we have
\begin{align} \label{eq:B12}
\begin{split}
 &\B^{1,2}   = [\B_{\{1\},\{2,3\}}] \cup  [\B_{\{1\},\{2,4\}}]\cup [\B_{\{1\},\{3,4\}}]\cup
   [\B_{\{2\},\{3,4\}}],\\
& \B^{2,1} =  [\B_{\{1,2\},\{3\}}]\cup  [\B_{\{1,2\},\{4\}}]\cup [\B_{\{1,3\},\{4\} }]
  \cup [\B_{\{2,3\},\{4\}}].
\end{split}
\end{align}
If $|I|+|J|=4$, then
\begin{align} \label{eq:B13}
\begin{split}
& \B^{1,3} = \left[\{(\tfrac{1}{4}, \tfrac{1}{4}, \tfrac{1}{4},
 -\tfrac{3}{4})\}\right], \quad \B^{2,2} = \left[\{(\tfrac{1}{2}, \tfrac{1}{2}, -\tfrac{1}{2},
 -\tfrac{1}{2}\})\right]\\
& \B^{3,1} = \left[\{(\tfrac{3}{4}, -\tfrac{1}{4}, -\tfrac{1}{4},  -\tfrac{1}{4})\}\right].
\end{split}
\end{align}

Recall that $\GG_n$ is $\Lambda_n^{\dag*} = \ZZ^3 \cap \overline{\Omega}_B$
in homogeneous coordinates.  We now consider the decomposition of the
boundary of $\GG_n$ according to the boundary elements of the rhombic
dodecahedron. First we denote by $\GG_n^\circ$ the points inside $\GG_n$,
\begin{align*}
  \GG_n^{\circ}:= \left\{ \jb \in \GG_n: -4n < j_\nu -j_\mu<4n, 1\le \nu,\mu \le 4\right\}
     = \left\{ \jb \in \GG_n: \tfrac{\jb}{4n} \in \Omega_B^\circ \right\}.
\end{align*}
We further define, for $0 < i, j < i+j \leq 4$,
\begin{align} \label{eq:Hn^ij}
  \GG_n^{i,j} := \left\{ \kb \in \GG_n:  \tfrac{\kb}{4n} \in \B^{i,j}  \right\}
 \end{align}
The set $\GG_n^{i,j}$ describes those points $\jb$ in $\GG_n$ such that
$\frac{\jb}{4n}$ are in $B^{i,j}$ of $\partial \Omega_B$. It is easy to see that
$\GG_n^{i,j} \cap \GG_n^{k,l} = \emptyset$ if $i\ne k, j \ne l$ and
\begin{align*}
\bigcup_{0<i,j<i+j\leq 4} \GG_n^{i,j}   = \GG_n \setminus \GG_n^{\circ}.
\end{align*}

\subsection{Interpolation by trigonometric polynomials}
We first apply the general theory from Section 2 to our set up with
$\Omega_B$ as a rhombic dodecahedron.

\begin{thm} \label{thm:interp3D}
For $n \ge 1$ define
\begin{equation}\label{CI_n3d}
    I_n f(x) : = \sum_{ k \in X_n} f(\tfrac{k}{2n}) \Phi_n(x- \tfrac{k}{2n}),
      \qquad
         \Phi_n(x) := \frac{1}{2n^3} \sum_{\nu \in \Lambda_n^\dag} \e_\nu(x).
\end{equation}
Then for each $j \in X_n$, $I_n(\tfrac{j}{2n}) = f(\tfrac{j}{2n})$.
\end{thm}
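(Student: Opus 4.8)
The plan is to recognize Theorem~\ref{thm:interp3D} as the special case of Theorem~\ref{thm:interpolation} attached to our generator matrix $B$, and then to verify it directly by the argument already used for Proposition~\ref{prop:3.4}. First I would use the bijection \eqref{j-k3}: putting $k = 2nB^{-\tr}j$ identifies $X_n$ with $\Lambda_n$ and $f(\tfrac{k}{2n}) = f(B^{-\tr}j)$, so that
$$
   I_n f(x) = \sum_{j \in \Lambda_n} f(B^{-\tr}j)\,\Phi_n\bigl(x - B^{-\tr}j\bigr).
$$
Here $\Phi_n$ coincides with the kernel $\Psi_{\Omega_B}$ of \eqref{ell}, since $|\det B| = 2n^3$ and the sum in \eqref{CI_n3d} runs over $\Lambda_n^\dag = \Lambda_B^\dag$. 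Consequently $I_n f = \CI_B f$, and the assertion $I_n f(\tfrac{j}{2n}) = f(\tfrac{j}{2n})$ for $j \in X_n$ is precisely the interpolation property of $\CI_B f$ supplied by Theorem~\ref{thm:interpolation}.

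To keep the proof self-contained I would instead verify, exactly as in Proposition~\ref{prop:3.4}, that $\Phi_n\bigl(B^{-\tr}(j-k)\bigr) = \delta_{k,j}$ for all $j,k \in \Lambda_n$. Expanding the definition,
$$
  \Phi_n\bigl(B^{-\tr}(j-k)\bigr) = \frac{1}{2n^3}\sum_{\nu \in \Lambda_n^\dag}
     \e^{2\pi i\, \nu^\tr B^{-\tr}(j-k)},
$$
and the orthogonality relation \eqref{d-ortho2} (the sign of the exponent being immaterial, since $\Lambda_n^\dag$ is a complete set of residues of $\ZZ^3$ modulo $B^\tr\ZZ^3$) shows this equals $1$ when $j - k \equiv 0 \bmod B^\tr$ and $0$ otherwise. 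Since $B$ is symmetric, $B^\tr = B$; and if $j \equiv k \bmod B$, then $B^{-\tr}(j-k) = B^{-1}(j-k) \in \ZZ^3$ while $B^{-\tr}j,\, B^{-\tr}k \in \Omega = [-\tfrac12,\tfrac12)^3$, so $B^{-\tr}(j-k) \in (-1,1)^3 \cap \ZZ^3 = \{0\}$ and $j = k$. This yields $\Phi_n(B^{-\tr}(j-k)) = \delta_{k,j}$, hence $I_n f(B^{-\tr}j) = f(B^{-\tr}j)$ for every $j \in \Lambda_n$, which is the claim.

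I do not expect any genuine obstacle: the computation is formally identical to the two-dimensional Proposition~\ref{prop:3.4}. The one place where $d = 3$ differs, and the only point needing care, is that here $\Lambda_n \neq \Lambda_n^\dag$ (in Section~3 they coincide), so one must keep the node index set $\Lambda_n$ separate from the frequency set $\Lambda_n^\dag$ appearing in $\Phi_n$ when invoking \eqref{d-ortho2}; the symmetry $B^\tr = B$ then makes the congruence bookkeeping transparent. As a byproduct, since $\Phi_n \in \CT_n$ and $|X_n| = |\Lambda_n| = 2n^3 = \dim\CT_n$, the same argument shows $I_n f$ is the unique element of $\CT_n$ interpolating $f$ on $\{\tfrac{k}{2n} : k \in X_n\}$, although only the interpolation property is claimed in the statement.
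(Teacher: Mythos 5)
Your proposal is correct and follows the paper's own route: the paper likewise rewrites $I_nf$ via \eqref{j-k3} as $\sum_{j\in\Lambda_n}f(B^{-\tr}j)\Phi_n(x-B^{-\tr}j)$ and concludes that the theorem is a special case of Theorem~\ref{thm:interpolation}. Your additional self-contained verification of $\Phi_n(B^{-\tr}(j-k))=\delta_{k,j}$ via \eqref{d-ortho2}, using $B^\tr=B$ and $B^{-\tr}(j-k)\in(-1,1)^3\cap\ZZ^3=\{0\}$, is a correct (and welcome) elaboration of the same argument, mirroring the proof of Proposition~\ref{prop:3.4}.
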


\begin{proof}
By \eqref{j-k3}, $I_n f(\frac{j}{2n}) = f (\frac{j}{2n})$ for $j \in X_n$ is equivalent
to $I_n f(B^{-\tr}l ) = f(B^{-\tr} l)$ for $l \in \Lambda_n$. Moreover, $I_n f$ can
be rewritten as
$$
    I_n f(x)  = \sum_{ j \in \Lambda_n} f(B^{-\tr} j) \Phi_n(x- B^{-\tr} j).
$$
Hence, this theorem is a special case of Theorem \ref{thm:interpolation}.
\end{proof}

%\begin{proof}
%Notice that $j^\tr B^{-1} k = k^\tr B^{-\tr} j$ so that for $j, k \in \Lambda_n^\dag$,
%$$
%  \Phi_n(B^{-1} j- B^{-1} k) =  \frac{1}{2n^3} \sum_{\nu \in \Lambda_n}
%     \e_\nu(B^{-1}(j-k)) = \frac{1}{2n^3} \sum_{\nu \in \Lambda_n}
%         \e_{j-k} (B^{-\tr} \nu) = \la \e_j,\e_k \ra_n
%$$
%by the definition of $\la f, g\ra_n$ in Theorem \ref{inner1-d3} and \eqref{j-k3}.
%Consequently, by Theorem \ref{inner1-d3}, $\Phi_n(B^{-1} j- B^{-1} k) = \delta_{k,j}$
%which is equivalent to $I_n(B^{-1} j) = f(B^{-1}j)$.
%\end{proof}

Next we consider interpolation on the symmetric set of points $X_n^*$. For
this we need to modify the kernel function $\Phi_n$. Recall that, under the
change of variables \eqref{eq:j-k}, $\Lambda_n^{\dag *}$ becomes
$\GG_n$ in homogeneous coordinates. We define
$$
   \Phi_n^*(x) :=  \frac{1}{2n^3} \sum_{\nu \in \Lambda_n^{\dag *}}
            \wt \mu_\nu^{(n)}  \e_\nu(x)
             = \frac{1}{2n^3} \sum_{\jb \in \GG_n}
            \mu_\jb^{(n)}  \e_\jb(\tb),
$$
where $x$ and $\tb$ are related by \eqref{coordinate1}, $\wt \mu_k^{(n)}$
is defined by $\mu_k^{(n)}$ under the change of indices \eqref{eq:j-k}, and
$\mu_{\jb}^{(n)}=1$ if $ \jb \in \GG_n^{\circ}$,
$\mu_{\jb}^{(n)}=\frac{1}{\binom{i+j}{i}}$ if $ \jb \in \GG_n^{i,j}$;  more
explicitly
\begin{align*}
    \mu_{\jb}^{(n)} :=\begin{cases} 1, & \jb \in \GG_n^{\circ} \\
           \frac12, & \jb \in \GG_n^{1,1}, \\
       \frac13, & \jb \in \GG_n^{1,2}\cup \GG_n^{2,1}, \\
       \frac14, & \jb \in \GG_n^{1,3}\cup \GG_n^{3,1}, \\
       \frac16, & \jb \in \GG_n^{2,2}.
\end{cases}
\end{align*}

For each $k$ on the boundary of $X_n^*$, that is, $\frac{k}{2n}$ on the boundary
of $[-\frac12, \frac12]^3$, let
\begin{equation}\label{CSk}
  \CS_k : =\{j \in X_n^*:  \tfrac{j}{2n} \equiv  \tfrac{k}{2n} \mod{\ZZ^3} \},
\end{equation}
which contains the points on the boundary of $X_n^*$ that are congruent to $k$
under integer translations.

\begin{thm} \label{thm:3dInterp}
For $n \ge 1$ define
\begin{equation}\label{I_n3d}
    I_n^* f(x) : = \sum_{ k \in X_n^*} f(\tfrac{k}{2n}) R_k(x), \qquad
      R_k(x) :=   \Phi_n^*(x- \tfrac{k}{2n}).
\end{equation}
Then for each $j \in X_n^*$,
\begin{align}    \label{eq:sym-interp}
        I^*_n f(\tfrac{j}{2n}) = \begin{cases}
         f(\frac{j}{2n}), & j \in X^{\circ}_n,\\
                  \\
         \displaystyle \sum_{k \in S_{j}}  f(\tfrac{k}{2n}), & j \in X_n^*
                 \setminus X_n^{\circ}.
\end{cases}
\end{align}
In homogeneous coordinates, the function $\Phi_n^*(x) =\wt \Phi_n^*(\tb)$ is
a real function and it satisfies
\begin{align} \label{eq:Phi-n*}
 \wt \Phi_n^*(\tb) = & \frac{1}{4n^3} \left[ \frac{1}{2}
         \left( D_n^*(\tb) + D_{n-1}^*(\tb) \right) -  \frac{1}{3}
         \sum_{\nu=1}^4  \frac{\sin 2 \pi  \lfloor \frac{n-1}{2} \rfloor  t_\nu}
            {\sin 2 \pi t_\nu}    \sum_{\substack{j=1 \\ j \ne \nu}}^4
                \cos 2 \pi (n t_j + \lfloor \tfrac{n}{2} \rfloor t_\nu) \right. \notag \\
    & \qquad \left. \qquad  -
       \frac13   \sum_{1 \le \mu<\nu \le 4}  \cos 2\pi  n (t_{\mu} + t_{\nu})
       - \frac{1}{2} \begin{cases} \sum_{j=1}^4 \cos 2 \pi n t_j, & \hbox{if $n$ even} \\
         0  & \hbox{if $n$ odd} \end{cases} \right],
\end{align}
from which the formula for $\Phi_n^*(x)$ follows from \eqref{coordinate1}
and \eqref{Dn(x)}.
\end{thm}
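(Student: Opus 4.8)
The theorem splits into two essentially independent assertions, and I would handle them in turn: the interpolation identity \eqref{eq:sym-interp}, which is the three–dimensional analogue of \lemref{prop:3.5}, and the compact formula \eqref{eq:Phi-n*}, which is the analogue of \lemref{lem:D_n} and rests on the boundary analysis of \S4.4 together with the already established compact form of $D_n^*$.

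\medskip\noindent\emph{The interpolation identity.} Since $I_n^*f(\tfrac{j}{2n})=\sum_{k\in X_n^*}f(\tfrac{k}{2n})\,\Phi_n^*\bigl(\tfrac{j-k}{2n}\bigr)$, it suffices to show that for $j,k\in X_n^*$ one has $\Phi_n^*\bigl(\tfrac{j-k}{2n}\bigr)=1$ if $j\equiv k\mod 2n\ZZ^3$ and $0$ otherwise; \eqref{eq:sym-interp} then follows because for $j\in X_n^\circ$ the only $k\in X_n^*$ congruent to $j$ modulo $2n\ZZ^3$ is $j$ itself, while for boundary $j$ this congruence class inside $X_n^*$ is exactly $\CS_j$. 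To evaluate $\Phi_n^*$ at $\tfrac m{2n}$ with $m=j-k$, I would use that every point of $X_n^*$ has all three coordinates of one parity, hence so does $m$; since $B$ is symmetric with integer entries, $\ell:=\tfrac1{2n}Bm\in\ZZ^3$ and $\tfrac m{2n}=B^{-\tr}\ell$, so $\e_\nu(\tfrac m{2n})=\e^{2\pi i\,\nu^{\tr}B^{-\tr}\ell}$ depends only on $\nu\mod B$. The crucial step is the reduction
\begin{equation*}
  \sum_{\nu\in\Lambda_n^{\dag*}}\wt\mu_\nu^{(n)}\,g(\nu)=\sum_{\nu\in\Lambda_n^{\dag}}g(\nu),
\end{equation*}
valid for any $g$ that is constant on congruence classes modulo $B$: this is exactly the combinatorics of \S4.4 (and \cite{LX}), namely that each boundary point–orbit $[\B_{I,J}]$ consists of $\binom{|I|+|J|}{|I|}$ lattice points, mutually congruent modulo $B$, of which precisely one lies in the half–open fundamental domain $\Omega_B$, so that the weights $\wt\mu_\nu^{(n)}=\binom{|I|+|J|}{|I|}^{-1}$ on $\GG_n^{i,j}$ collapse the weighted sum over $\Lambda_n^{\dag*}$ to the plain sum over $\Lambda_n^{\dag}$. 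Applying this with $g(\nu)=\e^{2\pi i\,\nu^{\tr}B^{-\tr}\ell}$ and then \eqref{d-ortho2} (using $B^{\tr}=B$) gives the stated value: $\Phi_n^*(\tfrac m{2n})=1$ exactly when $\ell\in B\ZZ^3$, i.e. $m\in2n\ZZ^3$. Reality of $\Phi_n^*$ is immediate from central symmetry of $\Lambda_n^{\dag*}$ and $\wt\mu_\nu^{(n)}=\wt\mu_{-\nu}^{(n)}$, negation interchanging $\GG_n^{i,j}$ with $\GG_n^{j,i}$, which carry the same weight.

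\medskip\noindent\emph{The compact formula.} In homogeneous coordinates I would decompose $\GG_n=\GG_n^\circ\cup\bigcup_{0<i,j<i+j\le4}\GG_n^{i,j}$. Because the indices in $\HH$ satisfy $j_\nu\equiv j_\mu\mod4$, the strict inequalities defining $\GG_n^\circ$ force $|j_\nu-j_\mu|\le4(n-1)$, so $\GG_n^\circ=\GG_{n-1}$ and hence $\sum_{\jb\in\GG_n^\circ}\e_\jb(\tb)=D_{n-1}^*(\tb)$, while $\sum_{\jb\in\GG_n}\e_\jb(\tb)=D_n^*(\tb)$. Writing $\wt\mu_\jb^{(n)}=\tfrac12+\bigl(\binom{i+j}{i}^{-1}-\tfrac12\bigr)$ on $\GG_n^{i,j}$, the weighted sum equals $\tfrac12\bigl(D_n^*+D_{n-1}^*\bigr)$ plus a correction carried by the boundary strata, with coefficients $0$ on $\GG_n^{1,1}$, $-\tfrac16$ on $\GG_n^{1,2}\cup\GG_n^{2,1}$, $-\tfrac14$ on $\GG_n^{1,3}\cup\GG_n^{3,1}$, and $-\tfrac13$ on $\GG_n^{2,2}$. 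Each remaining partial sum $\sum_{\jb\in\GG_n^{i,j}}\e_\jb$ is then computed from the explicit parametrizations of the faces, edges and vertices of the rhombic dodecahedron in \S4.4, using $\tb\in\RR_H^4$: the vertex sums are finite and collapse to $\sum_{\nu}\cos2\pi n t_\nu$ — present only when $n$ is even, since for odd $n$ the vertices of $\B^{1,3}$ and $\B^{3,1}$ scale to odd lattice points outside $\GG_n$ — and to $\sum_{1\le\mu<\nu\le4}\cos2\pi n(t_\mu+t_\nu)$; the edge sums over $\GG_n^{1,2}$ and $\GG_n^{2,1}$ are geometric series along a one–parameter edge subject to the parity constraint, precisely the mechanism that produced $D_n^O$, $D_n^E$ and the floor functions in the proof of the formula for $D_n^*$, and after summing and symmetrizing over the four coordinates (pairing $\GG_n^{1,2}$ with its antipode $\GG_n^{2,1}$, which supplies the factor $2$) they yield $\sum_{\nu}\tfrac{\sin2\pi\lfloor\frac{n-1}2\rfloor t_\nu}{\sin2\pi t_\nu}\sum_{j\ne\nu}\cos2\pi(n t_j+\lfloor\frac n2\rfloor t_\nu)$. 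Collecting these pieces and normalizing gives \eqref{eq:Phi-n*}, and the expression for $\Phi_n^*(x)$ in the original variables follows from \eqref{coordinate1} and \eqref{Dn(x)}.

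\medskip\noindent\emph{Main obstacle.} The delicate part is the edge sums over $\GG_n^{1,2}$ and $\GG_n^{2,1}$: for each of the four orbit types one must identify the distinguished homogeneous coordinate, pin down the exact range and parity of the integer running along the edge (this is what produces $\lfloor\tfrac{n-1}2\rfloor$, $\lfloor\tfrac n2\rfloor$ and the even/odd dichotomy), sum the geometric series, and symmetrize under the $S_4$–action while keeping the twelve edges lying in $\B^{1,2}$ separate from the twelve in $\B^{2,1}$. The vertex strata are comparatively routine, and the reduction used in the interpolation part is a direct appeal to the boundary structure of \S4.4.
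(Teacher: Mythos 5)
Your proposal is correct and follows essentially the same route as the paper: the interpolation identity is reduced to the delta property of $\Phi_n^*$ by collapsing the weighted sum over $\Lambda_n^{\dag*}$ to the plain sum over $\Lambda_n^\dag$ (each boundary orbit of size $\binom{|I|+|J|}{|I|}$ meeting the half-open fundamental domain exactly once) and then invoking \eqref{d-ortho2}, while the compact formula comes from the same stratification $\GG_n = \GG_n^\circ \cup \bigcup \GG_n^{i,j}$ with correction coefficients $0, -\tfrac16, -\tfrac14, -\tfrac13$ and the explicit edge/vertex parametrizations of \S4.4. Your observation that $\GG_n^\circ = \GG_{n-1}$ (forced by $j_\nu - j_\mu \equiv 0 \bmod 4$) and the parity argument showing $\tfrac{j-k}{2n} = B^{-\tr}\ell$ with $\ell \in \ZZ^3$ make explicit two steps the paper defers to \cite{LX}, but the underlying argument is identical.
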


\begin{proof}
By \eqref{j-k3}, we need to verify the interpolation at the points
$B^{-\tr}l$ for $l \in \Lambda_n^*$. By definition, we can write
$$
  R_k (B^{-\tr} l) =       \frac{1}{2n^3} \sum_{\nu \in \Lambda_n^{\dag *}}
           \wt \mu_\nu^{(n)}  \e_\nu(B^{-\tr}(l-k) ).
$$
It is easy to see that $\nu^\tr B^{-\tr}l =\frac{1}{4n}( j_1l_1+j_2l_2+j_3l_3)$
if $\nu$ is related to $\jb$ by \eqref{eq:j-k}. Hence, as in the proof of
Theorem 3.15 in \cite{LX}, we conclude that
$$
 R_k (B^{-\tr} l)  =
       \frac{1}{2n^3} \sum_{\nu \in \Lambda_n^{\dag}} \e_\nu(B^{-\tr}(l-k) ),
$$
Now,  for $l, k \in \Lambda_n^*$, there exist $p \in \Lambda_n$ and $q \in \ZZ^3$
such that $l-k \equiv p \pm B^\tr q$. Consequently, it follows
from \eqref{d-ortho2} that
$$
R_k(B^{-\tr} l)= \frac{1}{2n^3} \sum_{\nu \in \Lambda_n^{\dag}} \e_\nu(B^{-\tr} p)
  = \delta_{p,0}.
$$
By \eqref{j-k3}, we have verified that
\begin{equation} \label{eq:Rk3D}
   R_k (\tfrac{j}{2n}) =  \begin{cases} 1, & \frac{j}{2n} \equiv \frac{k}{2n}
     \mod \ZZ^3,\\
                 0, & \hbox{otherwise},
\end{cases}
\end{equation}
which proves the interpolation part of the theorem.

In order to prove the compact formula, we start with the following formula
that can be established exactly as in the proof of Theorem 3.15 in \cite{LX}:
\begin{align} \label{eq:Phi*}
  \wt \Phi^*_n(\tb) = \frac{1}{4n^3}\bigg[&\frac12(D_n^*(\tb)+D_{n-1}^*(\tb))
   - \frac16 \sum_{k\in \GG_n^{1,2}\cup \GG_n^{2,1}} \phi_\kb(\tb)\\
   &- \frac14 \sum_{k\in \GG_n^{1,3}\cup \GG_n^{3,1}} \phi_\kb(\tb)
   - \frac1{3} \sum_{k\in \GG_n^{2,2} } \phi_\kb(\tb)
    \bigg].\notag
\end{align}
Let us define $\GG_n^{I,J}: = \{\kb \in \GG_n: \tfrac{\kb}{4n} \in \B_{I,J}\}$
for $I, J \subset \NN_4$ and also define $\left[\GG_n^{I,J}\right]: =
\{\kb \in \GG_n: \tfrac{\kb}{4n} \in [\B_{I,J}]\}$. It follows from \eqref{eq:Bij=[B]},
and \eqref{eq:Hn^ij} that
$$
\GG_n^{i,j} = \bigcup_{I,J\in \CK_0^{i,j}} \left[\GG_n^{I,J} \right]
\qquad\hbox{and}\qquad
  \left[\GG_n^{I,J} \right] =  \bigcup_{\sigma\in \mathcal{G}_{I\cup J}}
     \GG_n^{I,J}\sigma.
$$
In order to compute the sums in \eqref{eq:Phi*}, we need to use the
detail description of the boundary elements of $\Omega_B$ in the
previous subsection. The computation is parallel to the proof of
Theorem 3.15 in \cite{LX}, in which the similar computation with
$\GG_n$ replaced by $\HH_n$ is carried out. Thus, we shall be brief.

Using $t_1+t_2+t_3+t_4 =0$ and the explicit description of
$\B^{\{1\},\{2,3\}}$, we get
\begin{align*}
 &  \sum_{\kb \in [\GG_n^{\{1\},\{2,3\}}]} \phi_\kb(\tb)   =
       \sum_{\kb \in \GG_n^{\{1\},\{2,3\}}} \e^{\frac{\pi i}{2}\kb\cdot \tb} +
       \sum_{\kb \in \GG_n^{\{2\},\{1,3\}}} \e^{\frac{\pi i}{2}\kb\cdot \tb} +
        \sum_{\kb \in \GG_n^{\{3\},\{1,2\}}} \e^{\frac{\pi i}{2}\kb\cdot \tb} \\
    & \qquad =    \sum_{j=1, j \mathrm{even}}^{n-1} e^{-2 \pi i j t_4}  \left(
      e^{2 n \pi i (t_1+t_4)} + e^{2 n \pi i (t_2+t_4)} +e^{2 n \pi i (t_3+t_4)} \right) \\
    & \qquad
    = \frac{\sin 2 \pi \lfloor \frac{n-1}{2} \rfloor t_4}{\sin 2 \pi t_4}
   \e^{- 2\pi i \lfloor \frac{n+1}{2} \rfloor  t_4} \left(e^{2  \pi i n ( t_1+t_4)} +
          e^{2 \pi i n( t_2+t_4)}+ e^{2 \pi i n( t_3+t_4)} \right),
\end{align*}
Similarly, we also have
\begin{align*}
   &  \sum_{\kb \in [\GG_n^{\{1,2\},\{3\}}]} \phi_\kb(\tb)   =
       \sum_{\kb \in \GG_n^{\{1\},\{2,3\}}} \e^{\frac{\pi i}{2}\kb\cdot \tb} +
       \sum_{\kb \in \GG_n^{\{2\},\{1,3\}}} \e^{\frac{\pi i}{2}\kb\cdot \tb} +
        \sum_{\kb \in \GG_n^{\{3\},\{1,2\}}} \e^{\frac{\pi i}{2}\kb\cdot \tb} \\
   & \qquad
    = \frac{\sin 2 \pi \lfloor \frac{n-1}{2} \rfloor t_4}{\sin 2 \pi t_4}
   \e^{ 2\pi i \lfloor \frac{n+1}{2} \rfloor  t_4} \left(e^{-2 \pi i n ( t_1+t_4)} +
          e^{-2 \pi i n( t_2+t_4)}+ e^{-2 \pi i n( t_3+t_4)} \right).
\end{align*}
From these and their permutations, we can compute the sum over
$\GG_n^{1,2}$ and $\GG_n^{2,1}$. Putting them together, we obtain
\begin{align*}
   \sum_{\kb\in \GG_n^{1,2}\cup \GG_n^{2,1}} \phi_k(\tb)  =
       2 \sum_{\nu=1}^4  \frac{\sin 2 \pi  \lfloor \frac{n-1}{2} \rfloor  t_\nu}
            {\sin 2 \pi t_\nu}    \sum_{\substack{j=1 \\ j \ne \nu}}^4
                \cos 2 \pi (n t_j + \lfloor \tfrac{n}{2} \rfloor t_\nu).
\end{align*}
Using \eqref{eq:B13}, we see that, $\GG_n^{2,2} = \{(2n,2n,-2n,-2n)\sigma:
\sigma \in \CG\}$ and, if $n$ is even then
$\GG_n^{1,3} = \{(n,n,n,-3n)\sigma: \sigma \in \CG\}$ and
$\GG_n^{3,1} = \{(3n,-n,-n,-n)\sigma: \sigma \in \CG\}$, whereas if $n$ is odd,
then $\GG_n^{1.3} = \GG_n^{3,1} =\emptyset$. As a result, it follows that
\begin{align*}
 \sum_{\kb\in \GG^{2,2}_n} \phi_\kb(\tb)
  =  \sum_{1 \le \mu<\nu \le 4}  e^{2\pi i n (t_{\mu} + t_{\nu})}
  =   \sum_{1 \le \mu<\nu \le 4}  \cos 2\pi  n (t_{\mu} + t_{\nu}),
\end{align*}
where we have used the fact that $t_1+t_2+t_3+t_4 =0$, and
\begin{align*}
   \sum_{\kb\in \GG^{1,3}_n\cup \GG^{3,1}_n} \phi_k(\tb) =  \sum_{j=1}^4
   \big( e^{2\pi i n t_j} + e^{-2\pi i n t_j} \big)
   =  2 \sum_{j=1}^4 \cos 2 \pi  n t_j,
\end{align*}
if $n$ is even, whereas it  is equal to $0$ if $n$ is odd.

Putting all these into \eqref{eq:Phi*} completes the proof.
\end{proof}

\begin{thm}\label{LebesgueH}
Let $\|I_n^*\|_\infty$ denote the norm of the operator $I_n^*: C([-\tfrac12, \tfrac12]^3)
 \mapsto C([-\tfrac12, \tfrac12]^3)$. Then there is a constant $c$, independent of
$n$, such that
$$
       \|I_n^*\|_\infty  \le c (\log n)^3.
$$
\end{thm}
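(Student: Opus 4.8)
I would prove this by reducing the operator norm to the Lebesgue function and then estimating the latter via the explicit compact formula for $\Phi_n^*$.

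First I would observe that since $\Phi_n^*$ is a real trigonometric polynomial (hence periodic with respect to $\ZZ^3$), and since the $|X_n^*| = n^3+(n+1)^3 = \CO(n^3)$ points $\{\tfrac{k}{2n}: k \in X_n^*\}$ are distinct in $[-\tfrac12,\tfrac12]^3$, the triangle inequality gives
$$
  \|I_n^*\|_\infty \le \max_{x \in [-\frac12,\frac12]^3} \Lambda_n(x),
  \qquad \Lambda_n(x) := \sum_{k \in X_n^*} \bigl| \Phi_n^*(x- \tfrac{k}{2n}) \bigr|
$$
(in fact equality holds, by choosing a continuous $f$ with $f(\tfrac{k}{2n})=\sign R_k(x_0)$ at a maximizer $x_0$, but only $\le$ is needed). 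So the task is to bound $\Lambda_n(x) \le c(\log n)^3$ uniformly in $x$.

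Next I would insert the compact formula \eqref{eq:Phi-n*} for $\wt\Phi_n^*$ together with $D_n^*(\tb)=\Theta_{n+1}(\tb)-\Theta_n(\tb)-\bigl(\Theta_n^{\mathrm{odd}}(\tb)-\Theta_{n-2}^{\mathrm{odd}}(\tb)\bigr)$, working in homogeneous coordinates $\tb \in \RR_H^4$ via \eqref{coordinate1}. Using the telescoping identity $\prod_i a_i - \prod_i b_i = \sum_j (\prod_{i<j}a_i)(a_j-b_j)(\prod_{i>j}b_i)$ on $\Theta_{n+1}-\Theta_n$, and the identity $K_{n+1}(t)-K_n(t)=\cos\pi(n+\tfrac12)t/\cos\tfrac{\pi}{2}t$ where $K_m(t)=\sin(\pi m t)/\sin(\pi t)$, the kernel $\wt\Phi_n^*$ becomes a fixed finite sum of terms, each a product of three one–dimensional Dirichlet-type kernels $K_m$ (with $m\sim n$) in three of the four coordinates times one further "Dirichlet-like" factor in the remaining coordinate. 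The $\Theta^{\mathrm{odd}}$ contributions and the boundary-correction terms in \eqref{eq:Phi-n*} (the ones with $\sum_\nu$ and $\sum_{\mu<\nu}$) contain strictly fewer full-width Dirichlet factors. The core estimate is then a discrete convolution bound: on $\RR_H^4$ one coordinate is dependent, say $t_4=-(t_1+t_2+t_3)$, so with $u_\ell := (\tb-\text{node})_\ell$ and $\sum_\ell u_\ell=0$, the leading part of $\Lambda_n(x)$ has the form
$$
 \frac{C}{n^3}\sum_{\text{nodes}}
   |K_n(u_1)|\,|K_n(u_2)|\,|K_n(u_3)|\,
       \bigl|(K_{n+1}-K_n)(u_1+u_2+u_3)\bigr|
$$
plus permutations. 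After accounting for the congruence constraints defining $\GG_n$ (a bounded multiplicative loss), this reduces to a triple sum over $\CO(n)$ equally spaced samples of each $u_\ell$, to which one applies the classical $\tfrac1n\sum_j|K_n(s-\tfrac{j}{2n})|\le C\log n$ together with $|K_n(u)|\le C\min(n,\|u\|^{-1})$ and $|(K_{n+1}-K_n)(u)|\le C\min(n,\operatorname{dist}(u,\text{odd }\ZZ)^{-1})$. A short case split on whether each $u_\ell$ is in the "peak" regime ($\|u_\ell\|\lesssim 1/n$) or the "tail" regime, and whether $u_1+u_2+u_3$ is near an odd integer, shows that the only case yielding the full $(\log n)^3$ is all three $u_\ell$ in the tail with the fourth argument bounded away from odd integers; forcing a peak or forcing the sum near an odd integer removes a $\log n$ at the cost of only a bounded factor, so every other case, and every $\Theta^{\mathrm{odd}}$ or boundary term, contributes $\CO((\log n)^2)$. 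Summing the finitely many permutation-related pieces gives $\Lambda_n(x)\le c(\log n)^3$.

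The main obstacle is exactly the geometry of $\RR_H^4$: the compact kernel $D_n^*$ is written as a product of four one-dimensional kernels but lives on a three-dimensional subspace, so a naive term-by-term majorization would produce $(\log n)^4$; one must use the relation $\sum_\ell u_\ell=0$ to show that a singularity in the "fourth" direction is slaved to the other three rather than independent of them, which is what the case analysis above encodes. Carrying this through cleanly also requires the detailed description of $\GG_n$ near the boundary of the rhombic dodecahedron (the pieces $\GG_n^{i,j}$, the families $\CS_k$, and the weights $\mu_\nu^{(n)}$ from Theorem \ref{thm:3dInterp}) to confirm that the $\CO(n^2)$ boundary nodes contribute only lower-order terms; but beyond bookkeeping this needs nothing more than the one-dimensional Dirichlet-kernel Lebesgue estimate and elementary summation.
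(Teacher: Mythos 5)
Your first step (reducing $\|I_n^*\|_\infty$ to the Lebesgue function $\sum_{k}|\Phi_n^*(x-\frac{k}{2n})|$) and your last step (the classical bound $\frac1{2n}\sum_k|\sin n\pi(t-\frac{k}{2n})/\sin\pi(t-\frac{k}{2n})|\le C\log n$) are exactly the paper's, and you correctly identify the central danger: $D_n^*$ is a four-factor product living on the three-dimensional space $\RR_H^4$. But your resolution of that danger has a genuine gap. The generic telescoping of $\Theta_{n+1}-\Theta_n$ leaves in each term the factor $(K_{n+1}-K_n)(u_4)=\cos\pi(n+\tfrac12)u_4/\cos\tfrac{\pi}{2}u_4$ (your notation $K_m(t)=\sin \pi m t/\sin\pi t$), whose majorant $\min(2n+1,\operatorname{dist}(u_4,2\ZZ+1)^{-1})$ has peaks of height $2n+1$. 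Your claim that restricting $u_1+u_2+u_3$ to a neighborhood of an odd integer ``removes a $\log n$ at the cost of only a bounded factor'' is quantitatively false, because the four singularities are not in general position: if each $u_i$ ($i=1,2,3$) lies within $O(1/n)$ of an integer $m_i$ with $m_1+m_2+m_3$ odd (such configurations occur, since the $u_i$ range over intervals of length about $3$), then $u_4=-(u_1+u_2+u_3)$ is automatically within $O(1/n)$ of an odd integer, so all four factors are large \emph{simultaneously} rather than trading off. Concretely, writing $u_i=m_i+\tfrac1{2n}+\tfrac{j_i}{n}$, the nodes on the slice $j_1+j_2+j_3=c$ that minimizes $\operatorname{dist}(u_4,2\ZZ+1)$ contribute about
$$
\frac{1}{n^3}\sum_{j_1+j_2+j_3=c}\prod_{i=1}^3\frac{n}{|j_i|+1}\cdot n
= n\sum_{j_1+j_2+j_3=c}\prod_{i=1}^3\frac{1}{|j_i|+1}=O(n)
$$
to your majorant, not $O((\log n)^2)$. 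The ``thin slab'' heuristic fails here because the triple Dirichlet product concentrates its mass near its peaks, and the offending slab passes through a peak. So a term-by-term absolute-value estimate of your decomposition cannot give $(\log n)^3$; whatever saves the day must act before absolute values are taken.

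This is precisely where the paper takes a different (and essential) turn: it invokes the identity (3.19) of \cite{LX}, which uses $t_1+t_2+t_3+t_4=0$ at the algebraic level to rewrite $\Theta_{n+1}-\Theta_n$ (and similarly the $\Theta^{\mathrm{odd}}$ differences) as a sum of four terms, each a product of exactly three Dirichlet ratios in three of the four homogeneous coordinates, with \emph{no} residual factor in the fourth. The resulting sums $I_{\{1,2,3\}},\dots,I_{\{2,3,4\}}$ then factor, after reindexing and enlarging $X_n^*$, into cubes of one-dimensional Lebesgue sums, giving $(\log n)^3$ directly. To repair your argument you would need to import that identity (or an equivalent exact rearrangement exploiting the constraint inside the trigonometric identity itself); the product-rule telescoping followed by a size case analysis is not sufficient.
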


\begin{proof}
Following the standard procedure, we see that
$$
   \|I_n^*\|_\infty = \max_{x \in [-\frac12,\frac12]^3}   \sum_{k \in X_n^*}
         \left|\Phi_n^*(x -\tfrac{k}{4n})\right|.
$$
Using the formula of $\Phi_n^*$ in \eqref{eq:Phi-n*}, it is
easy to see that it suffices to prove that
$$
   \max_{x\in [-\frac12,\frac12]^3}  \sum_{k \in X_n^*}
         \left|D_n^*(x -\tfrac{k}{2n})\right| \le c (\log n)^3, \qquad n \ge 0.
$$
Furthermore, using the explicit formula of $D_n^{\mathrm{odd}}(\tb)$ and
(3.19) in \cite{LX}, we see that our main task is to estimate the sums in the
form of
$$
I_{\{1,2,3\}}:= \frac{1}{2n^3} \max_{\tb \in Q}   \sum_{k \in X_n^*}
   \left| \frac{\sin \pi n (t_1- \frac{k_2+k_3}{2n})\sin \pi n (t_2- \frac{k_1+k_3}{2n})
   \sin \pi n (t_3- \frac{k_1+k_2}{2n})}{\sin \pi (t_1- \frac{k_2+k_3}{2n})
      \sin \pi (t_2- \frac{k_1+k_3}{2n})
   \sin \pi (t_3- \frac{k_1+k_2}{2n})} \right|
$$
and three other similar estimates $I_{\{1,2,4\}}$, $I_{\{1,3,4\}}$ and $I_{\{2,3,4\}}$, respectively, as well as similar sums in which the denominator becomes
product of $\sin2 \pi (t_i-\frac{k_i}{2n})$ and $n$ in the numerator is replace
by $n+1$ or $n+2$. Here $Q$ is the image of $[-1,1]^3$ under the mapping
\eqref{coordinate1}; that is,
$$
Q= \{\tb \in \RR_H^4: -\tfrac12 \le t_1+t_2, t_2+t_3, t_3 + t_1 \le \tfrac12\}.
$$
Changing the summation indices and enlarging the set $X_n^*$, we see that
\begin{align*}
   I_{\{1,2,3\}} & \le  4  \max_{t \in {[-1,1]}} \left (\frac{1}{2n}
        \sum_{k = 0}^{2 n} \left | \frac{\sin n \pi (t -\tfrac{k}{2 n})}
          {\sin \pi (t -\tfrac{k}{2 n})} \right|  \right)^3   \le c (\log n)^3,
\end{align*}
where the last step follows from the standard estimate of one variable
(cf. \cite[Vol. II, p. 19]{Z}).
\end{proof}

\subsection{Interpolation by algebraic polynomials} The main outcome
of Theorem \ref{thm:interp3D} in the previous section is that we can
derive a genuine interpolation by trigonometric polynomials based
on the set of points in $\{\frac{k}{2n}:k\in \Xi_n\}$ defined at \eqref{Xi_n}.
The development below is similar to the case of $d =2$. We define
$$
   \CP f(x):= \frac{1}{8}
        \sum_{\eps \in \{-1,1\}^3} f(\eps_1 x_1, \eps_2 x_2,  \eps_3 x_3).
$$

\begin{thm}
For $n \ge 0$ define
$$
  \CL_n f(x) = \sum_{k \in \Xi_n} f(\tfrac{k}{2n}) \ell_k(x), \qquad
      \ell_k(x): = \lambda_k^{(n)} \CP \left[\Phi_n^*(\cdot - \tfrac{k}{2n})\right](x)
$$
with $\l_k^{(n)}$ given in \eqref{cubaT3d}. Then $\CL_n f \in \CT_n$ is even
in each of its variables and it satisfies
$$
  \CL_n f(\tfrac{j}{2n}) = f(\tfrac{j}{2n})  \qquad \hbox{for all} \quad j \in \Xi_n.
$$
\end{thm}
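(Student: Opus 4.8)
The plan is to follow closely the proof of the analogous $d=2$ theorem in Section~3. Three things must be verified: that $\CL_n f$ is even in each variable, that $\CL_n f \in \CT_n$, and that $\CL_n f$ reproduces $f$ on $\{\tfrac{j}{2n}:j\in\Xi_n\}$. Evenness is immediate, since each $\ell_k$ is of the form $\CP[\,\cdot\,]$ and $\CP g$ is, by construction, invariant under every sign change $x_i\mapsto -x_i$. Membership in $\CT_n$ is checked as in the $d=2$ case: $\Phi_n^*\in\CT_n^*$, so $R_k:=\Phi_n^*(\cdot-\tfrac{k}{2n})\in\CT_n^*$, and after applying $\CP$ the frequencies in $\Lambda_n^{\dag*}\setminus\Lambda_n^\dag$ (those sitting on $\partial\Omega_B$) cancel in pairs because the coefficients $\mu_\jb^{(n)}$ entering $\Phi_n^*$ were chosen precisely so that they do --- the same telescoping used in the proof of \thmref{thm:3dInterp}.

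The substance is the cardinality identity $\ell_k(\tfrac{j}{2n})=\d_{k,j}$ for $j,k\in\Xi_n$. By \eqref{eq:Rk3D} one has $R_k(\tfrac{m}{2n})=1$ when $m\equiv k\bmod 2n\ZZ^3$ and $0$ otherwise; since $\Xi_n\subset X_n^*$ and $X_n^*$ is symmetric under sign changes, evaluating at the eight points $\eps j$ ($\eps\in\{-1,1\}^3$, componentwise product) gives
$$
 \ell_k(\tfrac{j}{2n})=\lambda_k^{(n)}(\CP R_k)(\tfrac{j}{2n})
  =\frac{\lambda_k^{(n)}}{8}\,\#\bigl\{\eps\in\{-1,1\}^3:\ \eps j\equiv k \bmod 2n\ZZ^3\bigr\}.
$$
Every coordinate of $j$ and of $k$ lies in $\{0,1,\dots,n\}$, and I would analyze the congruence coordinatewise: if $0<j_i<n$, then $\eps_i j_i\equiv k_i\bmod 2n$ forces $\eps_i=+1$ and $k_i=j_i$ (the alternative would require $k_i=2n-j_i>n$); if $j_i\in\{0,n\}$, then, because $-n\equiv n\bmod 2n$, both signs satisfy the congruence and both force $k_i=j_i$. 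Hence the count vanishes unless $k=j$, in which case it equals $2^{a(j)}$, with $a(j)$ the number of coordinates of $j$ equal to $0$ or $n$. Matching $a(j)$ with the stratification of $\Gamma_n$ used in \thmref{thm:cubaT3d} --- $a(j)=0,1,2,3$ meaning $j\in\Xi_n^\circ,\Xi_n^f,\Xi_n^e,\Xi_n^v$ --- one reads off from \eqref{cubaT3d} that $\lambda_j^{(n)}=2^{3-a(j)}$ in every case, so $\ell_k(\tfrac{j}{2n})=\tfrac{\lambda_j^{(n)}}{8}\,2^{a(j)}\d_{k,j}=\d_{k,j}$; summing over $k$ yields $\CL_n f(\tfrac{j}{2n})=f(\tfrac{j}{2n})$.

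The main obstacle is the congruence bookkeeping in the previous paragraph: one must be careful that ``$j_i\in\{0,n\}$'' is exactly the condition making $\eps_i j_i\equiv k_i\bmod 2n$ independent of $\eps_i$ --- resting on $-n\equiv n\bmod 2n$ and, for $0<j_i<n$, on $2n-j_i>n$ --- and that this coordinate count coincides with the interior/face/edge/vertex classification of $\Gamma_n$ baked into the weights $\lambda_k^{(n)}$. The remaining point deserving care, $\CL_n f\in\CT_n$, reduces as in $d=2$ to verifying that symmetrizing $\Phi_n^*$ creates no frequency outside $\Lambda_n^\dag$; this is routine given the description of $\partial\Omega_B$ recorded earlier in this section together with the decomposition \eqref{eq:Phi*} of $\wt\Phi_n^*$.
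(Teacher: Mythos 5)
Your argument is correct and is essentially the paper's own proof: both rest on \eqref{eq:Rk3D} and then count, for each $j\in\Xi_n$, how many sign patterns $\eps$ satisfy $\eps j\equiv k \bmod 2n\ZZ^3$; your uniform bookkeeping via $a(j)$ (the number of coordinates of $j$ in $\{0,n\}$) and the identity $\lambda_j^{(n)}=2^{3-a(j)}$ is in fact cleaner and more complete than the paper's, which works one representative case and omits the rest. The only inaccurate point is peripheral: the boundary frequencies of $\Phi_n^*$ do not ``cancel in pairs'' under $\CP$ (they persist with the fractional coefficients $\mu_\jb^{(n)}$), so $\CL_n f$ lies in the symmetrized space $\CT_n^*$ rather than literally in $\CT_n$ --- an imprecision already present in the theorem statement and left unaddressed by the paper's proof as well.
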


\begin{proof}
As shown in \eqref{eq:Rk3D}, $R_k(x):=
\Phi_n^*(x- \tfrac{k}{2n})$ satisfies $R_k(\tfrac{j}{2n}) = 1$  when
$k \equiv j \mod 2n \ZZ^3$ and 0 otherwise. Hence, if $j \in \Xi_n^\circ$ then
$(\CP R_k)(\tfrac{j}{2n}) = \frac{1}{8} R_k(\tfrac{j}{2n}) = [\l_k^{(n)}]^{-1} \d_{k,j}$.
If $j \in \Xi_n^*\setminus \Xi_n^\circ$, then we need to consider several
cases, depending on how many components of $\jb$ are zero, which
determines how many distinct terms are in the sum  $(\CP R_k)(\tfrac{j}{2n})$
and how many distinct $k$ can be obtained from $j$ by congruent
in $\ZZ^3$.  For example, if $j \in \Xi_n^f$ and none of the components of
$j$ are zero, then there are 2 elements in $\CS_j$, $j$ and the one in
the opposite face, and the sum $\CP R_k(\tfrac{j}{2n})$ contains 8 terms,
so that $(\CP R_k)(\tfrac{j}{2n}) = \frac{1}{4} \delta_{j,k} = [\lambda_k^{(n)}]^{-1}
\delta_{k,j}$. The other cases can be verified similarly, just as in the case of
$d=2$. We omit the details.
\end{proof}

The above theorem yields immediately interpolation by algebraic polynomials
upon applying the change of variables \eqref{t-x3d}. Recall
$\Gamma_n$ defined in \eqref{Gamma_n3d} and the polynomial subspace
$$
   \Pi_n^*= \mathrm{span} \{s_1^{k_1}s_2^{k_2}s_3^{k_3}: k_1,k_2,k_3 \ge 0,
         k_i+k_j \le n, 1 \le i,j \le 3\}.
$$

\begin{thm}
For $n \ge 0$, let
$$
   \CL_n f (s) = \sum_{z_k \in \Gamma_n} f(z_k) \ell^*_k(s),
         \qquad
         \ell_k^*(s) = \ell_k(x) \quad\hbox{with}\quad  s= \cos 2 \pi x.
$$
Then $\CL_n f \in \Pi_n^*$ and it satisfies $\CL_n f(z_k) = f(z_k)$ for all
$z_k \in \Gamma_n$.
\end{thm}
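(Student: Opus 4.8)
The plan is to deduce this statement from the trigonometric interpolation theorem of the preceding subsection via the substitution $s_i=\cos2\pi x_i$, $i=1,2,3$, exactly as in the case $d=2$. I would fix $f\in C([-1,1]^3)$, put $g(x):=f(\cos2\pi x_1,\cos2\pi x_2,\cos2\pi x_3)$, and record that $g$ is $\ZZ^3$-periodic, even in each variable, and satisfies $g(\tfrac{k}{2n})=f(z_k)$, where $z_k\in\Gamma_n$ is the value at $x=\tfrac{k}{2n}$ of the map $x\mapsto(\cos2\pi x_1,\cos2\pi x_2,\cos2\pi x_3)$. Since $\ell_k^*(s)=\ell_k(x)$ when $s=\cos2\pi x$, and $\lambda_k^{(n)}$ and $\Xi_n$ are common to both settings, this gives $\CL_n f(s)=(\CL_n g)(x)$ whenever $s=\cos2\pi x$, so that every assertion about $\CL_n f$ reduces to the corresponding one about $\CL_n g$ from the previous theorem.

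To show $\CL_n f\in\Pi_n^*$, I would use the fact, from the previous theorem, that $\CL_n g$ is a trigonometric polynomial even in each variable: each $\ell_k=\lambda_k^{(n)}\CP[\Phi_n^*(\cdot-\tfrac{k}{2n})]$ is a linear combination of functions $\CP\e_\mu(x)=\prod_{i=1}^{3}\cos(2\pi\mu_i x_i)$ with $\mu\in\Lambda_n^{\dag *}$, and under $s_i=\cos2\pi x_i$ each such term turns into $\prod_{i=1}^{3}T_{|\mu_i|}(s_i)$. Because $\Lambda_n^{\dag *}$ is invariant under coordinate sign changes, $(|\mu_1|,|\mu_2|,|\mu_3|)\in\Lambda_n^{\dag *}$, whence $|\mu_i|+|\mu_j|\le n$ for $i\ne j$, which is precisely the degree constraint defining $\Pi_n^*$; this is the even-trigonometric-polynomial/$\Pi_n^*$ correspondence already recorded in this section. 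Being even in each variable and $\ZZ^3$-periodic, $\CL_n g$ descends to a single-valued function of $s$ on $[-1,1]^3$, and by the above this function lies in $\Pi_n^*$.

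For the interpolation property I would first check, by a parity count on the definition of $\Xi_n$ in \eqref{Xi_n}, that $0\le k_i\le n$ for all $k\in\Xi_n$; since $\theta\mapsto\cos\theta$ is strictly monotone on $[0,\pi]$, the map $k\mapsto z_k$ is then injective on $\Xi_n$, so $\Gamma_n$ has $|\Xi_n|$ distinct points. Given $z_j\in\Gamma_n$, take $x=\tfrac{j}{2n}$, so that $z_j=\cos2\pi x$ componentwise; then $\CL_n f(z_j)=(\CL_n g)(\tfrac{j}{2n})=g(\tfrac{j}{2n})=f(z_j)$, the middle equality being the interpolation identity $\CL_n g(\tfrac{j}{2n})=g(\tfrac{j}{2n})$, $j\in\Xi_n$, of the previous theorem. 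This completes the argument.

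I expect the only step requiring real care to be the second one: verifying that the frequencies carried by $\ell_k$ — which come from $\Phi_n^*$, hence from $\Lambda_n^{\dag *}$, and are then filtered through the averaging operator $\CP$ — end up, after $s_i=\cos2\pi x_i$, inside the octahedral region $|\mu_i|+|\mu_j|\le n$ that defines $\Pi_n^*$; the decisive point is the sign-symmetry of $\Lambda_n^{\dag *}$, which permits replacing each $\mu_i$ by $|\mu_i|$. The reduction to the trigonometric theorem, the well-definedness of $\CL_n f$ on $[-1,1]^3$, and the distinctness of the nodes $z_k$ are all routine, the first two having essentially been carried out already in the case $d=2$.
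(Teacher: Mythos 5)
Your proposal is correct and follows the same route as the paper, which simply notes that the theorem "follows immediately from the change of variables" \eqref{t-x3d}; you have merely filled in the routine details (the sign-symmetry of $\Lambda_n^{\dag*}$ forcing the frequencies into the region $|\mu_i|+|\mu_j|\le n$, and the distinctness of the nodes $z_k$), all of which check out.
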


This theorem follows immediately from the change of variables
\eqref{t-x3d}. The explicit compact formula of $\ell_k(x)$, thus $\ell^*_k(s)$,
can be derived from Theorem \ref{thm:3dInterp}.

The theorem states that the interpolation space for the point set $\Gamma_n$
is exactly $\Pi_n^*$, which consists of monomials that have indices in
the positive quadrant of the rhombic dodecahedron, as depicted in
Figure 3 below.

\begin{figure}[ht]
\centering
\includegraphics[width=0.5\textwidth]{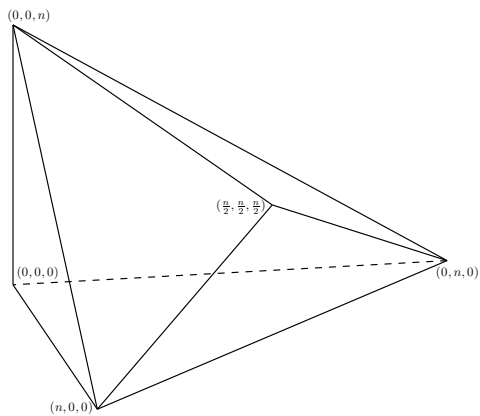}
\caption{Index set of $\Pi_n^*$}
\end{figure}

\noindent
The set $\Gamma_n$ consists of roughly  $n^3/4(1+ \CO(n^{-1})$
points. The interpolation polynomial $\CL_n f \in \Pi_n^*$ is about a total degree
of $3 n/2$. The compact formula of the fundamental interpolation polynomial
provides a convenient way of evaluating the interpolation polynomial. Furthermore,
the Lebesgue constant of this interpolation process remains at the order of
$(\log n)^3$, as the consequence of Theorem \ref{LebesgueH} and the
change of variables.

\begin{cor}\label{Lebesgue}
Let $\| \CL_n\|_\infty$ denote the operator norm of $\CL_n: C([-1, 1]^3)
 \mapsto C([-1, 1]^3)$. Then there is a constant $c$, independent of
$n$, such that
$$
       \|\CL_n\|_\infty  \le c (\log n)^3.
$$
\end{cor}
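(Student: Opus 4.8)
The plan is to deduce the corollary from Theorem~\ref{LebesgueH} by the change of variables $s=\cos 2\pi x$, using the fact that $\CL_n$ is obtained from the operator $I_n^*$ by the symmetrization $\CP$, so that its norm is controlled by $\|I_n^*\|_\infty$ up to an absolute constant. Concretely, the standard estimate for an interpolation operator gives
\[
 \|\CL_n\|_\infty\ \le\ \max_{s\in[-1,1]^3}\sum_{z_k\in\Gamma_n}|\ell_k^*(s)| .
\]
Every $s\in[-1,1]^3$ can be written as $s=\cos 2\pi x$ with $x\in[0,\tfrac12]^3\subset[-\tfrac12,\tfrac12]^3$, and $\ell_k^*(\cos 2\pi x)=\ell_k(x)$; hence it suffices to bound the trigonometric Lebesgue function $\sum_{k\in\Xi_n}|\ell_k(x)|$ uniformly for $x\in[-\tfrac12,\tfrac12]^3$.

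For this I would unfold $\ell_k$. Writing $\eps x=(\eps_1x_1,\eps_2x_2,\eps_3x_3)$ for $\eps\in\{-1,1\}^3$, the definition $\ell_k(x)=\lambda_k^{(n)}\,\CP\bigl[\Phi_n^*(\cdot-\tfrac{k}{2n})\bigr](x)=\tfrac{\lambda_k^{(n)}}{8}\sum_{\eps\in\{-1,1\}^3}\Phi_n^*(\eps x-\tfrac{k}{2n})$ together with the crude bound $\lambda_k^{(n)}\le 8$ (see \eqref{cubaT3d}) yields $|\ell_k(x)|\le\sum_{\eps\in\{-1,1\}^3}\bigl|\Phi_n^*(\eps x-\tfrac{k}{2n})\bigr|$. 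Summing over $k\in\Xi_n$ and interchanging the finite sums,
\begin{align*}
 \sum_{k\in\Xi_n}|\ell_k(x)|
   &\le \sum_{\eps\in\{-1,1\}^3}\ \sum_{k\in\Xi_n}\bigl|\Phi_n^*(\eps x-\tfrac{k}{2n})\bigr|
     \ \le\ \sum_{\eps\in\{-1,1\}^3}\ \sum_{m\in X_n^*}\bigl|\Phi_n^*(\eps x-\tfrac{m}{2n})\bigr| \\
   &\le\ 8\max_{y\in[-\frac12,\frac12]^3}\ \sum_{m\in X_n^*}\bigl|\Phi_n^*(y-\tfrac{m}{2n})\bigr|
     \ =\ 8\,\|I_n^*\|_\infty ,
\end{align*}
where the middle inequality is the elementary inclusion $\Xi_n\subseteq X_n^*$ read off from \eqref{Xi_n}, and the third uses that $\eps x\in[-\tfrac12,\tfrac12]^3$ whenever $x\in[-\tfrac12,\tfrac12]^3$ together with the expression for $\|I_n^*\|_\infty$ recorded in the proof of Theorem~\ref{LebesgueH}.

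Combining the two displays with Theorem~\ref{LebesgueH} gives $\|\CL_n\|_\infty\le 8\,\|I_n^*\|_\infty\le 8c(\log n)^3$, which is the claim after renaming the constant. The argument is essentially bookkeeping: the only points needing care are the set inclusion $\Xi_n\subseteq X_n^*$, the fact that sign changes preserve the cube $[-\tfrac12,\tfrac12]^3$, and the bound $\lambda_k^{(n)}\le 8$, which exactly absorbs the factor $\tfrac18$ coming from $\CP$. There is no genuine new obstacle here, since the one delicate ingredient — the $O((\log n)^3)$ estimate for the Dirichlet-type kernel sum $\max_y\sum_{m\in X_n^*}|\Phi_n^*(y-\tfrac{m}{2n})|$ — is precisely the content of Theorem~\ref{LebesgueH}. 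If one wanted the optimal constant, one could replace $\lambda_k^{(n)}\le 8$ by the exact identity $\tfrac18\,\lambda_{|m|}^{(n)}\,2^{\#\{i:\,m_i=0\}}=c_m^{(n)}$ for $m\in X_n^*$, which holds because $\Phi_n^*$ is even in each variable, but this refinement is not needed for the stated corollary.
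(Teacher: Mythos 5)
Your argument is correct and is exactly the route the paper intends: the paper states the corollary as an immediate consequence of Theorem~\ref{LebesgueH} and the change of variables \eqref{t-x3d}, and your bookkeeping (unfolding $\CP$, $\lambda_k^{(n)}\le 8$, $\Xi_n\subseteq X_n^*$, and sign-invariance of the cube) is precisely the omitted verification. No discrepancy with the paper's approach.
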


 \end{document}